\colorlet{refkey}{orange!20}
\colorlet{labelkey}{blue!30}
\tikzstyle{p}+=[fill=black, circle, minimum width = 1pt, inner sep =
\newtheorem{theorem}{Theorem}[section]
\newtheorem{proposition}[theorem]{Proposition}
\newtheorem{lemma}[theorem]{Lemma}
\theoremstyle{definition}
\newtheorem{definition}[theorem]{Definition}
\theoremstyle{remark}
\newtheorem*{remark}{Remark}
\newtheoremstyle{named}{}{}{\itshape}{}{\bfseries}{.}{.5em}{#3}
\theoremstyle{named}
\newtheorem*{namedtheorem}{}
\newcommand{\abs}[1]{\left\lvert#1\right\rvert}
\newcommand{\sabs}[1]{\lvert#1\rvert}
\newcommand{\norm}[1]{\left\lVert#1\right\rVert}
\newcommand{\snorm}[1]{\lVert#1\rVert}
\newcommand{\ang}[1]{\left\langle #1 \right\rangle}
\newcommand{\sang}[1]{\langle #1 \rangle}
\newcommand{\paren}[1]{\left( #1 \right)}
\newcommand{\wt}{\widetilde}
\renewcommand{\Re}{\operatorname{Re}}
\newcommand{\x}{\times}
\newcommand{\e}{\epsilon}
\newcommand{\EE}{\mathbb{E}}
\newcommand{\RR}{\mathbb{R}}
\newcommand{\NN}{\mathbb{N}}
\newcommand{\ZZ}{\mathbb{Z}}
\newcommand{\tg}{\tilde{g}}
\newcommand{\tf}{\tilde{f}}
\newcommand{\tA}{\widetilde{A}}
\newcommand{\bx}{\mathbf{x}}
\newcommand{\by}{\mathbf{y}}
\title{The Green--Tao theorem: an exposition}
\author{David Conlon}
\address{Mathematical Institute, Oxford OX2 6GG, United Kingdom}
\email{david.conlon@maths.ox.ac.uk}
\thanks{The first author was supported by a Royal Society University
  Research Fellowship.}
\author{Jacob Fox}
\address{Department of Mathematics, MIT, Cambridge, MA 02139-4307}
\email{fox@math.mit.edu}
\thanks{The second author was supported by a Packard
  Fellowship, NSF Career Award DMS-1352121, an Alfred P. Sloan Fellowship, and an
  MIT NEC Corporation Award.}
\author{Yufei Zhao}
\address{Department of Mathematics, MIT, Cambridge, MA 02139-4307}
\email{yufeiz@math.mit.edu}
\thanks{The third author was supported by a
  Microsoft Research PhD Fellowship.}
\begin{document}

\begin{abstract}
  The celebrated Green-Tao theorem states that the prime numbers
  contain arbitrarily long arithmetic progressions. We give an
  exposition of the proof, incorporating several simplifications that
  have been discovered since the original paper.
\end{abstract}

\maketitle

\setcounter{tocdepth}{1}

\section{Introduction} \label{sec:intro}

In 2004, Ben Green and Terence Tao \cite{GT08} proved the following celebrated theorem, resolving a folklore conjecture about prime numbers.

\begin{theorem}[Green-Tao] \label{thm:gt}
The prime numbers contain arbitrarily long arithmetic progressions.
\end{theorem}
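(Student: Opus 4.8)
The plan is to reduce to \emph{Szemerédi's theorem} — every subset of $\{1,\dots,N\}$ of density at least $\delta$ contains a $k$-term arithmetic progression once $N\ge N_0(\delta,k)$ — but since the primes have density only $\asymp 1/\log N$, this cannot be invoked directly. Instead I would aim for a \emph{relative} Szemerédi theorem: any set that has positive density \emph{inside} a sufficiently pseudorandom ambient set of integers still contains arbitrarily long progressions. The two pillars are then (i) this transference statement, and (ii) the construction, via sieve theory, of a pseudorandom measure that majorizes the primes with positive relative density.

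First I would pass to a finitary setting: fix a large prime $N$, work in $\ZZ_N=\ZZ/N\ZZ$, and count progressions among the primes in a sub-interval such as $[1,N/2]$, arranging matters so that a $k$-AP found modulo $N$ lifts to a genuine one in $\ZZ$. To remove the local obstructions coming from small primes (e.g.\ the primes avoid every even residue), I would apply the \emph{$W$-trick}: put $W=\prod_{p\le w}p$ for a sufficiently slowly growing $w=w(N)\to\infty$, fix $b$ coprime to $W$, and study the rescaled counting function $f(n)\asymp \frac{\varphi(W)}{W}\log(Wn+b)\cdot\one[Wn+b\text{ prime}]$. By the prime number theorem in arithmetic progressions, averaging over admissible $b$ forces $\EE f\gg 1$ uniformly in $N$; this is the positive relative density we need, and any long progression detected in $f$ lifts to one in the primes.

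The analytic heart is the \emph{relative Szemerédi theorem}: if $\nu\colon\ZZ_N\to\RR_{\ge 0}$ is pseudorandom (it satisfies a \emph{linear forms condition}, and in the original argument also a correlation condition) and $0\le f\le\nu$ with $\EE f\ge\delta$, then the normalized count of $k$-APs weighted by $f$ is $\gg_{\delta,k}1$. I would prove this by \emph{transference}: write $f=g+h$ where $0\le g\le 1+o(1)$ has the same mean as $f$ and $h$ is small in the Gowers norm $U^{k-1}$ relative to $\nu$; such a decomposition is produced by a Hahn--Banach (or energy-increment / Koopman--von Neumann) argument, i.e.\ a \emph{dense model theorem}. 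A \emph{generalized von Neumann theorem} — a counting lemma that remains valid for functions dominated by $\nu$ rather than by $1$ — shows the $k$-AP count is insensitive, up to $o(1)$, to $U^{k-1}$-small perturbations of $f$; hence it may be evaluated at $g$, where ordinary Szemerédi in its averaged (Varnavides) form supplies the lower bound.

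The remaining step, and in my view the main obstacle, is the \emph{construction and verification of $\nu$}. I would take a truncated Goldston--Yıldırım sieve weight, $\nu(n)=\frac{\varphi(W)}{W\log R}\bigl(\sum_{d\mid Wn+b}\mu(d)\,\psi(\tfrac{\log d}{\log R})\bigr)^2$ with $R=N^{\eta}$ for a small fixed $\eta>0$ and $\psi$ a smooth cutoff, normalized so that $\EE\nu=1+o(1)$; one checks directly that $f\le C\nu$ pointwise. The pseudorandomness conditions then unwind into asymptotics for averages $\EE_n\prod_i\Lambda_R(L_i(n))$ over bounded families of linear forms $L_i$; expanding the divisor sums and exchanging the order of summation converts these into products of local densities (singular series / Euler factors), estimated prime by prime. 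Controlling the error terms uniformly over the finitely many linear forms that appear both in the $U^{k-1}$ norm and in the $k$-AP configuration is the delicate sieve-theoretic computation. The transference principle, though it is the conceptual novelty, I expect to be comparatively soft once the right norms are in place.
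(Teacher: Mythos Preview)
Your outline is correct and would prove the theorem, but it follows the \emph{original} Green--Tao architecture rather than the one this paper actually presents. You transfer with respect to the Gowers $U^{k-1}$-norm: a dense model theorem produces $g$ close to $f$ in $U^{k-1}$, and a generalized von Neumann inequality (valid for $\nu$-bounded functions) shows the $k$-AP count is $U^{k-1}$-continuous. This works, but the von Neumann step then forces $\nu$ to satisfy both a linear forms condition \emph{and} a correlation condition, as you note.

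The paper instead transfers with respect to a weaker \emph{cut norm} $\norm{\cdot}_{\square,k-1}$. The dense model theorem becomes easier (the hypothesis $\snorm{\nu-1}_\square=o(1)$ is milder and the dual-norm argument is cleaner), but the price is that cut-norm closeness does not immediately control $k$-AP counts for $\nu$-bounded functions. The paper pays this price with a new \emph{relative counting lemma} proved by a ``densification'' induction: one repeatedly replaces a sparse edge-weight by its averaged (hence essentially bounded) codegree function, reducing to the dense counting lemma. The payoff is that only the $k$-linear forms condition is needed---the correlation condition, and the sieve estimates verifying it, disappear entirely from the argument. So: same global shape (W-trick, relative Szemer\'edi via dense model plus counting, Goldston--Y{\i}ld{\i}r{\i}m majorant), but a different choice of norm that trades a harder counting lemma for a genuinely weaker pseudorandomness hypothesis.
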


Our intention is to give a complete proof of this theorem.
Although there have been numerous other expositions \cite{Gow10,G07,Host06,Kra06,Tao06col,Tao06pamq,Tao07icm},
we were prompted to write this note because of our
recent work \cite{CFZrelsz,Zhao14} simplifying one of the key technical
ingredients in the proof. Together with work of Gowers \cite{Gow10}, Reingold, Trevisan, Tulisiani, and Vadhan \cite{RTTV08}, and Tao \cite{Taonote},
there have now been substantial simplifications to almost every aspect of the proof. We have chosen to collect these simplifications and present an
up-to-date exposition in order to make the proof more accessible.

A key element in the proof of Theorem~\ref{thm:gt} is Szemer\'edi's theorem~\cite{Sze75} on arithmetic progressions in dense subsets of the integers. To state this theorem, we define the \emph{upper
  density} of a set $A \subseteq \NN$ to be
\[
\limsup_{N \to \infty} \frac{\abs{A \cap [N]}}{N}, \qquad
\text{where } [N] := \{1, 2, \dots, N\}.
\]
\begin{theorem} [Szemer\'edi] \label{thm:sz}
  Every subset of $\NN$ with positive upper density contains
  arbitrarily long arithmetic progressions.
\end{theorem}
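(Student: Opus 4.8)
\medskip
\noindent\textbf{Proof proposal.} Since Theorem~\ref{thm:sz} will be used only as a black box in the proof of Theorem~\ref{thm:gt}, one option is simply to quote it from~\cite{Sze75}. The plan I would follow to prove it is to reduce first to a finitary statement and then to a combinatorial removal lemma.

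\emph{Reduction to a finitary statement.} It is enough to show that for every $k \ge 1$ and $\delta > 0$ there is an $N_0 = N_0(k,\delta)$ such that every $A \subseteq [N]$ with $N \ge N_0$ and $\abs{A} \ge \delta N$ contains a $k$-term arithmetic progression with nonzero common difference. Indeed, if $A \subseteq \NN$ has upper density $\delta > 0$, then $\abs{A \cap [N]} \ge (\delta/2)N$ for infinitely many $N$, and applying the finitary statement (with $\delta/2$ in place of $\delta$) once for each $k$ shows that $A$ contains arbitrarily long progressions.

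\emph{Reduction to a removal lemma.} Fix $k$ and $A \subseteq [N]$ with $\abs{A} \ge \delta N$, and suppose for contradiction that $A$ contains no $k$-progression with nonzero common difference. For $k = 3$ this is the classical Ruzsa--Szemer\'edi argument: take vertex classes $X, Y, Z$, each a copy of $\ZZ_M$ with $M \approx 3N$ (and $M$ odd), join $x \in X$ to $y \in Y$ when $y - x \in A$, join $y$ to $z \in Z$ when $z - y \in A$, and join $x$ to $z$ when $(z-x)/2 \in A$; then a triangle records exactly a relation $y - x = z - y = (z-x)/2 \in A$, that is, a $3$-progression in $A$. By hypothesis the only triangles are the $\Theta(\delta M^2)$ diagonal ones (coming from common difference zero), and these are pairwise edge-disjoint. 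The triangle removal lemma says that $o(M^3)$ triangles can be destroyed by deleting $o(M^2)$ edges, which contradicts the existence of $\Theta(\delta M^2)$ edge-disjoint triangles once $M$ is large; hence $A$ contains a nontrivial $3$-progression, which is Roth's theorem. For general $k$ one encodes $k$-progressions as copies of the complete $(k-1)$-uniform hypergraph on $k$ vertices inside a suitable $(k-1)$-uniform hypergraph on $k$ vertex classes, and replaces triangle removal by the hypergraph removal lemma, the same edge-disjointness of the diagonal copies forcing the contradiction.

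\emph{The main obstacle.} Everything above is elementary; the real content is the hypergraph removal lemma, whose known proofs all pass through some form of hypergraph regularity lemma together with a matching counting lemma (Gowers; Nagle--R\"odl--Schacht--Skokan; later Tao). This regularization is considerably more delicate than Szemer\'edi's graph regularity lemma, and getting the counting lemma to cooperate with the regularity partition is the crux; I would expect essentially all the work to lie there, with the difficulty increasing sharply in $k$ (for $k = 3$ only ordinary graph regularity is needed, which is comparatively mild). Two alternatives I would also keep in mind are Gowers's Fourier-analytic density-increment proof, which additionally gives an explicit, if enormous, bound on $N_0(k,\delta)$ but relies on the inverse theorem for the Gowers $U^{k-1}$ norms, itself hard for $k \ge 4$; and Furstenberg's ergodic-theoretic proof, which obtains the finitary statement from a multiple recurrence theorem proved via a structure theorem for measure-preserving systems.
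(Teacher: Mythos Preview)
The paper does not prove Theorem~\ref{thm:sz} at all: it explicitly treats Szemer\'edi's theorem as a black box, citing~\cite{Sze75} and remarking that ``none of these approaches are easy.'' Your first sentence already matches the paper's treatment exactly.

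The remainder of your proposal --- the reduction to a finitary statement and then to hypergraph removal --- is a correct outline of one of the standard modern proofs, and the paper even presents the Ruzsa--Szemer\'edi graph construction for the $k=3$ case in Section~\ref{sec:roth} (with slightly different linear forms than yours, but equivalent). However, the paper stops short of proving even the triangle removal lemma, noting that doing so ``would lead us too far down the route of proving Szemer\'edi's theorem.'' So your sketch goes well beyond what the paper attempts; it is accurate as a roadmap, and your identification of the hypergraph regularity/counting pair as the real obstacle is on the mark, but none of this appears in the paper's own treatment of Theorem~\ref{thm:sz}.
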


Szemer\'edi's theorem is a deep and important result and the original
proof~\cite{Sze75} is long and complex. It has had a huge impact on the subsequent development
of combinatorics and, in particular, was responsible for the introduction
of the \emph{regularity lemma}, now a cornerstone of
modern combinatorics. Numerous different proofs of Szemer\'edi's theorem have since been
discovered and all of them have introduced important new ideas that grew
into active areas of research. The three main modern approaches to
Szemer\'edi's theorem are by ergodic theory~\cite{F77,FKO82}, higher
order Fourier analysis~\cite{Gow98,Gow01}, and hypergraph
regularity~\cite{Gow07, NRS06, RS04, RS06, Tao06jcta}. However, none
of these approaches are easy. We shall therefore assume Szemer\'edi's theorem as
a black box and explain how to derive the Green-Tao theorem using it.

As the set of primes has density zero, Szemer\'edi's theorem does not
immediately imply the Green-Tao theorem. Nevertheless, Erd\H{o}s famously
conjectured that the density of the primes alone should guarantee the existence
of long APs.\footnote{For brevity, we will usually write AP for arithmetic
progression and $k$-AP for a $k$-term AP.} Specifically, he conjectured
that any subset $A$ of $\NN$ with divergent harmonic sum, i.e., $\sum_{a \in A} 1/a = \infty$,
must contain arbitrarily long APs. This conjecture is widely believed to be
true, but it has yet to be proved even in the case of 3-term
APs.\footnote{A recent result of Sanders \cite{San11} is within
  a hair's breadth of verifying Erd\H{o}s' conjecture for
  3-APs. Sanders proved that every $3$-AP-free subset of $[N]$ has size at most $O(N (\log\log N)^6 / \log N)$, which is just slightly shy of the logarithmic density barrier that one wishes to
  cross (see Bloom~\cite{Bloom} for a recent improvement). In the other direction, Behrend \cite{Be46} constructed a $3$-AP-free subset of $[N]$ of size $Ne^{-O(\sqrt{\log N})}$. There is some evidence to suggest that Behrend's lower bound is closer to the truth (see \cite{SS14}). For longer APs, the gap is much larger. The best upper bound, due to Gowers \cite{Gow01}, is that every $k$-AP-free subset of $[N]$ has size at most $N/(\log\log N)^{c_k}$ for some
$c_k > 0$ (though for
$k=4$ there have been some improvements \cite{GT09r4}).}

If not by density considerations, how do Green and Tao prove their
theorem? The answer is that they treat Szemer\'edi's theorem as a black box
and show, through a \emph{transference principle}, that a Szemer\'edi-type statement holds
relative to sparse pseudorandom subsets of the integers, where a set is said to be \emph{pseudorandom} if it resembles a random set
of similar density in terms of certain statistics or properties. We refer to such a statement as a
\emph{relative Szemer\'edi theorem}. Given two sets $A$ and $S$ with $A \subseteq S$, we define the
relative upper density of $A$ in $S$ to be $\limsup_{N
  \to \infty} \abs{A \cap [N]}/\abs{S \cap [N]}$.

\begin{namedtheorem}[Relative Szemer\'edi theorem] \emph{(Informally)}
If $S$
is a (sparse) set of
integers satisfying certain pseudorandomness conditions and $A$ is a subset
of $S$ with positive relative density, then $A$ contains arbitrarily long APs.
\end{namedtheorem}

To prove the Green-Tao theorem, it then suffices to show that there is a set of ``almost primes''
containing, but not much larger than, the primes which satisfies the required pseudorandomness
conditions. In the work of Green and Tao, there are two such conditions, known as the linear
forms condition and the correlation condition.

The proof of the Green-Tao theorem therefore falls into two parts, the first part being the proof of the relative
Szemer\'edi theorem and the second part being the construction of an appropriately pseudorandom
superset of the primes. Green and Tao credit the contemporary work of Goldston and Y\i ld\i r\i m~\cite{GY03} for the construction and estimates used in the second half of the proof. Here we will follow a simpler approach discovered by Tao \cite{Taonote}.

The proof of the relative Szemer\'edi theorem also splits into two parts, the dense model theorem and the counting lemma.
Roughly speaking, the dense model theorem allows us to say that if $S$ is a sufficiently pseudorandom set then any relatively dense subset $A$ of $S$
may be ``approximated'' by a dense subset $\tA$ of $\mathbb{N}$, while the counting lemma shows
that the number of arithmetic progressions in $A$ is close, up to a normalization factor, to the number of arithmetic progressions
in $\tA$. Since $\tA$ is a dense subset of $\mathbb{N}$, Szemer\'edi's theorem implies that $\tA$ contains arbitrarily long
APs and this in turn implies that $A$ contains arbitrarily long APs.

This is also the outline we will follow in this paper, though for each part we will follow a different approach to the original paper. For the counting lemma, we will follow the recent approach taken by the authors in \cite{CFZrelsz}. This approach has significant advantages over the original method of Green and Tao, not least of which is that a weakening of the linear forms condition is sufficient for the relative Szemer\'edi theorem to hold. This means that the estimates involved in verifying the correlation condition may now be omitted from the proof.

In \cite{CFZrelsz}, the dense model theorem was replaced with a certain sparse regularity lemma. However, as subsequently observed by Zhao \cite{Zhao14}, the original dense model theorem may also be used. To prove the dense model theorem, we will follow an elegant method developed independently by Gowers \cite{Gow10} and by Reingold, Trevisan, Tulsiani, and Vadhan \cite{RTTV08}.

The $3$-AP case of Szemer\'edi's theorem was first proved by Roth~\cite{Roth53} in the 1950s.
While Roth's theorem, as this case is usually known, is already a very interesting and
nontrivial result, the $3$-AP case is substantially easier than the general result. In contrast, when proving a relative Szemer\'edi
theorem by \emph{transferring} Szemer\'edi's theorem down to the sparse setting, the general case is not mathematically more difficult than the $3$-AP case. However, as one might expect, the
notation for the general case can be rather cumbersome. For this reason, we explain various aspects of the proof first for 3-APs and only afterwards discuss how it can be adapted to the general case.

We begin, in Section \ref{sec:roth}, by presenting the Ruzsa-Szemer\'edi  graph-theoretic approach to Roth's theorem. In particular, we present a graph-theoretic construction that will motivate the definition of the linear forms conditions, which we state in Sections \ref{sec:rel-roth} and \ref{sec:rel-sz}, first for Roth's theorem, then for Szemer\'edi's theorem. The dense model theorem and the counting lemma are explained in
Sections \ref{sec:dense-model} and \ref{sec:count}, respectively. We conclude the proof of the relative Szemer\'edi theorem in Section \ref{sec:pf-rel-sz}.
In Sections \ref{sec:maj} and \ref{sec:verify-lfc}, we will construct the relevant set of almost primes (or rather a majorizing measure for the primes) and show that it satisfies the linear forms condition. We conclude with some remarks about extensions of the Green-Tao theorem.

\section{Roth's theorem via graph theory} \label{sec:roth}

One way to state Szemer\'edi's theorem is that for every fixed $k$ every $k$-AP-free subset of $[N]$ has $o(N)$ elements. It is not hard to prove that this ``finitary'' version of
Szemer\'edi's theorem is equivalent to the ``infinitary'' version
stated as Theorem~\ref{thm:sz}.

In fact, it will be more convenient to work in the setting of the
abelian group $\ZZ_N := \ZZ/N\ZZ$ as opposed to $[N]$. These two
settings are roughly equivalent for studying $k$-APs, with the only difference being that
$\ZZ_N$ allows APs to wrap around $0$. For example, $N-1, 0, 1$ is a $3$-AP in $\ZZ_N$, but not in $[N]$. To deal with this issue,
one simply embeds $[N]$ into a slightly larger cyclic group
so that no $k$-APs wrap around zero. Working in $\ZZ_N$, we will now show how Roth's theorem follows from a result in graph theory.

\begin{theorem} [Roth] \label{thm:roth}
  If $A \subseteq \ZZ_N$ is $3$-AP-free, then $\abs{A} = o(N)$.
\end{theorem}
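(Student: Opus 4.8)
The plan is to deduce Roth's theorem from the \emph{triangle removal lemma}---for every $\epsilon > 0$ there is $\delta > 0$ such that every graph on $n$ vertices containing at most $\delta n^3$ triangles can be made triangle-free by deleting at most $\epsilon n^2$ edges---which is itself a standard consequence of Szemer\'edi's regularity lemma. The bridge between the two statements is the Ruzsa--Szemer\'edi construction, which turns a $3$-AP-free set into a graph whose only triangles are ``trivial'' and, crucially, pairwise edge-disjoint.

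First I would encode $A \subseteq \ZZ_N$ as a tripartite graph $G$ on vertex classes $X, Y, Z$, each a copy of $\ZZ_N$: join $x \in X$ to $y \in Y$ whenever $y - x \in A$, join $y \in Y$ to $z \in Z$ whenever $z - y \in A$, and join $x \in X$ to $z \in Z$ whenever $z - x \in 2 \cdot A := \set{2a : a \in A}$. The point of this choice is that a triangle $\set{x, y, z}$ of $G$ is precisely a triple $(a_1, a_2, a_3) \in A^3$ with $a_1 = y - x$, $a_2 = z - y$, and $2a_3 = z - x$; since then $a_1 + a_2 = 2a_3$, this is exactly a three-term arithmetic progression $a_1, a_3, a_2$ lying in $A$. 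As $A$ is $3$-AP-free, the progression must be constant, $a_1 = a_2 = a_3 =: a$, so the triangles of $G$ are exactly the triples $\set{x, x+a, x+2a}$ with $x \in \ZZ_N$ and $a \in A$. There are exactly $N\abs{A}$ of these, and they are pairwise edge-disjoint because the $X$--$Y$ edge of such a triangle already determines both $x$ and $a$. (No hypothesis on the parity of $N$ is needed, as we never invert $2$.)

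To conclude, I would apply the triangle removal lemma to $G$, which has $n = 3N$ vertices and at most $N\abs{A} \le N^2$ triangles---fewer than $\delta n^3$ once $N$ is large. Hence, for any fixed $\epsilon > 0$ and all sufficiently large $N$, some set of at most $\epsilon n^2 = 9\epsilon N^2$ edges meets every triangle of $G$; but since the $N\abs{A}$ triangles are edge-disjoint, at least $N\abs{A}$ edges are required, so $\abs{A} \le 9\epsilon N$. Letting $\epsilon \to 0$ gives $\abs{A} = o(N)$.

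All of the real work is inside the triangle removal lemma; the reduction above is a short double-counting argument. The genuine obstacle---and the reason the resulting bound on $\abs{A}$ is extremely weak (the dependence of $1/\delta$ on $1/\epsilon$ is tower-type)---is the removal lemma's passage from ``few triangles'' to ``small triangle cover,'' which runs through the regularity lemma. Within the reduction itself the only thing to check carefully is that $G$ really has \emph{no} non-trivial triangle: this is immediate in $\ZZ_N$ because the identity $a_1 + a_2 = 2a_3$ holds on the nose there, the wrap-around subtleties of working modulo $N$ having already been dealt with when passing from $[N]$ to $\ZZ_N$.
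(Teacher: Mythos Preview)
Your approach is essentially the paper's: encode $A$ as a tripartite graph on $3N$ vertices so that triangles correspond to $3$-APs in $A$, use the $3$-AP-free hypothesis to see that only trivial triangles survive, and finish via triangle removal (the paper routes this through the Ruzsa--Szemer\'edi theorem, which it notes follows from removal). Your encoding differs from the paper's choice of linear forms $2x+y,\ x-z,\ -y-2z$, but the argument is the same.

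One small point: your justification that the triangles are pairwise edge-disjoint checks only the $X$--$Y$ edges. The $Y$--$Z$ case is symmetric, but an $X$--$Z$ edge $(x,z)$ determines only $2a=z-x$, and when $N$ is even there can be two $a\in A$ with the same $2a$, so two triangles may share an $X$--$Z$ edge. This is harmless---each edge lies in at most two triangles, so the removal argument still gives $\abs{A}\le 18\epsilon N$---but your parenthetical ``no hypothesis on the parity of $N$ is needed'' slightly overstates what your one-line justification establishes. (The paper's encoding sidesteps this: there every edge lies in \emph{exactly} one triangle, the one with $x+y+z=0$.)
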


\begin{figure}
  \centering
  \begin{tikzpicture}
    [scale=.6,
    V/.style = {draw, fill=white,circle, inner sep =
      0em, outer sep = .1em, minimum size=5em},
    b/.style={ball color = gray,circle,inner sep = 0,minimum
      size=2mm},
    e/.style={line width=.5mm,line cap = rect}
    ]
    \begin{scope}[shift={(10,0)}]
      \node at (-4,3.5) {$G_A$};

      \path[use as bounding box] (-5,-3) rectangle (5,4.3);
      \fill[blue!20,opacity=.4] ($(90:2.5) + (150:2.5em)$) -- ($(90:2.5) + (150:-2.5em)$) --
      ($(210:2.5) + (150:-2.5em)$) -- ($(210:2.5)+(150:2.5em)$);
      \fill[blue!20,opacity=.4] ($(90:2.5) + (30:2.5em)$) -- ($(90:2.5) + (30:-2.5em)$) --
      ($(330:2.5) + (30:-2.5em)$) -- ($(330:2.5)+(30:2.5em)$);
      \fill[blue!20,opacity=.4] ($(330:2.5) + (90:2.5em)$) -- ($(330:2.5) + (90:-2.5em)$) --
      ($(210:2.5) + (90:-2.5em)$) -- ($(210:2.5)+(90:2.5em)$);
      \draw[circle,fill=white] (90:2.5) circle (2.5em) {};
      \draw[circle,fill=white] (210:2.5) circle (2.5em) {};
      \draw[circle,fill=white] (330:2.5) circle (2.5em) {};

      \node at (90:4) {$X = \ZZ_N$};
      \node at (200:4.7) {$Y = \ZZ_N$};
      \node at (340:4.7) {$Z = \ZZ_N$};

      \node[b,label=above:{\small $x$}] (x) at (80:2){};
      \node[b,label=left:{\small $y$}] (y) at (220:2){};
      \node[b,label=right:{\small $z$}] (z) at (330:2){};

      \draw[e] (x)--(y);
      \node[font=\footnotesize,align=center] at (150:2.8)
      {$x \sim y$ iff \\ $2x + y \in A$};

      \draw[e] (x)--(z);
      \node[font=\footnotesize,align=center] at (30:2.8)
      {$x \sim z$ iff \\ $x-z \in A$};

      \draw[e] (y)--(z);
      \node[font=\footnotesize,align=center] at (-90:2.4)
      {$y \sim z$ iff \\ $-y-2z \in A$};
    \end{scope}
  \end{tikzpicture}
  \caption{The construction in the proof of Roth's theorem.} \label{fig:G_A}
\end{figure}
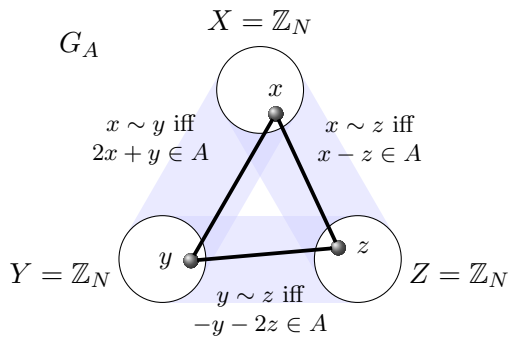

Consider the following graph construction (see
Figure~\ref{fig:G_A}). Given $A \subseteq \ZZ_N$, we construct a
tripartite graph $G_A$ whose vertex sets are $X$, $Y$, and $Z$, each with $N$
vertices labeled by elements of $\ZZ_N$. The edges are constructed as
follows (one may think of this as a variant of the Cayley graph for
$\ZZ_N$ generated by $A$):
\begin{itemize}
\item $(x,y) \in X \x Y$ is an edge if and only if $2x + y \in A$;
\item $(x,z) \in X \x Z$ is an edge if and only if $x - z \in A$;
\item $(y,z) \in Y \x Z$ is an edge if and only if $-y-2z \in A$.
\end{itemize}
Observe that $(x,y,z) \in X \x Y \x Z$ forms a triangle if and only if
all three of
\[
2x+y,\quad x-z, \quad -y-2z
\]
are in $A$. These numbers form a $3$-AP with common difference $-x-y-z$, so we see that
triangles in $G_A$ correspond to $3$-APs in $A$.

However, we assumed that $A$ is $3$-AP-free. Does this mean that $G_A$
has no triangles? Not quite. There are still some triangles
in $G_A$, namely those that correspond to trivial $3$-APs in $A$, i.e.,
$3$-APs with common difference zero. So the triangles in $G_A$ are
precisely those with $x + y + z = 0$. This easily implies
that every edge in $G_A$ is
contained in exactly one triangle, namely the one that completes
the equation $x+y+z = 0$.

What can we say about a graph where every edge is contained in exactly
one triangle? The following result of Ruzsa and Szemer\'edi \cite{RS78} shows that it cannot have many edges.

\begin{theorem}[Ruzsa-Szemer\'edi] \label{thm:RS}
 If $G$ is a graph on $n$ vertices with every edge in exactly one triangle, then $G$ has $o(n^2)$ edges.
\end{theorem}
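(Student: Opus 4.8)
The plan is to derive the Ruzsa--Szemer\'edi theorem from the triangle removal lemma, which is itself the standard consequence of Szemer\'edi's regularity lemma. Recall the \emph{triangle removal lemma}: for every $\e > 0$ there is $\delta > 0$ such that any graph on $n$ vertices with at most $\delta n^3$ triangles can be made triangle-free by deleting at most $\e n^2$ edges. (If one prefers not to invoke this as a black box, it follows from the regularity lemma by the usual argument: apply the regularity lemma to $G$, delete all edges in irregular pairs, in low-density pairs, and inside the parts; what remains, if it contains a triangle, actually contains many triangles by the counting/embedding lemma for regular pairs, contradicting the hypothesis.)

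Given this, here is the argument. Suppose $G$ is a graph on $n$ vertices in which every edge lies in exactly one triangle, and suppose for contradiction that $G$ has at least $\e n^2$ edges for some fixed $\e > 0$. The key observation is that $G$ then has \emph{few} triangles: since the triangles are edge-disjoint (two distinct triangles sharing an edge would put that edge in at least two triangles), the number of triangles is at most $\frac{1}{3}\abs{E(G)} \le \frac{1}{3}\binom{n}{2} < n^2$, and in particular it is at most $\delta n^3$ once $n$ is large (for the $\delta = \delta(\e/3)$ supplied by the removal lemma applied with parameter $\e/3$). Hence the triangle removal lemma applies: we can destroy all triangles of $G$ by removing at most $\frac{\e}{3} n^2$ edges. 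But each removed edge lay in exactly one triangle, so removing $m$ edges destroys at most $m$ triangles; since $G$ has at least $\frac{1}{3}\e n^2 - o(n^2)$ edges and the triangles are edge-disjoint, $G$ has at least roughly $\frac{\e}{3} n^2$ triangles, a contradiction. Let me restate this count cleanly: every edge is in exactly one triangle, so summing over edges, $3 \cdot (\text{number of triangles}) = \abs{E(G)} \ge \e n^2$, giving at least $\frac{\e}{3} n^2$ triangles; yet to destroy all of them we'd need to remove at least one edge per triangle from disjoint triangles, i.e.\ at least $\frac{\e}{3}n^2$ edges, contradicting that $\frac{\e}{3}n^2$ edges suffice only when...

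\begin{proof}[Proof sketch]
Suppose $G$ has $n$ vertices and every edge is in exactly one triangle. Then distinct triangles of $G$ are edge-disjoint, so $G$ has exactly $\frac{1}{3}\abs{E(G)} \le \frac{1}{3}\binom{n}{2}$ triangles, which is at most $\delta n^3$ for $n$ large. Fix $\e>0$ and let $\delta = \delta(\e)$ be given by the triangle removal lemma. For $n$ large enough we may apply the removal lemma: there is a set $F$ of at most $\e n^2$ edges whose deletion makes $G$ triangle-free. Since every edge of $G$ lies in exactly one triangle and triangles are edge-disjoint, deleting an edge destroys exactly one triangle, so $\abs{F}$ is at least the number of triangles of $G$, which equals $\frac{1}{3}\abs{E(G)}$. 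Hence $\abs{E(G)} \le 3\abs{F} \le 3\e n^2$. As $\e>0$ was arbitrary, $\abs{E(G)} = o(n^2)$.
\end{proof}

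The only genuine input is the triangle removal lemma, and the main obstacle is really just invoking it correctly --- the bookkeeping above (edge-disjointness forces the triangle count to be small enough to trigger the lemma, and simultaneously forces every edge deletion to be ``efficient,'' so destroying all triangles requires as many deleted edges as there are triangles) is elementary once the removal lemma is in hand. If the removal lemma is not assumed, the obstacle shifts to proving it from the regularity lemma, whose crux is the counting lemma asserting that three pairwise $\e$-regular pairs of positive density span $\gg n^3$ triangles; this is a routine but somewhat delicate double-counting argument over the regular partition.
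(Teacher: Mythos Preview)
Your argument is correct and matches the paper's approach: the paper does not actually prove Theorem~\ref{thm:RS} but only remarks that it ``easily follows from a result known as the \emph{triangle removal lemma},'' which is precisely the black box you invoke. Your bookkeeping (edge-disjoint triangles force both a small triangle count and a lower bound on the number of edges needed to destroy them all) is the standard deduction and is sound.
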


Our graph $G_A$ has $3N$ vertices and $3N|A|$ edges (for
every $x \in X$, there are exactly $\abs{A}$ vertices $y \in Y$ with
$2x + y \in A$ and similarly for $Y \x Z$ and $X \x Z$). So it follows by
Theorem~\ref{thm:RS} that $3N |A| = o((3N)^2)$. Hence $\abs{A}
= o(N)$, proving Roth's theorem.

Theorem~\ref{thm:RS} easily follows from a result known as the \emph{triangle removal lemma},
which says that if a graph on $n$ vertices has $o(n^3)$ triangles,
then it can be made triangle-free by removing $o(n^2)$ edges.
Though both results look rather innocent, it is only recently \cite{CF13,F11} that a proof was found
which avoids the use of Szemer\'edi's regularity lemma.

We will not include a proof of Theorem~\ref{thm:RS} here, since this
would lead us too far down the route of proving Szemer\'edi's theorem. However, if our purpose was
not to prove Roth's theorem, then why translate it into graph-theoretic language in the first place? The reason is that
the counting lemma and pseudorandomness conditions used for transferring Roth's theorem
to the sparse setting are most naturally phrased in terms of graph theory.\footnote{However, it is worth stressing that the bounds in the relative Roth theorem do not reflect the poor bounds given by the graph theoretic approach to Roth's theorem. While graph theory is a convenient language for phrasing the transference principle, Roth's theorem itself only appears as a black box and any bounds we have for this theorem transfer directly to the sparse version.} We will begin to make
this explicit in the next section.

\section{Relative Roth theorem} \label{sec:rel-roth}

In this section, we describe the relative Roth theorem. We first give an informal statement.

\begin{namedtheorem}[Relative Roth Theorem] \emph{(Informally)} If $S
  \subseteq \ZZ_N$ satisfies certain pseudorandomness conditions and $A
  \subseteq S$ is $3$-AP-free, then $\abs{A} = o(\abs{S})$.
\end{namedtheorem}

\begin{figure}

  \newcommand{\VR}{2.5cm}
  \newcommand{\Vr}{2.5em}

  \begin{tikzpicture}[scale=.5,
    b/.style={ball color = gray,circle,inner sep = 0,minimum
      size=1mm},
    e/.style={line width=.5mm,line cap = rect}
    ]

    \begin{scope}[shift={(-2,0)}]
      \node at (0,5) {$G_S$};

      \fill[blue!20,opacity=.4] ($(90:2.5) + (150:2.5em)$) -- ($(90:2.5) + (150:-2.5em)$) --
      ($(210:2.5) + (150:-2.5em)$) -- ($(210:2.5)+(150:2.5em)$);
      \fill[blue!20,opacity=.4] ($(90:2.5) + (30:2.5em)$) -- ($(90:2.5) + (30:-2.5em)$) --
      ($(330:2.5) + (30:-2.5em)$) -- ($(330:2.5)+(30:2.5em)$);
      \fill[blue!20,opacity=.4] ($(330:2.5) + (90:2.5em)$) -- ($(330:2.5) + (90:-2.5em)$) --
      ($(210:2.5) + (90:-2.5em)$) -- ($(210:2.5)+(90:2.5em)$);
      \draw[circle,fill=white] (90:2.5) circle (2.5em) {};
      \draw[circle,fill=white] (210:2.5) circle (2.5em) {};
      \draw[circle,fill=white] (330:2.5) circle (2.5em) {};

      \node at (50:4) {$X = \ZZ_N$};
      \node at (200:4.7) {$Y = \ZZ_N$};
      \node at (340:4.7) {$Z = \ZZ_N$};

      \node[b,label=above:{\small $x$}] (x) at (80:2){};
      \node[b,label=left:{\small $y$}] (y) at (220:2){};
      \node[b,label=right:{\small $z$}] (z) at (330:2){};

      \draw[e] (x)--(y);
      \node[font=\footnotesize,align=center] at (155:2.8)
      {$x \sim y$ iff \\ $2x + y \in S$};

      \draw[e] (x)--(z);
      \node[font=\footnotesize,align=center] at (25:3)
      {$x \sim z$ iff \\ $x-z \in S$};

      \draw[e] (y)--(z);
      \node[font=\footnotesize,align=center] at (-90:2.4)
      {$y \sim z$ iff \\ $-y-2z \in S$};
    \end{scope}

    \begin{scope}[shift={(10,0)}]
      \fill[blue!20,opacity=.4] ($(90:\VR) + (150:\Vr)$) -- ($(90:\VR) + (150:-\Vr)$) --
      ($(210:\VR) + (150:-\Vr)$) -- ($(210:\VR)+(150:\Vr)$);
      \fill[blue!20,opacity=.4] ($(90:\VR) + (30:\Vr)$) -- ($(90:\VR) + (30:-\Vr)$) --
      ($(330:\VR) + (30:-\Vr)$) -- ($(330:\VR)+(30:\Vr)$);
      \fill[blue!20,opacity=.4] ($(330:\VR) + (90:\Vr)$) -- ($(330:\VR) + (90:-\Vr)$) --
      ($(210:\VR) + (90:-\Vr)$) -- ($(210:\VR)+(90:\Vr)$);
      \draw[circle,fill=white] (90:\VR) circle (\Vr) {};
      \draw[circle,fill=white] (210:\VR) circle (\Vr) {};
      \draw[circle,fill=white] (330:\VR) circle (\Vr) {};

      \node[b] (x) at (80:2.5){};
      \node[b] (x') at (100:2.5){};
      \node[b] (y) at (200:2.5){};
      \node[b] (y') at (220:2.5){};
      \node[b] (z) at (320:2.5){};
      \node[b] (z') at (340:2.5){};

      \draw[e] (x)--(y)--(x')--(y')--(x);
      \draw[e] (x)--(z)--(x')--(z')--(x);
      \draw[e] (y)--(z)--(y')--(z')--(y);

      \node[align=center,font=\small] at (5,1) {$K_{2,2,2}$  \&
        subgraphs, \\ e.g.,};

      \node[align=center] at (5,4.5) {Pseudorandomness hypothesis for $S
        \subseteq \ZZ_N$:
        \\ {\small $G_S$ has asymptotically the expected number of embeddings of}};
    \end{scope}

    \begin{scope}[shift={(20,0)}]
      \fill[blue!20,opacity=.4] ($(90:\VR) + (150:\Vr)$) -- ($(90:\VR) + (150:-\Vr)$) --
      ($(210:\VR) + (150:-\Vr)$) -- ($(210:\VR)+(150:\Vr)$);
      \fill[blue!20,opacity=.4] ($(90:\VR) + (30:\Vr)$) -- ($(90:\VR) + (30:-\Vr)$) --
      ($(330:\VR) + (30:-\Vr)$) -- ($(330:\VR)+(30:\Vr)$);
      \fill[blue!20,opacity=.4] ($(330:\VR) + (90:\Vr)$) -- ($(330:\VR) + (90:-\Vr)$) --
      ($(210:\VR) + (90:-\Vr)$) -- ($(210:\VR)+(90:\Vr)$);
      \draw[circle,fill=white] (90:\VR) circle (\Vr) {};
      \draw[circle,fill=white] (210:\VR) circle (\Vr) {};
      \draw[circle,fill=white] (330:\VR) circle (\Vr) {};

      \node[b] (x) at (80:2.5){};
      \node[b] (x') at (100:2.5){};
      \node[b] (y) at (200:2.5){};
      \node[b] (y') at (220:2.5){};
      \node[b] (z) at (320:2.5){};
      \node[b] (z') at (340:2.5){};

      \draw[e] (x)--(y)(x')(y')(x);
      \draw[e] (x)--(z)--(x')(z')--(x);
      \draw[e] (y)--(z)(y')--(z')--(y);
    \end{scope}
  \end{tikzpicture}
  \caption{Pseuodrandomness conditions for the relative Roth theorem.} \label{fig:rel-roth}
\end{figure}
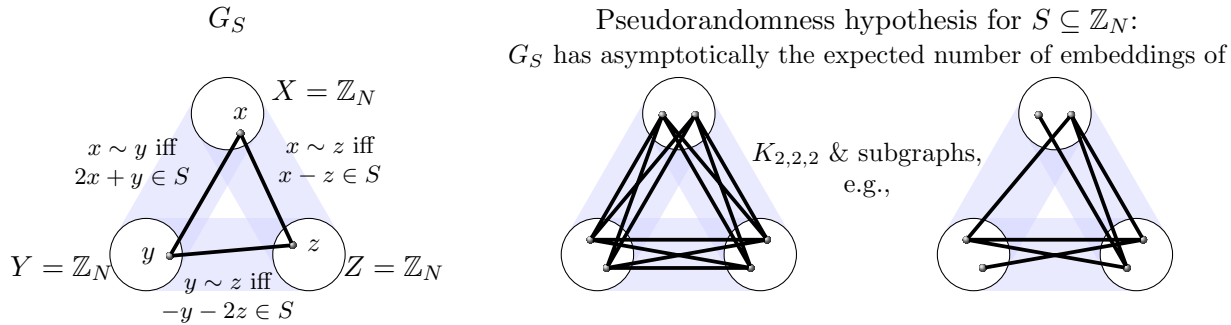

To state the pseudorandomness conditions, let $p = \abs{S}/N$ (which may decrease
as a function of $N$) and consider the graph $G_S$. This is similar to $G_A$,
except that $(x,y) \in X \x Y$ is now made an edge if and only if $2x + y
\in S$, etc. The pseudorandomness hypothesis now asks that the number of
embeddings of $K_{2,2,2}$ in $G_S$ (i.e., the number of tuples $(x_1,
x_2, y_1, y_2, z_1, z_2) \in X \x X \x Y \x Y \x Z \x Z$ where
$x_iy_j, x_iz_j, y_iz_j$ are all edges for all $i,j \in \{1,2\}$) be equal to
$(1 + o(1))p^{12} N^6$, where $o(1)$ indicates a quantity that tends to zero as $N$ tends to infinity.
This is asymptotically the same as the expected number of embeddings of $K_{2,2,2}$
in a random tripartite graph of density $p$ or in the graph $G_S$ formed from a random set $S$ of density $p$.
Assuming that $p$ does not decrease too rapidly with $N$, it is possible to show that with high probability the
true $K_{2,2,2}$-count in these random graphs is asymptotic to this expectation. It is therefore
appropriate to think of our condition as a type of pseudorandomness.

For technical reasons, it is necessary to assume that this property of having the ``correct'' count
holds not only for $K_{2,2,2}$ but also for every subgraph
$H$ of $K_{2,2,2}$. That is, we ask that the number of embeddings of $H$ into $G_S$
(with vertices of $H$ mapped into their assigned parts) be equal to $(1+ o(1))
p^{e(H)} N^{v(H)}$. The full description is now summarized in Figure~\ref{fig:rel-roth}, although we will
restate it in more formal terms later on.

\begin{figure}
   \begin{tikzpicture}[
     b/.style={ball color = gray,circle,inner sep = 0,minimum
       size=1mm},
     e/.style={line width=.5mm,line cap = rect}
     ]
     \begin{scope}
       \node[b] (a) at (0,0.25) {};
       \node[b] (b) at (1,0.25) {};
       \draw[e] (a)--(b);
       \node[anchor=center] at (2.5,.4) {$\xrightarrow{\text{2-blow-up}}$};
     \end{scope}

     \begin{scope}[xshift = 4cm]
       \node[b] (a) at (0,0) {};
       \node[b] (a') at (0,.5) {};
       \node[b] (b) at (1,0) {};
       \node[b] (b') at (1,.5) {};
       \draw[e] (a)--(b)--(a')--(b')--(a);
     \end{scope}

     \begin{scope}[xshift=8cm]
       \node[b] (x) at (90:.7){};
       \node[b] (y) at (210:.7){};
       \node[b] (z) at (330:.7){};
       \draw[e] (x)--(y)--(z)--(x);
             \node[anchor=center] at (2,.4) {$\xrightarrow{\text{2-blow-up}}$};
     \end{scope}

     \begin{scope}[xshift = 12cm]
     \node[b] (x) at (75:.7){};
     \node[b] (x') at (105:.7){};
     \node[b] (y) at (195:.7){};
     \node[b] (y') at (225:.7){};
     \node[b] (z) at (315:.7){};
     \node[b] (z') at (345:.7){};

     \draw[e] (x)--(y)--(x')--(y')--(x);
     \draw[e] (x)--(z)--(x')--(z')--(x);
     \draw[e] (y)--(z)--(y')--(z')--(y);
     \end{scope}

   \end{tikzpicture}
   \caption{The 2-blow-up of a graph is constructed by duplicating
     each vertex.} \label{fig:2-blow-up}
\end{figure}
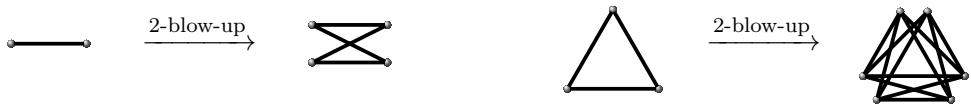

To see why this is a natural pseudorandomness hypothesis, we recall a famous result of Chung, Graham,
and Wilson \cite{CGW89}. This result says that several seemingly different notions of
pseudorandomness for dense graphs (i.e., graphs with constant edge
density) are equivalent. These notions are based, for example, on measuring eigenvalues, edge discrepancy, subgraph counts, or codegree
distributions. One rather striking fact is that having
the expected $4$-cycle count turns out to be equivalent to all of the other
definitions.

For sparse graphs, these equivalences do not hold. While having the correct count for $4$-cycles, which may be seen as the $2$-blow-up of an edge (see Figure~\ref{fig:2-blow-up}), still gives some control over the distribution of edges in the graph, this property is no longer strong enough to control the distribution of other small graphs such as triangles. This is where the pseudorandomness condition described above becomes useful, because knowing that we have approximately the correct count for $K_{2,2,2}$, the $2$-blow-up of a triangle, does allow one to control the distribution of triangles.

We now sketch the idea behind the proof of the relative Roth theorem. We begin by noting that Roth's theorem can be rephrased as follows.

\begin{theorem}[Roth] \label{thm:roth-delta}
  For every $\delta > 0$, every $A \subseteq
\ZZ_N$ with $\abs{A} \geq \delta N$ contains a $3$-AP, provided $N$ is
sufficiently large.
\end{theorem}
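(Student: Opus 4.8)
The plan is to derive this directly from Theorem~\ref{thm:roth}, since the two statements are just contrapositive reformulations of one another once the asymptotic notation is unwound. First I would make explicit that the assertion $\abs{A} = o(N)$ in Theorem~\ref{thm:roth} is meant uniformly over all $3$-AP-free subsets: writing $r(N)$ for the largest size of a subset of $\ZZ_N$ containing no nontrivial $3$-AP, Theorem~\ref{thm:roth} says precisely that $r(N)/N \to 0$ as $N \to \infty$.

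Given $\delta > 0$, that convergence supplies a threshold $N_0(\delta)$ with $r(N) < \delta N$ for every $N \geq N_0(\delta)$. Hence any $A \subseteq \ZZ_N$ with $N \geq N_0(\delta)$ and $\abs{A} \geq \delta N$ has size exceeding the extremal value $r(N)$, so by the definition of $r(N)$ the set $A$ cannot be free of nontrivial $3$-APs; that is, $A$ contains a $3$-AP with nonzero common difference, which is the conclusion. (Equivalently, one can argue by contradiction: a failure of the statement would produce a fixed $\delta > 0$ and an infinite sequence of moduli $N$ carrying nontrivial-$3$-AP-free sets of density at least $\delta$, contradicting $r(N) = o(N)$.)

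I do not expect any real obstacle here; the proof is a matter of bookkeeping. The one point requiring a moment's care is the standing convention that ``$3$-AP-free'' means free of \emph{nontrivial} progressions $a, a+d, a+2d$ with $d \neq 0$ (the degenerate progressions $a,a,a$ are always present), so that ``contains a $3$-AP'' in the statement of Theorem~\ref{thm:roth-delta} must be read in the same nondegenerate sense. With that understood, Theorem~\ref{thm:roth-delta} and Theorem~\ref{thm:roth} are literally equivalent.
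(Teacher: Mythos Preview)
Your proposal is correct and matches the paper's treatment: the paper does not give a separate proof of Theorem~\ref{thm:roth-delta} but simply presents it as a rephrasing of Theorem~\ref{thm:roth}, and your argument spells out precisely why these two formulations are equivalent via the contrapositive and the uniformity implicit in the $o(N)$ notation. The only thing to note is that the paper leaves this equivalence entirely to the reader, so your write-up is more explicit than what appears there.
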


By a simple averaging argument (attributed to Varnavides \cite{Var59}), this version of
Roth's theorem is equivalent to the claim that $A$ contains not just one, but many $3$-APs.

\begin{theorem}
 [Roth's theorem, counting version] \label{thm:roth-counting}
For every $\delta > 0$, there exists $c = c(\delta) > 0$
  such that every $A \subseteq
\ZZ_N$ with $\abs{A} \geq \delta N$ contains at least $cN^2$ $3$-APs, provided $N$ is
sufficiently large.
\end{theorem}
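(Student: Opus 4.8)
The plan is to run the classical averaging argument of Varnavides, deducing the counting version from the single-progression version (Theorem~\ref{thm:roth-delta}) by covering $\ZZ_N$ with short arithmetic progressions. Fix a constant $M = M(\delta)$, to be chosen below, and consider all length-$M$ APs $P = (a, a+d, \dots, a+(M-1)d)$ in $\ZZ_N$ with $d \neq 0$; there are $N(N-1)$ of these (counted with both orientations, which is harmless). The key point is that any such $P$ for which $\abs{A \cap P} \geq \tfrac{\delta}{2}M$ contains a genuine nontrivial $3$-AP of $A$: transporting $A \cap P$ back along the bijection $i \mapsto a+id$ gives a set $B \subseteq \{0,1,\dots,M-1\}$ with $\abs{B} \geq \tfrac{\delta}{2}M$, which we embed into $\ZZ_{M'}$ for a prime $M' \in (2M, 4M)$ (Bertrand's postulate). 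Choosing $M$ large enough that $M' \geq N_0(\delta/8)$, where $N_0$ is the threshold furnished by Theorem~\ref{thm:roth-delta}, we obtain a $3$-AP inside $B \subseteq \ZZ_{M'}$; since $M' > 2M$ it does not wrap around $0$, so it is a genuine $3$-AP $\{i, i+j, i+2j\}$ with $j \neq 0$ in $\{0,\dots,M-1\}$, which maps forward to the $3$-AP $\{a+id, a+(i+j)d, a+(i+2j)d\}$ in $A$. Note that $M$ depends on $\delta$ alone, so there is no circularity: Theorem~\ref{thm:roth-delta} is invoked only at the fixed density $\delta/8$.

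Call a length-$M$ AP $P$ \emph{good} if $\abs{A \cap P} \geq \tfrac{\delta}{2}M$. A double count produces a positive proportion of good $P$: summing $\abs{A \cap P}$ over all $P$ equals $\abs{A}$ times the number of $P$ through a fixed point, namely $M(N-1)$, so $\sum_P \abs{A \cap P} = \abs{A}\,M(N-1) \geq \delta M N(N-1)$. On the other hand, writing $g$ for the number of good $P$, we have $\sum_P \abs{A \cap P} \leq gM + \bigl(N(N-1)-g\bigr)\tfrac{\delta}{2}M$, and rearranging gives $g \geq \tfrac{\delta}{2}N(N-1)$. Finally, each nontrivial $3$-AP of $A$ is contained in at most $M^3$ of the APs $P$ (there are fewer than $M^2$ choices of a $3$-term index progression inside $\{0,\dots,M-1\}$, and for each of these the common difference $d$ is pinned down up to at most $M$ possibilities; a crude bound suffices). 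Hence the number of distinct nontrivial $3$-APs of $A$ is at least $g/M^3 \geq \tfrac{\delta}{2M^3}N(N-1) \geq c N^2$ with $c = c(\delta) := \delta/(4M^3)$ once $N$ is large, proving the theorem. The trivial $3$-APs with common difference zero only add to the count.

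The argument is entirely elementary double counting, so the only real care needed is the bookkeeping in passing between $\ZZ_N$ and $[M]$, and this is where I expect the friction to lie. Besides the wrap-around issue handled above by embedding into $\ZZ_{M'}$, one should note that when $N$ is composite a ``length-$M$ AP'' $(a, a+d, \dots, a+(M-1)d)$ can be degenerate as a set — precisely when $jd \equiv 0 \pmod N$ for some $1 \leq j \leq M-1$. The number of such $d$ is at most $\sum_{e=1}^{M-1}\phi(e) < M^2$, so the degenerate $P$ form a vanishing fraction of all $N(N-1)$ of them and may simply be discarded at the outset (the number of good $P$ through a fixed point decreases only by $O_M(1)$, which does not affect the bounds). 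After discarding them, $i \mapsto a+id$ is genuinely injective on $\{0,\dots,M-1\}$, so $j \neq 0$ and $d \neq 0$ really do force the resulting $3$-AP of $A$ to be nontrivial. This is the one step that looks routine but requires a moment's attention.
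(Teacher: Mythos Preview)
Your proof is correct and is precisely the Varnavides averaging argument that the paper alludes to but does not spell out; the paper merely states ``By a simple averaging argument (attributed to Varnavides~\cite{Var59}), this version of Roth's theorem is equivalent to the claim that $A$ contains not just one, but many $3$-APs'' and moves on. Your treatment, including the care taken over wrap-around and degenerate progressions in $\ZZ_N$, is exactly what is needed to make that allusion rigorous.
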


%To deduce the counting version of Roth's theorem from the first
%version, we use a sampling argument that goes roughly as follows. Fix
%an $N_0$ such that every $B \subseteq [N_0]$ with $\abs{B} \geq \delta
%N_0 /2$ contains a $3$-AP. Then we sample a random $N_0$-AP in $\ZZ_N$.
%This intersects $A$ in at least $\delta N_0$ elements on expectation
%and so it intersects $A$ in at least $\delta N_0/2$ elements with
%probability at least $\delta / 2$. Whenever this happens, we obtain
%a $3$-AP in $A$. On the other hand, every $3$-AP in $A$ is found in at
%most $N_0^2$ of all possible $N_0$-APs in $\ZZ_N$. It follows that
%$A$ has at least $cN^2$ $3$-APs.

To prove the relative Roth theorem from
Roth's theorem, assume that $A \subseteq S \subseteq \ZZ_N$ is such that
$\abs{A} \geq \delta \abs{S}$. The first step is to show that there is
a \emph{dense model} $\tA$ for $A$. This is a dense subset of
$\ZZ_N$ such that $\sabs{\tA}/N \approx |A|/|S| \geq \delta$
and $\tA$ approximates $A$ in the sense of a certain cut norm.
The second step is to use this cut norm condition to prove a counting
lemma, which says that $\tA$ and $A$ contain roughly the
same number of $3$-APs (after an appropriate normalization), i.e.,
\[
(N/\abs{S})^3 \abs{\{\text{$3$-APs in $A$}\}}
\approx
\sabs{\{\text{$3$-APs in $\tA$}\}}.
\]
Since the counting version of Roth's theorem implies that $\sabs{\{\text{$3$-APs in $\tA$}\}} \geq cN^2$, the relative Roth theorem is proved.

This discussion is fairly accurate, except for one white lie, which is that it is more correct to think of $\tA$ as a weighted function from $\ZZ_N$  to $[0,1]$ than as a subset of $\ZZ_N$. It will therefore be more convenient to work with the following weighted version of Roth's theorem. At this point, it is worth fixing some notation. We will write $\EE_{x_1 \in X_1, \dots, x_k \in X_k}$ as a shorthand for $|X_1|^{-1} \dots |X_k|^{-1} \sum_{x_1 \in X_1} \dots \sum_{x_k \in X_k}$. If the variables $x_1, \dots, x_k$ or the sets $X_1, \dots, X_k$ are understood, we will sometimes choose to omit them. We will also write $o(1)$ to indicate a function that tends to zero as $N$ tends to infinity, indicating further dependencies by subscripts when they are not understood.

\begin{theorem}
  [Roth's theorem, weighted version] \label{thm:roth-weighted}
 For every $\delta > 0$, there exists $c = c(\delta) > 0$
  such that every $f \colon \ZZ_N \to [0,1]$ with $\EE f \geq \delta$
  satisfies
  \begin{equation}\label{eq:roth-weighted}
    \EE_{x,d \in \ZZ_N}[f(x)f(x+d)f(x+2d)] \geq c - o_{\delta}(1).
  \end{equation}
\end{theorem}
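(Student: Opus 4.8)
The plan is to derive the weighted statement from the counting version of Roth's theorem (Theorem~\ref{thm:roth-counting}) by a standard ``reverse Markov'' thresholding argument: although $f$ is only a $[0,1]$-valued function rather than an indicator, its large super-level set is a genuinely dense subset of $\ZZ_N$, and every $3$-AP inside that set forces a definite contribution to the average in \eqref{eq:roth-weighted}.

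Concretely, fix $\delta > 0$ and put $A := \set{x \in \ZZ_N : f(x) \geq \delta/2}$. Since $0 \le f \le 1$ and $\EE f \ge \delta$, splitting the average over $A$ and its complement gives $\delta \le \EE f \le \abs{A}/N + \delta/2$, hence $\abs{A} \ge (\delta/2)N$. Applying Theorem~\ref{thm:roth-counting} with density parameter $\delta/2$ produces a constant $c_0 = c(\delta/2) > 0$ such that, once $N \ge N_0(\delta)$, there are at least $c_0 N^2$ pairs $(x,d) \in \ZZ_N \x \ZZ_N$ with $x$, $x+d$, $x+2d$ all in $A$.

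For each such pair we have $f(x), f(x+d), f(x+2d) \ge \delta/2$, so $f(x)f(x+d)f(x+2d) \ge \delta^3/8$; since every term of $\sum_{x,d} f(x)f(x+d)f(x+2d)$ is nonnegative, it follows that for $N \ge N_0(\delta)$
\[
\EE_{x,d \in \ZZ_N}[f(x)f(x+d)f(x+2d)] \ge \frac{1}{N^2}\cdot c_0 N^2 \cdot \frac{\delta^3}{8} = \frac{c_0\delta^3}{8} =: c(\delta).
\]
For the finitely many $N < N_0(\delta)$ the left-hand side of \eqref{eq:roth-weighted} is trivially $\ge 0 \ge c(\delta) - c(\delta)$, so choosing $o_\delta(1)$ to be the function equal to $c(\delta)$ for $N < N_0(\delta)$ and $0$ otherwise finishes the proof.

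I do not expect any real obstacle here: all the mathematical content is contained in Theorem~\ref{thm:roth-counting}, and the only points needing a little care are the bookkeeping of the density fed into the counting version (one must use $\delta/2$ to leave room for the thresholding loss, so that $c(\delta)$ comes out proportional to $c(\delta/2)\delta^3$) and the harmless observation that the trivial progressions ($d=0$) number only $N = o(N^2)$ and so cannot affect the count.
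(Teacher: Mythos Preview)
Your argument is correct and is essentially identical to the paper's own derivation: the paper also sets $A = \{x \in \ZZ_N : f(x) \geq \delta/2\}$, observes that $\abs{A} \geq \delta N/2$, bounds $\EE_{x,d}[f(x)f(x+d)f(x+2d)] \geq (\delta/2)^3 \EE_{x,d}[1_A(x)1_A(x+d)1_A(x+2d)]$, and invokes Theorem~\ref{thm:roth-counting}. Your handling of the small-$N$ regime via the $o_\delta(1)$ term is a bit more explicit than the paper's, but otherwise the proofs coincide.
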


Note that when $f$ is $\{0,1\}$-valued, i.e.,
$f = 1_A$ is the indicator function of some set $A$, this reduces to the counting
version of Roth's theorem. Up to a change of parameters, the counting version
also implies the weighted version. Indeed,
to deduce the weighted version from the counting version, let $A = \{x \in
  \ZZ_N \mid f(x) \geq \delta/2\}$. If $\EE f \geq \delta$ and $0 \leq
  f \leq 1$, then $\abs{A} \geq \delta N / 2$, so
\[
  \EE_{x,d \in \ZZ_N}[f(x)f(x+d)f(x+2d)] \geq (\delta/2)^3
  \EE_{x,d \in \ZZ_N}[1_A(x)1_A(x+d)1_A(x+2d)].
\]
By the counting version of Roth's theorem, this is bounded below by a positive constant when $N$ is sufficiently large.

When working in the functional setting, we also replace the set $S$ by a function $\nu \colon \ZZ_N \to
[0,\infty)$. This function $\nu$, which we call a
\emph{majorizing measure}, will be normalized to satisfy\footnote{We think of $\nu$ as
  a sequence of functions $\nu^{(N)}$, though we usually
  suppress the implicit dependence of $\nu$ on $N$.}
\[
\EE \nu = 1 + o(1).
\]
The subset $A \subseteq S$ will be replaced by some function
$f \colon \ZZ_N \to [0,\infty)$ majorized by $\nu$, that is, such that $0 \leq f(x)
\leq \nu(x)$ for all $x \in \ZZ_N$ (we write this as $0 \leq f \leq
\nu$). The hypothesis $\abs{A} \geq \delta |S|$ will be replaced
by $\EE f \geq \delta$. Note that $\nu$ and $f$ can be unbounded,
which is a major source of difficulty. The main motivating example to
bear in mind is that when $A \subseteq S \subseteq \ZZ_N$, we take
$\nu(x) = \frac{N}{\abs{S}} 1_S(x)$ and $f(x) = \nu(x)1_A(x)$, noting that
if $|A| \geq \delta |S|$ then $\EE f \geq \delta$.
We refer the reader to Table~\ref{tab:setting} for a summary of this correspondence.

\begin{table}
  \begin{tabular}{p{.1\textwidth}p{.15\textwidth}p{.22\textwidth}}
    \toprule
    & Sets &  Functions \\ \midrule
    Dense setting &
    $A \subseteq \ZZ_N$

    $\abs{A} \geq \delta N$
    &
    $f \colon \ZZ_N \to [0,1]$

    $\EE f \geq \delta$
    \\
    \midrule

    Sparse setting &

    $A \subseteq S \subseteq \ZZ_N$

    $\abs{A} \geq \delta \abs{S}$
    &
    $f \leq \nu \colon \ZZ_N \to [0,\infty)$

    $\EE f \geq \delta$, \ $\EE \nu = 1 + o(1)$ \\
    \bottomrule \\
  \end{tabular}
  \caption{Comparing the set version with the weighted version.} \label{tab:setting}
\end{table}

We can now state the pseudorandomness condition in a more formal way. We modify
the graph $G_S$ to a weighted graph $G_\nu$, which, for brevity, we usually
denote by $\nu$. This is a weighted
tripartite graph with vertex sets $X = Y = Z = \ZZ_N$ and edge
weights given by:
\begin{itemize}
\item $\nu_{XY}(x,y) = \nu(2x + y)$ for all $(x,y) \in X \x Y$;
\item $\nu_{XZ}(x,z) = \nu(x - z)$ for all $(x,z) \in X \x Z$;
\item $\nu_{YZ}(y,z) = \nu(-y-2z)$ for all $(y,z) \in Y \x Z$.
\end{itemize}
We will omit the subscripts if there is no risk of confusion. The
pseudorandomness condition then says that the weighted graph $\nu$ has
asymptotically the expected $H$-density for any subgraph $H$ of $K_{2,2,2}$. For example, triangle density in $\nu$
is given by the expression $\EE[\nu(x,y)\nu(x,z)\nu(y,z)]$, where
$x$, $y$, $z$ vary independently and uniformly over $X$, $Y$, $Z$, respectively.
The pseudorandomness assumption requires, amongst other things, that this triangle density
be $1 + o(1)$, the normalization having accounted for the other factors. The
full hypothesis, involving $K_{2,2,2}$ and its subgraphs, is stated below.

\begin{definition}[$3$-linear forms condition] \label{def:3-lfc}
  A weighted tripartite graph $\nu$ with vertex sets
  $X$, $Y$, and $Z$ satisfies the \emph{$3$-linear forms condition} if
  \begin{multline} \label{eq:3-lfc-graph}
    \EE_{x,x' \in X,\ y,y' \in Y,\ z,z' \in Z}
    [\nu(y,z) \nu(y',z) \nu(y,z') \nu(y',z')
    \nu(x,z) \nu(x',z) \nu(x,z') \nu(x',z') \\
    \cdot \nu(x,y) \nu(x',y) \nu(x,y') \nu(x',y')] = 1 + o(1)
  \end{multline}
  and also \eqref{eq:3-lfc-graph} holds when one or more of the twelve $\nu$
  factors in the expectation are erased.

  Similarly, a function $\nu \colon \ZZ_N \to [0,\infty)$ satisfies
  the \emph{$3$-linear forms condition}\footnote{We will
    assume that $N$ is odd, which simplifies the proof of
    Theorem~\ref{thm:rel-roth} without too much loss in generality. Theorem~\ref{thm:rel-roth} holds more generally without this additional
    assumption on $N$.} if
  \begin{multline} \label{eq:3-lfc}
    \EE_{x,x',y,y',z,z' \in \ZZ_N} [\nu(-y-2z) \nu(-y'-2z) \nu(-y-2z')
    \nu(-y'-2z')
    \nu(x-z) \nu(x'-z) \\ \cdot  \nu(x-z') \nu(x'-z')
    \nu(2x+y) \nu(2x'+y) \nu(2x+y') \nu(2x'+y')] = 1 + o(1)
  \end{multline}
  and also \eqref{eq:3-lfc} holds when one or more of the twelve $\nu$
  factors in the expectation are erased.
\end{definition}

\begin{remark}
The $3$-linear forms condition \eqref{eq:3-lfc} for a function $\nu: \ZZ_N \to [0, \infty)$ is precisely the same as \eqref{eq:3-lfc-graph} for the weighted graph $G_\nu$.
\end{remark}

We can now state the relative Roth theorem formally.

\begin{theorem} [Relative Roth]
  \label{thm:rel-roth}
  Suppose $\nu \colon \ZZ_N \to [0, \infty)$ satisfies the $3$-linear forms condition. For every $\delta > 0$, there exists $c = c(\delta) > 0$ such that every $f \colon \ZZ_N \to [0,\infty)$ with $0 \leq f \leq \nu$ and $\EE f \geq \delta$ satisfies
  \begin{equation*} \label{eq:rel-roth}
  \EE_{x,d\in\ZZ_N}[f(x)f(x+d)f(x + 2d)]
  \geq c - o_{\delta}(1).
\end{equation*}
Moreover, $c(\delta)$ may be taken to be the same constant which appears in
\eqref{eq:roth-weighted}.
\end{theorem}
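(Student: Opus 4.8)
The plan is to carry out the transference strategy from the introduction in two steps --- a \emph{dense model theorem} followed by a \emph{counting lemma} --- and then feed the output into the weighted Roth theorem. It is convenient to work with the weighted tripartite graph $G_f$ on vertex classes $X=Y=Z=\ZZ_N$ with edge weights $f(2x+y)$, $f(x-z)$, $f(-y-2z)$, and likewise $G_\nu$, which satisfies the $3$-linear forms condition by the Remark. Since the linear map $(x,y,z)\mapsto(2x+y,\,-x-y-z)$ from $\ZZ_N^3$ onto $\ZZ_N^2$ has all fibres of the same size,
\[
\EE_{x,d\in\ZZ_N}[f(x)f(x+d)f(x+2d)] = \EE_{x,y,z}[f(2x+y)\,f(x-z)\,f(-y-2z)],
\]
so the quantity to be bounded below is exactly the weighted triangle density of $G_f$, and the same identity holds for $\tf$.

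\textbf{Step 1: dense model.} Invoking the dense model theorem (Section~\ref{sec:dense-model}), I would produce $\tf\colon\ZZ_N\to[0,1]$ with $\EE\tf\ge\EE f-o(1)\ge\delta-o(1)$ and $\snorm{f-\tf}_\square=o(1)$, where $\snorm{\cdot}_\square$ is the cut norm adapted to the three linear forms $2x+y$, $x-z$, $-y-2z$ appearing in $G_\nu$ (so that, for instance, $\snorm{g}_\square\ge\sabs{\EE_{x,z}[g(x-z)u(x)v(z)]}$ for all $u,v\colon\ZZ_N\to[-1,1]$, and similarly for the other two forms). The only property of $\nu$ needed is that $\EE[\nu\cdot(\text{bounded-complexity product of cut functions})]$ is close to what it would be with $\nu$ replaced by $1$, a consequence of (a part of) the $3$-linear forms condition; the Gowers / Reingold--Trevisan--Tulsiani--Vadhan construction then builds $\tf$ as a bounded function of finitely many cut functions.

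\textbf{Step 2: counting lemma.} Next I would apply the counting lemma (Section~\ref{sec:count}) to obtain
\[
\EE_{x,y,z}[f(2x+y)f(x-z)f(-y-2z)] = \EE_{x,y,z}[\tf(2x+y)\tf(x-z)\tf(-y-2z)] + o(1).
\]
The mechanism is a telescoping identity expressing the difference of the two triangle densities as a sum of three terms, in each of which one edge factor is $f-\tf$ and the other two are bounded either by $1$ (factors of $\tf$) or by $\nu$ (factors of $f$). Were all the other factors bounded by $1$, each term would be $o(1)$ directly from $\snorm{f-\tf}_\square=o(1)$; the real content is to handle the factors bounded only by the unbounded weight $\nu$. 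Here one uses the $3$-linear forms condition, via a densification together with a sequence of Cauchy--Schwarz steps (as in \cite{CFZrelsz}), to show that the $\nu$-weighted analogue of the cut norm of $f-\tf$ is controlled by its ordinary cut norm up to $o(1)$. I expect this to be the main obstacle: the telescoping itself is routine, but absorbing the weights requires the full strength of the linear forms condition and a careful Cauchy--Schwarz argument that never loses track of the $o(1)$ factors.

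\textbf{Step 3: conclusion.} Since $\tf$ maps into $[0,1]$ and $\EE\tf\ge\delta-o(1)$, the weighted Roth theorem (Theorem~\ref{thm:roth-weighted}) gives $\EE_{x,d}[\tf(x)\tf(x+d)\tf(x+2d)]\ge c(\delta)-o_\delta(1)$, the $o(1)$ deficiency in the density of $\tf$ being absorbed into the error term (for instance by first replacing $\delta$ with $\delta-\eta$ for an arbitrary $\eta>0$ and then letting $\eta\to0$). Combining this with Step 2 and the identity from the reduction yields $\EE_{x,d}[f(x)f(x+d)f(x+2d)]\ge c(\delta)-o_\delta(1)$ with the same constant $c(\delta)$ as in \eqref{eq:roth-weighted}, which is the asserted inequality together with the ``moreover''.
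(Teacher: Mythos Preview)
Your proposal is correct and follows essentially the same route as the paper: deduce $\snorm{\nu-1}_{\square,2}=o(1)$ from the $3$-linear forms condition, apply the dense model theorem (Theorem~\ref{thm:dense-model}) to obtain $\tf$, then use the relative triangle counting lemma (Theorem~\ref{thm:tri-count-sparse}) and finish with Theorem~\ref{thm:roth-weighted}. One small organizational difference: the paper's counting lemma does not proceed by a single three-term telescope with $\nu$-bounded factors handled termwise, but rather by the two-term split $\EE[g g'-\tg\tg']=\EE[g(g'-\tg')]+\EE[(g-\tg)\tg']$ (with $g'(x,y)=\EE_z[g(x,z)g(y,z)]$), followed by Cauchy--Schwarz, the strong linear forms lemma, and an induction on the number of non-trivial $\nu$-pieces; but the ingredients you name (densification, Cauchy--Schwarz, linear forms condition) are exactly the ones used.
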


\begin{remark}The rate at which the $o(1)$ term in \eqref{eq:rel-roth} goes
to zero depends not only on $\delta$ but also on the rate of
convergence in the $3$-linear forms condition.
\end{remark}

\section{Relative Szemer\'edi theorem} \label{sec:rel-sz}

As in the case of Roth's theorem, we first state an equivalent version
of Szemer\'edi's theorem allowing weights.

\begin{theorem}
  [Szemer\'edi's theorem, weighted version]
  \label{thm:sz-weighted}
  For every $k \geq 3$ and $\delta > 0$, there exists $c = c(k, \delta) > 0$ such that every $f \colon \ZZ_N \to [0,1]$
  with $\EE f \geq \delta$ satisfies
  \begin{equation} \label{eq:sz-weighted}
    \EE_{x,d \in \ZZ_N}[ f(x)f(x+d)f(x + 2d) \cdots f(x+(k-1)d)]
    \geq c - o_{k,\delta}(1).
  \end{equation}
\end{theorem}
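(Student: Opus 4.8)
The plan is to deduce Theorem~\ref{thm:sz-weighted} from Szemer\'edi's theorem (Theorem~\ref{thm:sz}), used purely as a black box, by the same two-step route employed for Roth's theorem in Section~\ref{sec:rel-roth}: first upgrade the existence of a single $k$-AP to a positive proportion of $k$-APs via a Varnavides-type averaging argument, and then pass from dense sets to $[0,1]$-valued functions by thresholding.

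For the first step I would start from the finitary form of Theorem~\ref{thm:sz}, equivalent to it as noted in Section~\ref{sec:roth}: for every $k$ and every $\eta > 0$ there is an $M_0 = M_0(k,\eta)$ so that every subset of $[M]$ of density at least $\eta$ contains a (nontrivial) $k$-AP whenever $M \ge M_0$. The Varnavides argument then yields a counting version: for every $\delta > 0$ there is $c_1 = c_1(k,\delta) > 0$ so that every $A \subseteq \ZZ_N$ with $|A| \ge \delta N$ contains at least $c_1 N^2$ $k$-APs once $N$ is large. (It is convenient to assume $N$ prime here; the general case follows by first embedding $[N]$ into a slightly larger cyclic group of prime order, as in Section~\ref{sec:roth}.) To prove the counting version, set $M = M_0(k, \delta/2)$ and average over the length-$M$ progressions $P_{a,r} = \{a, a+r, \dots, a+(M-1)r\}$ with $a \in \ZZ_N$ and $r \in \ZZ_N \setminus \{0\}$; for $N$ prime each $P_{a,r}$ consists of $M$ distinct elements and is an isomorphic copy of $[M]$. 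Since every term $a + ir$ is uniform in $\ZZ_N$ as $(a,r)$ varies uniformly, $\EE_{a,r}\,|A \cap P_{a,r}| = M |A|/N \ge \delta M$, and since $|A \cap P_{a,r}| \le M$ always, at least a $\delta/2$ fraction of the pairs $(a,r)$ must satisfy $|A \cap P_{a,r}| \ge (\delta/2) M$; for each such ``good'' pair the finitary theorem produces a nontrivial $k$-AP inside $A \cap P_{a,r}$, hence inside $A$. A double count finishes it: a fixed nontrivial $k$-AP of $\ZZ_N$ occurs as a sub-progression of at most $O_{k,M}(1)$ of the $P_{a,r}$ (its index-step $s \ge 1$ and starting position inside $P_{a,r}$ determine $r = e s^{-1}$ and then $a$, so there are at most $M^2$ possibilities), whence dividing the $\ge (\delta/2) N(N-1)$ good pairs by $O_{k,M}(1)$ gives the claimed $c_1 N^2$ lower bound.

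The thresholding step is then routine and identical to the $k = 3$ case treated in the excerpt: given $f \colon \ZZ_N \to [0,1]$ with $\EE f \ge \delta$, set $A = \{x : f(x) \ge \delta/2\}$, so that $\delta \le \EE f \le \delta/2 + |A|/N$ forces $|A| \ge (\delta/2) N$, and on each of the $k$-APs counted above (applied with $\delta/2$ in place of $\delta$) every factor $f(x+id)$ is at least $\delta/2$, giving
\[
\EE_{x,d \in \ZZ_N}[f(x) f(x+d) \cdots f(x+(k-1)d)] \ \ge\ (\delta/2)^k\, c_1(k, \delta/2)
\]
for $N$ large. Taking $c(k,\delta) := (\delta/2)^k c_1(k, \delta/2)$ and absorbing the ``$N$ sufficiently large'' proviso into the error term yields \eqref{eq:sz-weighted}. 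The only real content is the counting step, and the point that needs the most care there is the bookkeeping for progressions in $\ZZ_N$ — arranging that the $P_{a,r}$ have distinct elements and that each $k$-AP of $\ZZ_N$ is covered a bounded number of times — which is precisely why it is cleanest to pass to a prime modulus first. (One could equally prove the counting version directly in $[N]$ and then transfer to $\ZZ_N$; the two approaches are interchangeable.)
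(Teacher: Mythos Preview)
Your proposal is correct and follows exactly the route the paper indicates: the paper states Theorem~\ref{thm:sz-weighted} without a standalone proof, pointing back to the $k=3$ discussion in Section~\ref{sec:rel-roth}, where it cites Varnavides for the counting version and then gives the same thresholding argument (with $A = \{x : f(x) \ge \delta/2\}$) that you reproduce. If anything, you give more detail on the Varnavides averaging step than the paper does.
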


The setup for the relative Szemer\'edi theorem is a natural extension of
the previous section. Just as our pseudorandomness condition for $3$-APs
was related to the graph-theoretic approach to Roth's theorem, the pseudorandomness
condition in the general case is informed by the hypergraph removal approach to
Szemer\'edi's theorem~\cite{Gow07, NRS06, RS04, RS06, Tao06jcta}.

Instead of constructing a weighted graph as we
did for $3$-APs, we now construct a weighted $(k-1)$-uniform hypergraph
corresponding to $k$-APs. For example, for $4$-APs, the $3$-uniform
hypergraph corresponding to the majorizing measure $\nu \colon \ZZ_N
\to [0,\infty)$ is $4$-partite, with vertex sets $W, X, Y, Z$, each with
$N$ vertices labeled by elements of $\ZZ_N$. The weighted edges are
given by:
\begin{itemize}
\item $\nu_{WXY}(w,x,y) = \nu(3w+2x+y)$ on $W \x X \x Y$;
\item $\nu_{WXZ}(w,x,z) = \nu(2w+x-z)$ on $W \x X \x Z$;
\item $\nu_{WYZ}(w,y,z) = \nu(w-y-2z)$ on $W \x Y \x Z$;
\item $\nu_{XYZ}(x,y,z) = \nu(-x-2y-3z)$ on $X \x Y \x Z$.
\end{itemize}
The linear forms $3w+2x+y, 2w+x-z, w-y-2z, -x-2y-3z$ are chosen
because they form a 4-AP with common difference $-w-x-y-z$ and each
linear form depends on exactly three of the four variables. The
pseudorandomness condition then says that the weighted hypergraph $\nu$
contains asymptotically the expected count of $H$ whenever $H$ is a
subgraph of the $2$-blow-up of the simplex $K_4^{(3)}$. Here $K_4^{(3)}$
is the complete $3$-uniform hypergraph on $4$ vertices, that is, with vertices
$\{w,x,y,z\}$ and edges $\{wxy,wxz,wyz,xyz\}$, while the $2$-blow-up of $K_4^{(3)}$ is the
$3$-uniform hypergraph constructed by duplicating each vertex in $K_4^{(3)}$ and joining
all those triples which correspond to edges in $K_4^{(3)}$. Explicitly, this $2$-blow-up has
vertex set $\{w_1, w_2, x_1, x_2, y_1, y_2, z_1, z_2\}$ and edges
$w_i x_j y_k, w_i x_j z_k, w_i y_j z_k, x_i y_j z_k$ for all $i, j, k \in \{1, 2\}$.

For general $k$, we are concerned with $K_k^{(k-1)}$, the complete $(k-1)$-uniform hypergraph on $k$ vertices, while the
pseudorandomness condition again asks that a certain weighted $k$-partite $(k-1)$-uniform hypergraph contains
asymptotically the expected count for every subgraph of the $2$-blow-up of $K_k^{(k-1)}$. This $2$-blow-up is
constructed analogously to the $2$-blow-up of $K_4^{(3)}$ above and has $2k$ vertices and $k 2^{k-1}$ edges.

For $k$-APs, the corresponding linear forms are given by the expressions $\sum_{i=1}^k
(j-i) x_i$, for each $j = k, k-1, \dots, 1$. The condition~\eqref{eq:Z-lfc} below is now the natural
extension of the $3$-linear forms condition~\eqref{eq:3-lfc}. When viewed as a hypergraph
condition, it asks that the count for any subgraph of the $2$-blow-up of $K_k^{(k-1)}$ be
close to the expected count.

\begin{definition}[Linear forms condition]
  \label{def:Z-lfc}
  A function $\nu : \ZZ_N \to [0,\infty)$ satisfies the
  \emph{$k$-linear forms condition}\footnote{As in the footnote to
    Definition~\ref{def:3-lfc}, in our proof of
    Theorem~\ref{thm:rel-sz} we will make the simplifying assumption
    that $N$ is coprime to
    $(k-1)!$. In the proof of the Green-Tao
    theorem, one can always make this assumption.} if
  \begin{equation}
    \label{eq:Z-lfc}
    \EE_{x_1^{(0)}, x_1^{(1)}, \dots, x_k^{(0)},x_k^{(1)} \in \ZZ_N} \Bigl[
    \prod_{j=1}^k \prod_{\omega \in \{0,1\}^{[k]\setminus\{j\}}} \nu
    \Bigl( \sum_{i=1}^k (j-i) x_i^{(\omega_i)} \Bigr)^{n_{j,\omega}}
    \Bigr]  = 1 + o(1)
  \end{equation}
  for any choice of exponents $n_{j,\omega} \in \{0,1\}$.
\end{definition}

Now we are ready to state the main result in the proof of the
Green-Tao theorem.

\begin{theorem}
  [Relative Szemer\'edi]
  \label{thm:rel-sz}
  Suppose $k \geq 3$ and $\nu \colon \ZZ_N \to [0, \infty)$ satisfies the $k$-linear forms condition. For every $\delta > 0$, there exists $c = c(k, \delta) > 0$ such that every $f \colon \ZZ_N \to [0,\infty)$ with $0 \leq f \leq \nu$ and $\EE f \geq \delta$ satisfies
  \begin{equation} \label{eq:rel-sz}
  \EE_{x,d \in \ZZ_N}[ f(x)f(x+d)f(x + 2d) \cdots f(x+(k-1)d)]
  \geq c - o_{k, \delta}(1).
\end{equation}
Moreover, $c(k,\delta)$ may be taken to be the same constant which appears in
\eqref{eq:sz-weighted}.
\end{theorem}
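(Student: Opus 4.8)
The plan is to reduce Theorem~\ref{thm:rel-sz} to the two main ingredients outlined in the introduction: the dense model theorem and the counting lemma. Concretely, given $\nu$ satisfying the $k$-linear forms condition and $f$ with $0 \le f \le \nu$ and $\EE f \ge \delta$, I would first invoke the dense model theorem to produce a function $\tf \colon \ZZ_N \to [0,1]$ with $\EE \tf \ge \delta - o(1)$ that is close to $f$ in the relevant cut-type norm (the one governing weighted hypergraph counts of subgraphs of the $2$-blow-up of $K_k^{(k-1)}$). The hypothesis needed to run the dense model theorem is a consequence of the $k$-linear forms condition on $\nu$ — in particular one needs $\nu$ to be close to $1$ in a suitable dual norm, which follows by a Cauchy--Schwarz / tensor-power argument from the linear forms bounds. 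Next I would apply the counting lemma to conclude that the weighted $k$-AP count of $f$, namely $\EE_{x,d}[f(x) \cdots f(x+(k-1)d)]$, differs from the corresponding count for $\tf$ by at most $o_{k,\delta}(1)$; here it is essential that $f$ is majorized by $\nu$ and that $\nu$ obeys the full linear forms condition, since the error terms in the counting lemma are controlled by the difference $f - \tf$ in cut norm multiplied by factors that are themselves bounded using the linear forms condition on $\nu$.

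Having transferred the problem to $\tf$, I would then apply the weighted version of Szemer\'edi's theorem (Theorem~\ref{thm:sz-weighted}) to $\tf$, which is an honest $[0,1]$-valued function with $\EE \tf \ge \delta - o(1) \ge \delta/2$ for $N$ large. This yields $\EE_{x,d}[\tf(x)\tf(x+d)\cdots \tf(x+(k-1)d)] \ge c(k,\delta/2) - o_{k,\delta}(1)$, and then chaining back through the counting lemma gives the desired lower bound for $f$. To get the sharper claim that $c(k,\delta)$ may be taken to be the \emph{same} constant as in \eqref{eq:sz-weighted}, rather than $c(k,\delta/2)$, one has to be slightly more careful: instead of passing to the crude truncation, one uses the dense model $\tf$ directly, noting that $\EE \tf = \EE f + o(1) \ge \delta - o(1)$, so that Theorem~\ref{thm:sz-weighted} applies with parameter $\delta - o(1)$ and, by continuity of the constant or by absorbing the loss into the $o_{k,\delta}(1)$ term, one recovers exactly $c(k,\delta)$.

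The main obstacle is the interface between the dense model theorem and the counting lemma: one must verify that the cut norm in which the dense model theorem guarantees approximation is precisely the norm in which the counting lemma consumes that approximation, and that all the auxiliary $\nu$-dependent quantities appearing in the counting lemma's error analysis (the counts of proper subgraphs of the $2$-blow-up of $K_k^{(k-1)}$ in the weighted hypergraph built from $\nu$) are indeed $1 + o(1)$ by the linear forms condition. This is exactly the place where the weakened linear forms condition of \cite{CFZrelsz} must be shown to suffice, and where the bulk of the technical work lies. The remaining steps — deducing the weighted Szemer\'edi theorem from its set version, and the bookkeeping of the $o(1)$ terms — are routine, with the rates of convergence depending, as noted in the remark following Theorem~\ref{thm:rel-roth}, on the rate of convergence in the $k$-linear forms condition as well as on $k$ and $\delta$.
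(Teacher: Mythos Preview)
Your proposal is correct and follows essentially the same approach as the paper: deduce $\snorm{\nu-1}_{\square,k-1}=o(1)$ from the linear forms condition via Cauchy--Schwarz, apply the dense model theorem to obtain $\tf\colon\ZZ_N\to[0,1]$ with $\snorm{f-\tf}_{\square,k-1}=o(1)$, pass to the associated weighted $(k-1)$-uniform $k$-partite hypergraphs and invoke the relative simplex counting lemma, then apply the weighted Szemer\'edi theorem to $\tf$. The paper carries out exactly this outline, with the ``interface'' issue you flag resolved by checking (via a change of variables, using that $N$ is coprime to $(k-1)!$) that the hypergraph cut norm of $g_{-j}-\tg_{-j}$ equals $\snorm{f-\tf}_{\square,k-1}$; your observation that $\EE\tf\ge\delta-o(1)$ lets one recover the same constant $c(k,\delta)$ is also how the paper (implicitly) handles that point.
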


\begin{remark}The rate at which the $o(1)$ term in \eqref{eq:rel-sz} goes
to zero depends not only on $k$ and $\delta$ but also on the rate of
convergence in the $k$-linear forms condition for $\nu$.
\end{remark}

Now we outline the proof of the relative Szemer\'edi theorem. This is
simply a rephrasing of the outline given after
Theorem~\ref{thm:roth-counting} for the unweighted version of the relative
Roth theorem. We start with $0 \leq f \leq \nu$ and $\EE f \geq
\delta$. In Section \ref{sec:dense-model}, we prove a \emph{dense model
  theorem} which shows that there exists another function $\tf \colon
\ZZ_N \to [0,1]$ which approximates $f$ with respect to a certain cut
norm.\footnote{In the original Green-Tao approach, they required
  $\tf$ and $f$ to be close in a stronger sense related to the Gowers
  uniformity norm. The cut norm approach we present
  here requires less stringent pseudorandomness hypotheses for applying the
  dense model theorem but a stronger counting lemma.} Note that $\tf$ is bounded (hence ``dense'' model) and $\EE \tf \geq \delta - o(1)$. In
Section \ref{sec:count}, we establish a \emph{counting lemma} which says that
the weighted $k$-AP counts in $f$ and $\tf$ are similar, that is,
\begin{equation*} \label{eq:outline-Z-counting}
  \EE_{x,d}[f(x)f(x+d)\cdots f(x+(k-1)d)]
  =
  \EE_{x,d}[\tf(x)\tf(x+d)\cdots \tf(x+(k-1)d)] - o(1).
\end{equation*}
The right-hand side is at least $c(k,\delta) - o_{k, \delta}(1)$ by Szemer\'edi's theorem
(Theorem~\ref{thm:sz-weighted}). Thus the relative Szemer\'edi
theorem follows. We now begin the proof proper.

\section{Dense model theorem} \label{sec:dense-model}

Given $g \colon X \x Y \to \RR$, viewed as an edge-weighted bipartite
graph with vertex set $X \cup Y$, the \emph{cut norm} of
$g$, introduced by Frieze and Kannan~\cite{FK99} (also see
\cite[Chapter 8]{L12}), is defined as
\begin{equation} \label{eq:graph-cut}
\norm{g}_\square := \sup_{A \subseteq X, B \subseteq Y} \abs{\EE_{x
    \in X, y \in Y} [g(x,y) 1_A(x)1_B(y)]}.
\end{equation}
For a weighted 3-uniform hypergraph $g \colon X \x Y \x Z \to \RR$, we define
\[
\norm{g}_\square := \sup_{A \subseteq Y \x Z,\ B \subseteq X \x Z,\ C
  \subseteq X \x Y} \abs{\EE_{x \in X, y \in Y, z \in Z} [g(x,y,z) 1_A(y,z)1_B(x,z)1_C(x,y)]}.
\]
(The more obvious alternative, where we range $A, B, C$ over subsets
of $X, Y, Z$, respectively, gives a weaker norm that is not sufficient
to guarantee a counting lemma.) More generally, given a weighted $r$-uniform
hypergraph $g \colon X_1 \x \cdots \x X_r \to \RR$, define
\begin{equation} \label{eq:hyp-cut-norm}
\norm{g}_\square := \sup \abs{\EE_{x_1 \in X_1, \dots, x_r \in X_r} [
  g(x_1, \dots, x_r) 1_{A_1}(x_{-1}) 1_{A_2}(x_{-2}) \cdots 1_{A_r} (x_{-r})]},
\end{equation}
where the supremum is taken over all choices of subsets $A_i \subseteq
X_{-i} := \prod_{j\in[r] \setminus \{i\}} X_j$, $i  \in [r]$, and we write
\[
x_{-i} := (x_1, x_2, \dots, x_{i-1}, x_{i+1}, \dots, x_r) \in X_{-i}
\]
for each $i$. We extend this definition of cut norm to $\ZZ_N$: for
any function $f \colon \ZZ_N \to \RR$, define
\begin{equation} \label{eq:Z-cut-norm} \norm{f}_{\square,r}:= \sup
  \abs{\EE_{x_1, \dots, x_r \in \ZZ_N} [f(x_1 + \cdots + x_r)
    1_{A_1}(x_{-1}) 1_{A_2}(x_{-2}) \cdots 1_{A_r} (x_{-r})]},
\end{equation}
where the supremum is taken over all $A_1, \dots, A_r \subseteq \ZZ_N^{r-1}$.
It is easy to see that this is a norm. Equivalently, it is the
hypergraph cut norm applied to the weighted $r$-uniform hypergraph $g \colon
\ZZ_N^r \to \RR$ with $g(x_1, \dots, x_r) = f(x_1 + \cdots +
x_r)$. For example,
\[
\norm{f}_{\square, 2} := \sup_{A, B \subseteq \ZZ_N}
\abs{\EE_{x,y\in\ZZ_N} [ f(x + y) 1_A(x)1_B(y)]}.
\]

The main result of this section is the following dense model theorem
(in this particular form due to the third author \cite{Zhao14}). It gives a condition under which it is possible to
approximate an unbounded (or sparse) function $f$ by a bounded
(or dense) function $\tf$.

\begin{theorem}[Dense model] \label{thm:dense-model}
  For every $\e > 0$, there exists an $\e' >
  0$ such that the following holds. Suppose $\nu \colon \ZZ_N \to [0,
  \infty)$ satisfies $\snorm{\nu - 1}_{\square,r} \leq \e'$. Then, for
  every $f \colon \ZZ_N \to [0,\infty)$ with $f \leq \nu$, there exists a function $\tf \colon \ZZ_N \to
  [0,1]$ such that $\snorm{f - \tf}_{\square,r} \leq \e$.
\end{theorem}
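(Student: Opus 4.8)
The plan is to prove the contrapositive via a Hahn--Banach / linear-programming duality argument, in the style of Gowers and of Reingold--Trevisan--Tulsiani--Vadhan. Fix $\e > 0$ and suppose, for contradiction, that no dense model $\tf$ exists. The set of candidate models is $K := \{g \colon \ZZ_N \to [0,1]\}$, a convex, compact subset of the finite-dimensional space $\RR^{\ZZ_N}$. If $\snorm{f - g}_{\square, r} > \e$ for every $g \in K$, then $f$ is separated from $K$ in the $\norm{\cdot}_{\square,r}$-ball of radius $\e$. The cut norm is itself a supremum of linear functionals (indexed by the tuples of sets $A_1, \dots, A_r$), so $\norm{h}_{\square, r} = \sup_{\phi \in \Phi} \ang{h, \phi}$ where $\Phi$ is the (symmetric, convex-hull-closed) family of "cut functions" $\phi(x) = \EE_{x_1 + \cdots + x_r = x, \text{ appropriately}}[1_{A_1}(x_{-1}) \cdots 1_{A_r}(x_{-r})]$. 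Hence the separation hypothesis says: for every $g \in K$ there is $\phi_g$ in the convex hull of $\Phi$ with $\ang{f - g, \phi_g} > \e$. A minimax / compactness argument (Sion's theorem, or just that $K$ and $\mathrm{conv}(\Phi)$ are compact convex) then produces a \emph{single} $\phi$ in the convex hull of the cut functions such that $\ang{f, \phi} - \e > \sup_{g \in K} \ang{g, \phi} = \ang{1, \phi^+}$, where $\phi^+ = \max(\phi, 0)$ (since the best $g \in [0,1]^{\ZZ_N}$ puts $g = 1$ exactly where $\phi > 0$).

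Next I would convert this separating functional into a \emph{bounded test function}. Replacing $\phi$ by the thresholded function $\psi := \one_{\phi > 0}$, one checks (this is the one genuinely clever step, and it uses $0 \le f \le \nu$) that
\[
\ang{f, \psi} \ge \ang{f, \phi} - \ang{\nu, \phi \one_{\phi \le \ldots}} \quad\text{type estimate},
\]
more precisely that $\psi$ can be taken $\{0,1\}$-valued with $\ang{f - 1, \psi} > \e/2$ (say) while $\psi$ is still built from cut functions, so that $\ang{\nu - 1, \psi}$ is controlled by $\snorm{\nu - 1}_{\square, r}$. The key inequalities are: $\ang{1, \psi} \ge \ang{g, \psi}$ trivially for $g \in K$; and $\ang{f, \psi} \ge \ang{f, \phi}$ is \emph{false} in general, so instead one argues that since $f \le \nu$ and $\phi \le \psi$ on $\{\phi > 0\}$ while $\phi \le 0 \le \psi$ elsewhere, $\ang{f, \psi} \ge \ang{f, \phi} - \ang{\nu - 1, (\psi - \phi)^+} - \ldots$; combined with $\ang{1, \phi^+} = \ang{1, \phi \psi}$ and a short manipulation this yields $\ang{f, \psi} - \ang{1, \psi} \ge \e - O(\snorm{\nu - 1}_{\square, r})$. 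Rewriting $\ang{f, \psi} - \ang{1, \psi} = \ang{f - \nu, \psi} + \ang{\nu - 1, \psi} \le 0 + \snorm{\nu - 1}_{\square, r}$ (using $f \le \nu$, $\psi \ge 0$, and that $\psi$ lies in the convex hull of cut indicators so $\ang{\nu - 1, \psi} \le \snorm{\nu - 1}_{\square, r}$) gives $\e - O(\e') \le \e'$, a contradiction once $\e'$ is chosen small enough in terms of $\e$.

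I expect the main obstacle to be the middle step: massaging the real-valued separating functional $\phi$ (a convex combination of cut functions, taking values of both signs) into a genuine $\{0,1\}$-valued test function $\psi$ that (i) still witnesses the separation up to a constant factor loss, and (ii) still belongs to the convex hull of cut indicators so that $\snorm{\nu - 1}_{\square, r}$ can be applied to it. This is exactly the point where the hypothesis $f \le \nu$ (rather than merely $\EE f$ bounded) is essential, and getting the bookkeeping of positive and negative parts right — so that the error terms are all of the form $\ang{\nu - 1, (\text{cut function})} \le \snorm{\nu - 1}_{\square, r}$ — is the crux. The compactness/minimax step and the final arithmetic are routine by comparison. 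One should also record at the outset the elementary fact that $\norm{\cdot}_{\square, r}$ is indeed a norm and that the associated dual unit ball is the closed convex symmetric hull of the cut functions, which is what legitimizes writing the separation as a single linear functional.
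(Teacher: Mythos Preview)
Your overall architecture (separating hyperplane, then test the separation against $\nu-1$) matches the paper, but the core step is missing. After you obtain a separating functional $\phi$ in the dual unit ball, you propose to replace it by $\psi := 1_{\phi>0}$ and assert that $\psi$ ``still belongs to the convex hull of cut indicators so that $\snorm{\nu-1}_{\square,r}$ can be applied to it.'' That assertion is unjustified and in general false: the level set $\{\phi>0\}$ of a convex combination of generalized convolutions $(1_{A_1},\dots,1_{A_r})^*$ need not itself be expressible in that form, and thresholding does not preserve membership in the dual unit ball. Without this, the inequality $\ang{\nu-1,\psi}\le \snorm{\nu-1}_{\square,r}$ has no justification, and your contradiction collapses. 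The vague ``type estimate'' you write down for $\ang{f,\psi}$ versus $\ang{f,\phi}$ does not rescue this, because every error term you want to bound is of the form $\ang{\nu-1,\text{something}}$, and you have no control over that ``something'' in the dual norm.

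The paper's mechanism is genuinely different here. It does \emph{not} try to produce a $\{0,1\}$-valued test function. Instead it first observes (its Lemma~\ref{lem:mul-closed}) that the set $\Phi_r$ of generalized convolutions is \emph{closed under pointwise multiplication}; hence so is the dual unit ball, and one gets $\norm{\varphi\psi}^*\le\norm{\varphi}^*\norm{\psi}^*$. From the separation one extracts $\norm{\psi}^*\le 2/\e$ and $\norm{\psi}_\infty\le 2/\e$. Then, rather than thresholding, one approximates $x\mapsto x_+$ on $[-2/\e,2/\e]$ by a polynomial $P$ (Weierstrass), so that $P\psi\approx\psi_+$ pointwise, while the multiplicative closure gives $\norm{P\psi}^*\le R$ for an explicit constant $R$ depending only on $\e$. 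This is what lets one write $\sabs{\ang{\nu-1,P\psi}}\le \e' R$ and push the chain of inequalities through to a contradiction. The polynomial-approximation-plus-multiplicative-closure idea is the crux you are missing; the bookkeeping you flagged as the main obstacle is, by comparison, routine once this is in place.
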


\begin{remark} One may take $\e' = \exp(-\e^{-C})$ where $C$ is some
    absolute constant (independent of $r$ and, more importantly,
    $N$).
  \end{remark}

 A more involved dense model theorem (using a norm based on the
  Gowers uniformity norm rather than the cut norm) was used by Green and Tao in
  \cite{GT08}. Its proof was subsequently simplified
  by Gowers \cite{Gow10} and, independently, Reingold, Trevisan, Tulsiani, and Vadhan \cite{RTTVnote}. Here we follow
  Gowers' approach, but specialized to $\norm{\cdot}_{\square,r}$, which simplifies the exposition.

It will be useful to rewrite
$\EE_{x,y} [f(x+y)1_A(x) 1_B(y)]$
in the form $\ang{f,\varphi} = \EE_x [f(x)\varphi(x)]$ for some
$\varphi \colon \ZZ_N \to \RR$. We have, by a change of variable,
\[
\EE_{x,y} [f(x+y)1_A(x) 1_B(y)]
= \EE_{x,z} [f(z)1_A(x)1_B(z-x)]
= \ang{f, 1_A * 1_B},
\]
where the convolution is defined by $h_1 * h_2(z) := \EE_x
[h_1(x)h_2(z-x)]$. Let $\Phi_2$ denote the set of all functions that
can be written as a convex combination of convolutions $1_A * 1_B$
with $A, B \subseteq \ZZ_N$. We then have, by convexity,
\[
\norm{f}_{\square,2}
= \sup_{A,B \subseteq \ZZ_N} \abs{\ang{f,1_A * 1_B}} = \sup_{\varphi \in \Phi_2} \sabs{\sang{f, \varphi}}.
\]
More generally, given $r$ functions $h_1, \dots, h_r \colon
\ZZ_N^{r-1} \to \RR$, define their generalized convolution $(h_1, \dots, h_r)^*
 \colon \ZZ_N \to \RR$ by
\[
(h_1, \dots, h_r)^*(x) = \EE_{\substack{y_1, \dots, y_r \in \ZZ_N \\
    y_1 + \cdots + y_r = x}} [
h_1(y_2,\cdots,y_r)h_2(y_1,y_3,\dots,y_r)\cdots h_r(y_1,\cdots,y_{r-1})].
\]
For example, when $r = 2$, we recover the usual convolution $(h_1,
h_2)^* = h_1 * h_2$. We similarly have
\begin{equation*} \label{eq:cut-inner}
\norm{f}_{\square,r}
= \sup_{A_1, \dots, A_r \subseteq \ZZ_N^{r-1}} \abs{\ang{f,(1_{A_1},\dots,1_{A_r})^*}}
= \sup_{\varphi \in \Phi_r} \sabs{\sang{f, \varphi}},
\end{equation*}
where $\Phi_r$ is the set of all functions $\varphi: \ZZ_N \to \RR$ that can be
written as a convex combination of generalized convolutions $(1_{A_1},
1_{A_2}, \dots, 1_{A_r})^*$ with $A_1, \dots, A_r
\subseteq \ZZ_N^{r-1}$. The next lemma establishes a key property of $\Phi_r$.

\begin{lemma} \label{lem:mul-closed}
  The set $\Phi_r$ is closed under multiplication,
  i.e., if $\varphi, \varphi' \in \Phi_r$, then $\varphi\varphi' \in
  \Phi_r$.
\end{lemma}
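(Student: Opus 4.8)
The plan is to reduce the statement, via bilinearity and convexity, to the claim that the pointwise product of two generalized convolutions of indicator functions is again a convex combination of generalized convolutions of indicator functions. Concretely, suppose $\varphi = (1_{A_1}, \dots, 1_{A_r})^*$ and $\varphi' = (1_{A_1'}, \dots, 1_{A_r'})^*$ with $A_i, A_i' \subseteq \ZZ_N^{r-1}$. Since every element of $\Phi_r$ is a convex combination of such objects and $(\sum_k \lambda_k \varphi_k)(\sum_\ell \mu_\ell \varphi_\ell') = \sum_{k,\ell} \lambda_k \mu_\ell \varphi_k \varphi_\ell'$ is again a convex combination once we know each $\varphi_k\varphi_\ell' \in \Phi_r$, it suffices to handle this single product.

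Next I would unpack the definitions. Writing out $\varphi(x)\varphi'(x)$, we get
\[
\varphi(x)\varphi'(x) = \EE_{\substack{y_1 + \dots + y_r = x}} \EE_{\substack{y_1' + \dots + y_r' = x}} \Bigl[ \prod_{i=1}^r 1_{A_i}(y_{-i}) \prod_{i=1}^r 1_{A_i'}(y_{-i}') \Bigr].
\]
The key idea is a change of variables: introduce $z_i = y_i - y_i'$ for $i = 1, \dots, r-1$, keeping $y_1, \dots, y_r$ as one set of free variables and using $z_1, \dots, z_{r-1}$ as auxiliary ``shift'' parameters, with $y_i'$ then determined by $y_i' = y_i - z_i$ for $i < r$ and $y_r' = x - \sum_{i<r} y_i' = y_r + \sum_{i<r} z_i$. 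For each fixed choice of the shift vector $\mathbf{z} = (z_1, \dots, z_{r-1})$, the inner average over $y_1 + \dots + y_r = x$ becomes a single generalized convolution: the $i$-th factor is a function of $y_{-i}$ alone, because $1_{A_i}(y_{-i})$ depends on $y_{-i}$, and $1_{A_i'}(y_{-i}')$ depends on $y_{-i}'$, which is an affine function (a fixed translate, determined by $\mathbf z$) of $y_{-i}$. Hence $1_{A_i}(y_{-i}) \cdot 1_{A_i'}(y_{-i}')$ equals $1_{B_i^{\mathbf z}}(y_{-i})$ where $B_i^{\mathbf z} := A_i \cap (A_i' + \text{appropriate shift depending on } \mathbf z)$ is a subset of $\ZZ_N^{r-1}$. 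Therefore, for each $\mathbf z$, the inner average is exactly $(1_{B_1^{\mathbf z}}, \dots, 1_{B_r^{\mathbf z}})^*(x)$, an element of $\Phi_r$, and $\varphi\varphi'$ is the average of these over all $N^{r-1}$ choices of $\mathbf z$ — a convex combination. This exhibits $\varphi\varphi' \in \Phi_r$ and completes the argument.

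The main obstacle is purely bookkeeping: one has to verify carefully that after the change of variables the $i$-th factor of the product really is a function of $y_{-i}$ only (in both the $y$ and the shifted $y'$ arguments), so that the product of the two $r$-fold products collapses into a single $r$-fold product of indicators with the right dependence pattern, and that the resulting $\mathbf z$-average is correctly normalized so as to be a genuine convex combination. The index $i = r$ requires slight care, since $y_r'$ is not of the form $y_r - z_r$ but rather $y_r + \sum_{i<r} z_i$; however this is still a $\mathbf z$-dependent translate of $y_r$, hence $y_{-r}' = (y_1', \dots, y_{r-1}')$ is a $\mathbf z$-dependent translate of $y_{-r}$, so the factor $1_{A_r'}(y_{-r}')$ is still a translated indicator in the variable $y_{-r}$, and the argument goes through uniformly in $i$. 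No deep idea is needed beyond "multiplying two convolutions is an average of convolutions after a relative-shift substitution."
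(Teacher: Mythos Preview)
Your proof is correct and follows essentially the same approach as the paper: reduce by convexity to a product of two generalized convolutions, then perform a relative-shift change of variables so that for each fixed shift the product collapses to a single generalized convolution of intersected indicators, and average over shifts. The only cosmetic difference is that the paper parameterizes the shift by $z \in \ZZ_N^r$ with $\Sigma z = 0$ (setting $y' = y + z$), which treats all coordinates symmetrically and avoids singling out the index $r$, whereas you use $r-1$ free shift coordinates; the content is identical.
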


\begin{proof}
  It suffices to show that if $\varphi = (1_{A_1}, \cdots, 1_{A_r})^*$
  and $\varphi' = (1_{B_1}, \dots, 1_{B_r})^*$, where $A_1, \dots, A_r$,
  $B_1, \dots, B_r \subseteq \ZZ_N^{r-1}$, then $\varphi\varphi' \in
  \Phi_r$. For any $y = (y_1, \dots, y_r) \in \ZZ_N^r$, we write $\Sigma y = y_1 + \dots + y_r$ and
  $y_{-i} = (y_1, \dots, y_{i-1},y_{i+1}, \dots, y_r) \in
  \ZZ_N^{r-1}$. Then, for any $x \in \ZZ_N$, we have
  \begin{align*}
    \varphi(x)\varphi'(x)
    &= \EE_{\substack{y,y' \in \ZZ_N^r \\ \Sigma y = \Sigma y' = x}}
    [1_{A_1}(y_{-1}) 1_{B_1}(y'_{-1}) \cdots 1_{A_r}(y_{-r})
    1_{B_r}(y'_{-r})]
    \\
    &= \EE_{\substack{y,z \in \ZZ_N^r \\ \Sigma y = x, \Sigma z = 0}}
    [1_{A_1}(y_{-1}) 1_{B_1}(y_{-1} + z_{-1}) \cdots 1_{A_r}(y_{-r})
    1_{B_r}(y_{-r} + z_{-r})]
    \\
    &= \EE_{\substack{y,z \in \ZZ_N^r \\ \Sigma y = x, \Sigma z = 0}}
    [1_{A_1 \cap (B_1 - z_{-1})}(y_{-1}) \cdots 1_{A_r \cap (B_r - z_{-r})}(y_{-r})]
    \\
    &= \EE_{\substack{z \in \ZZ_N^r \\ \Sigma z = 0}} [(1_{A_1 \cap
      (B_1 - z_{-1})}, \dots, 1_{A_r \cap (B_r - z_{-r})})^*(x)].
  \end{align*}
This expresses $\varphi\varphi'$ as a convex combination of
generalized convolutions. Thus $\varphi \varphi' \in \Phi_r$.
\end{proof}

For the rest of this section, we fix the value of $r$ and simply write
$\norm{\cdot }$ for $\norm{\cdot }_{\square,r}$ and $\Phi$ for
$\Phi_r$. We have $\norm{f} = \sup_{\varphi \in \Phi}
\sabs{\sang{f,\varphi}}$. An important role in the proof is played by the
dual norm, which is defined by $\norm{\psi}^* = \sup_{\norm{f} \leq 1} \ang{f, \psi}$.
It follows easily from the definition that $\sabs{\sang{f,\psi}} \leq \norm{f} \norm{\psi}^*$.

It is also easy to show that the unit ball for this dual norm is the convex hull
of the union of $\Phi$ and $-\Phi$. To see that the convex hull is contained in the unit ball, we note that each element of $\Phi \cup (-\Phi)$ is in the unit ball and apply the triangle inequality to deduce that the same holds for convex combinations. For the reverse implication, suppose that $\psi$ is in the unit ball of $\norm{\cdot}^*$ but not in the convex hull of $\Phi \cup (-\Phi)$. Then, by the separating hyperplane theorem, there exists $f$ such that $\sabs{\sang{f, \varphi}} \leq 1$ for all $\varphi \in \Phi \cup (-\Phi)$ and $\sang{f, \psi} > 1$. But the first inequality implies that $\norm{f} \leq 1$ and so, by the second inequality, $\norm{\psi}^* > 1$, contradicting our assumption. By Lemma~\ref{lem:mul-closed}, this now implies that the unit ball for the dual norm is closed under multiplication. Thus, for every $\varphi, \psi \colon \ZZ_N \to
\RR$, we have $\norm{(\varphi/\norm{\varphi}^*)(\psi/\norm{\psi}^*)}^*
\leq 1$, i.e.,
\begin{equation}
  \label{eq:dual-norm-mul}
  \norm{\varphi\psi}^* \leq \norm{\varphi}^*\norm{\psi}^*.
\end{equation}

Finally, we note that $\norm{\cdot } \leq
\norm{\cdot }_1$ and $\norm{\cdot }_\infty \leq \norm{\cdot }^*$.
The first inequality follows since
\[\norm{f} =  \sup_{\varphi \in \Phi}
\sabs{\sang{f,\varphi}} = \sup_{\varphi \in \Phi}
\sabs{\EE_x f(x) \varphi(x)} \leq \EE_x |f(x)| = \norm{f}_1.\]
The second inequality follows from duality or by letting $x'$ be a value for which $\psi$
achieves its maximum and taking $f(x) = N$ for $x = x'$ and $0$ otherwise.
It is then straightforward to verify that this function satisfies $\norm{f} \leq 1$ and
\[\norm{\psi}^* \geq \sabs{\sang{f, \psi}} = \sabs{\psi(x')} = \norm{\psi}_\infty.\]

% In
% particular, one has $\ang{f, \psi} \leq \norm{f}\norm{\psi}^*$ for all
% $f, \psi \colon \ZZ_N \to \RR$.

% \begin{lemma}
%   \label{lem:dual-norm-mul}
%   For any functions $\psi, \psi' \colon \ZZ_N \to \RR$,
%   $\snorm{\psi\psi'}^* \leq \snorm{\psi}^*\snorm{\psi'}^*$.
% \end{lemma}

% \begin{proof}
%   For any functions $\varphi \in \Phi$ and $f \colon \ZZ_N \to \RR$,
%   we have
%    \[ \norm{f\varphi} = \sup_{\varphi' \in \Phi} \sabs{\sang{f \varphi, \varphi'}}
%   =
%   \sup_{\varphi' \in \Phi} \sabs{\sang{f, \varphi\varphi'}}
%   \leq
%     \sup_{\varphi' \in \Phi} \sabs{\sang{f, \varphi'}}
%   = \norm{f},
%   \]
%   where the penultimate step uses that $\Phi$ is closed under
%   multiplication. Thus, for any $f,\psi \colon \ZZ_N \to \RR$,
%   \[  \norm{f\psi}
%   = \sup_{\varphi \in \Phi} \abs{\ang{f\psi, \varphi}}
%   = \sup_{\varphi \in \Phi} \abs{\ang{f \varphi, \psi}}
%   \leq \sup_{\varphi \in \Phi} \norm{f \varphi} \norm{\psi}^*
%   \leq \norm{f}\norm{\psi}^*.
%   \]
%   It follows that for any $f, \psi, \psi' \colon \ZZ_N \to \RR$,
%   \[
%   \sang{f, \psi\psi'}
%   = \sang{f \psi, \psi'}
%   \leq \snorm{f\psi}\snorm{\psi'}^*
%   \leq \snorm{f} \snorm{\psi}^* \snorm{\psi'}^*.
%   \]
%   Taking the supremum over all $f$ with $\norm{f} \leq 1$, we see that $\snorm{\psi\psi'}^* \leq \snorm{\psi}^*\snorm{\psi'}^*$, as required.
% \end{proof}

\begin{proof}[Proof of Theorem~\ref{thm:dense-model}]
We may assume without loss of generality that $\epsilon \leq \frac{1}{10}$.
It suffices to show that there exists a function $\tf \colon \ZZ_N \to
[0,1+\e/2]$ with $\snorm{f - \tf} \leq \e/2$. Suppose, for
contradiction, that no such $\tf$ exists. Let
\[
K_1 := \{\tf \colon \ZZ_N \to [0,1+\e/2]\} \quad \text{and} \quad
K_2 := \{h \colon \ZZ_N \to \RR \mid \norm{h} \leq \e/2\}.
\]
We can view $K_1$ and $K_2$ as closed convex sets in
$\RR^N$. By assumption, $f \notin K_1 + K_2 := \{\tf + h : \tf
\in K_1, h \in K_2\}$. Therefore, since $K_1 + K_2$ is convex, the separating hyperplane
theorem implies that there exists some $\psi \colon \ZZ_N \to \RR$ such that
\begin{enumerate}
\item[(a)] $\ang{f, \psi} > 1$, and
\item[(b)] $\ang{g, \psi} \leq 1$ for all $g \in K_1 + K_2$.
\end{enumerate}
Note that since $0 \in K_1, K_2$, we have $K_1, K_2 \subset K_1 + K_2$. Therefore, in (b), we may take $g = (1+\e/2) 1_{\psi > 0} \in K_1$, obtaining $\ang{1, \psi_+} \leq
  (1 + \e/2)^{-1}$. Here $x_+ := \max\{0, x\}$ and $\psi_+(x) := \psi(x)_+$.
On the other hand, ranging $g$ over $K_2$, we obtain
$\norm{\psi}_\infty \leq \norm{\psi}^*
\leq 2/\e$, since if $\sang{g, \psi} \leq 1$ for all $g$ with $\norm{g} \leq \e/2$, then
$\sang{g, \psi} \leq 2/\e$ for all $g$ with $\norm{g} \leq 1$.

By the Weierstrass polynomial approximation theorem, there exists some
polynomial $P$ such that
$\abs{P(x) - x_+} \leq \e/8$ for all $x \in [-2/\e,2/\e]$. Let $P(x) =
p_dx^d + \cdots + p_1 x + p_0$ and $R = \abs{p_d} (2/\e)^d + \cdots +
\abs{p_1}(2/\e) + \abs{p_0}$ (it is possible to take $P$ so that $R =
\exp(\e^{-O(1)})$).

We write $P\psi$ to mean the function on $\ZZ_N$ defined by $P\psi(x)
= P(\psi(x))$. Using the triangle inequality,
\eqref{eq:dual-norm-mul}, and $\norm{\psi}^* \leq 2/\e$, we have
\[
\norm{P\psi}^*
\leq \sum_{i=0}^d \abs{p_i}\snorm{\psi^i}^*
\leq \sum_{i=0}^d \abs{p_i}(\snorm{\psi}^*)^i
\leq \sum_{i=0}^d \abs{p_i}(2/\e)^i
= R.
\]
Therefore, since we are assuming that $\norm{\nu-1} \leq \e'$,
\[
\abs{\ang{\nu-1,P\psi}} \leq \norm{\nu-1} \norm{P\psi}^* \leq \e' R.
\]
Since $\norm{\psi}_\infty \leq 2/\e$, we have $\norm{P\psi - \psi_+}_\infty
\leq \e/8$. Hence,
\[
\ang{\nu,P\psi} \leq
\ang{1,P\psi}  + \e'R
\leq \ang{1,\psi_+} + \e/8  + \e'R
\leq
(1+\e/2)^{-1} + \e/8 + \e'R.
\]
Also, we have $\norm{\nu}_1 = \ang{\nu, 1} \leq \norm{\nu - 1}
+ 1 \leq 1 + \e'$, where we used $\ang{\nu - 1, 1} \leq \norm{\nu - 1} \norm{1}^*$ and $\norm{1}^* = 1$. Thus,
\[
\ang{f,\psi} \leq \ang{f,\psi_+} \leq \ang{\nu,\psi_+}
\leq \ang{\nu,P\psi} + \norm{\nu}_1\norm{P\psi - \psi_+}_\infty
\leq (1+\e/2)^{-1} + \e/8 + \e'R + (1 + \e')\e/8.
\]
Since $\e \leq \frac{1}{10}$, the right-hand side is at most $1$ when $\e'$ is made sufficiently small (e.g.,
$\e' = \e/(8R)$), but this contradicts (a) from earlier. The dense
model theorem follows.
\end{proof}

\section{Counting lemma} \label{sec:count}

In this section, we prove the counting lemma. We will focus principally on the graph case,
Theorem~\ref{thm:tri-count-sparse} below,
since this case contains all the important ideas and is notationally simpler.
The hypergraph generalization is then discussed towards the end of the section.

For graphs, the counting lemma says that if two weighted graphs
are close in cut norm, then they have similar triangle densities.
To be more specific, we consider weighted tripartite
graphs on the vertex set $X \cup Y \cup Z$, where $X$, $Y$, and $Z$ are
finite sets. Such a weighted graph $g$ is given by three functions
$g_{XY} \colon X \x Y \to \RR$, $g_{XZ} \colon X \x Z \to \RR$, and
$g_{YZ} \colon Y \x Z \to \RR$, although we often drop the subscripts
if they are clear from context. We write $\norm{g}_\square =
\max\{\norm{g_{XY}}_\square, \norm{g_{XZ}}_\square,
\norm{g_{YZ}}_\square\}$.

We first consider the easier case of counting in dense (i.e., bounded
weight) graphs (see, for example, \cite{L12}).

\begin{proposition}[Triangle counting lemma, dense setting]
  \label{prop:tri-count-dense}
  Let $g$ and $\tg$ be weighted tripartite graphs on $X \cup Y \cup
  Z$ with weights in $[0,1]$. If $\norm{g - \tg}_\square \leq \e$, then
  \[
  \abs{\EE_{x \in X, y \in Y, z \in Z}[g(x,y)g(x,z)g(y,z) -
    \tg(x,y)\tg(x,z)\tg(y,z)]} \leq 3\e.
  \]
\end{proposition}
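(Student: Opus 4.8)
The plan is to use a telescoping (hybrid) argument, swapping one factor at a time from $g$ to $\tg$ and controlling each swap by the cut norm. Write
\begin{align*}
g(x,y)g(x,z)g(y,z) - \tg(x,y)\tg(x,z)\tg(y,z)
&= (g(x,y) - \tg(x,y))\,g(x,z)\,g(y,z) \\
&\quad + \tg(x,y)\,(g(x,z) - \tg(x,z))\,g(y,z) \\
&\quad + \tg(x,y)\,\tg(x,z)\,(g(y,z) - \tg(y,z)).
\end{align*}
Taking expectations and the triangle inequality, it suffices to bound each of the three terms by $\e$.

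For the first term, set $h = g_{XY} - \tg_{XY}$, so $\snorm{h}_\square \le \e$. I need to bound $\abs{\EE_{x,y,z}[h(x,y) g(x,z) g(y,z)]}$. Fix $x$ and $y$ and average over $z$ first: define $G(x,y) := \EE_{z \in Z}[g(x,z)g(y,z)]$. Since $0 \le g \le 1$, we have $0 \le G(x,y) \le 1$ for all $(x,y)$, so the quantity to bound is $\abs{\EE_{x,y}[h(x,y) G(x,y)]}$. This is not yet literally the cut norm because $G$ is a general $[0,1]$-valued function rather than an indicator $1_A(x)1_B(y)$, but this gap is routine: either invoke the standard fact that $\snorm{h}_\square = \sup_{u\colon X\to[0,1],\,v\colon Y\to[0,1]} \abs{\EE_{x,y}[h(x,y)u(x)v(y)]}$ (which in turn reduces to the indicator case since the objective is bilinear, hence extremized at the vertices of the cube, in each of $u$ and $v$ separately), or more directly bound $\abs{\EE_{x,y}[h(x,y)G(x,y)]}$ by writing $G$ as an average of indicators of its superlevel sets in one of the two variables. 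Either way one gets the bound $\e$. The second term is handled symmetrically: fix $x,z$, average over $y$, use $0 \le \tg(x,y) \le 1$ to absorb the $\tg$ factor into a $[0,1]$-valued function of $(x,z)$, and apply $\snorm{g_{XZ}-\tg_{XZ}}_\square \le \e$. The third term is likewise symmetric, averaging over $x$ and using $0 \le \tg \le 1$.

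The only mild subtlety — and the step I would flag as the one requiring a sentence of care rather than a one-line citation — is precisely this passage from the indicator-based definition of $\snorm{\cdot}_\square$ in \eqref{eq:graph-cut} to its application against an arbitrary $[0,1]$-valued ``test weight'' like $G(x,y)$ or $\tg(x,y)\cdot(\text{averaged factor})$. Everything else is bookkeeping: the telescoping identity, three uses of the triangle inequality, and the observation that bounded weights keep the auxiliary averaged functions inside $[0,1]$. So in the write-up I would state the telescoping identity, prove the lemma that $\abs{\EE[h(x,y)u(x)v(y)]} \le \snorm{h}_\square$ for $u,v$ valued in $[0,1]$ (by bilinearity/extreme points, or by decomposing into level sets), and then apply it three times, once in each ``missing variable'' direction, to conclude the bound $3\e$.
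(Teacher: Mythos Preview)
Your telescoping decomposition is exactly the paper's, and the extension of the cut norm from indicators $1_A(x)1_B(y)$ to products $u(x)v(y)$ with $u,v\in[0,1]$ is correct and is also what the paper uses. The gap is in how you handle the third variable. After averaging over $z$ you obtain $G(x,y)=\EE_z[g(x,z)g(y,z)]$, and you then try to bound $\abs{\EE_{x,y}[h(x,y)G(x,y)]}$ by $\snorm{h}_\square$ using only that $0\le G\le 1$. But $G$ is \emph{not} of the product form $u(x)v(y)$, and the cut norm does not control $\EE[hG]$ for a general $[0,1]$-valued $G$ on $X\times Y$. (Concretely: on $X=Y=\{0,1\}$ with $h(x,y)=1$ if $x=y$ and $-1$ otherwise, one has $\snorm{h}_\square=\tfrac14$, yet with $G(x,y)=1_{x=y}$ one gets $\EE[hG]=\tfrac12$.) Neither the bilinearity argument nor the superlevel-set decomposition rescues this, since both produce test functions of product type, not arbitrary functions of $(x,y)$.

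The fix is to reverse the order: do \emph{not} average over $z$ first. For each fixed $z$, set $u_z(x)=g(x,z)$ and $v_z(y)=g(y,z)$; these are $[0,1]$-valued functions of a single variable each, so
\[
\bigl|\EE_{x,y}[(g-\tg)(x,y)\,u_z(x)\,v_z(y)]\bigr|\le \snorm{g_{XY}-\tg_{XY}}_\square\le\e,
\]
and then averaging over $z$ gives the bound. The other two terms are handled the same way, fixing $y$ and $x$ respectively. This is precisely the paper's argument; your write-up just needs to swap ``average over $z$ first'' for ``fix $z$ first''.
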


\begin{proof}
  Unless indicated otherwise, all expectations are taken over $x \in X$, $y \in Y$, $z \in
  Z$ uniformly and independently.
  From the definition \eqref{eq:graph-cut} of the cut norm, we have
  that   \begin{equation} \label{eq:graph-cut-relaxed}
  \abs{\EE_{x \in X, y \in Y}[(g(x,y)-\tg(x,y))a(x)b(y)]}\leq \e
  \end{equation}
  for every function $a \colon X \to [0,1]$ and $b \colon Y \to [0,1]$
  (since the expectation is bilinear in $a$ and $b$, the extrema
  occur when $a$ and $b$ are $\{0,1\}$-valued, so
  \eqref{eq:graph-cut-relaxed} is equivalent to
  \eqref{eq:graph-cut}). It follows that
  \[
  \abs{\EE[g(x,y)g(x,z)g(y,z) -
    \tg(x,y)g(x,z)g(y,z)]} \leq \e,
  \]
  since the expectation has the form \eqref{eq:graph-cut-relaxed} if
  we fix any value of $z$. Similarly, we have
  \[
  \abs{\EE[\tg(x,y)g(x,z)g(y,z) -
    \tg(x,y)\tg(x,z)g(y,z)]} \leq \e
  \]
  and
  \[
  \abs{\EE[\tg(x,y)\tg(x,z)g(y,z) -
    \tg(x,y)\tg(x,z)\tg(y,z)]} \leq \e.
  \]
  The result then follows from telescoping and the triangle inequality.
\end{proof}

This proof does not work in the sparse setting, when $g$ is unbounded,
since \eqref{eq:graph-cut-relaxed} requires $a$ and $b$ to be
bounded. The main result of this section, stated next for graphs
(the hypergraph version is stated towards the end of the section), gives a
counting lemma assuming $0 \leq g \leq \nu$ for some $\nu$ satisfying
the linear forms condition. This is one of the main results in our paper~\cite{CFZrelsz}.

\begin{theorem}[Relative triangle counting lemma]
  \label{thm:tri-count-sparse}
  Let $\nu, g, \tg$ be weighted tripartite graphs on $X \cup Y \cup
  Z$. Assume that $\nu$ satisfies the $3$-linear forms condition
  (Definition~\ref{def:3-lfc}), $0 \leq g \leq \nu$, and $0 \leq \tg
  \leq 1$. If $\norm{g - \tg}_\square
  = o(1)$, then
  \[
  \abs{\EE_{x \in X, y \in Y, z \in Z}[g(x,y)g(x,z)g(y,z) -
    \tg(x,y)\tg(x,z)\tg(y,z)]} = o(1).
  \]
\end{theorem}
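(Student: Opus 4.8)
The plan is to follow the telescoping strategy of Proposition~\ref{prop:tri-count-dense}, but to replace the step that fails in the sparse setting by a Cauchy--Schwarz (``generalized von Neumann'') argument fuelled by the $3$-linear forms condition. Writing $g=\tg+(g-\tg)$ on each of the three parts, it suffices to show that
\[
\EE[(g-\tg)(x,y)g(x,z)g(y,z)],\quad \EE[\tg(x,y)(g-\tg)(x,z)g(y,z)],\quad \EE[\tg(x,y)\tg(x,z)(g-\tg)(y,z)]
\]
are each $o(1)$. The last term is handled exactly as in the dense case: for fixed $x$ the functions $y\mapsto\tg(x,y)$ and $z\mapsto\tg(x,z)$ are $[0,1]$-valued, so by the definition of the cut norm the inner expectation over $y,z$ is at most $\snorm{g-\tg}_\square=o(1)$; equivalently, the weight $\EE_x[\tg(x,y)\tg(x,z)]$ is a convex combination of rank-one tensors of $[0,1]$-valued functions and so has dual cut norm at most $1$. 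The difficulty, and the whole point of the lemma, is the first two terms, where one (or both) of the ``opposite'' weights --- e.g.\ $\EE_z[g(x,z)g(y,z)]$ --- is unbounded, so that $g-\tg$ having small cut norm is not by itself enough.

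For these terms I would apply Cauchy--Schwarz three times, each time bounding a $\le\nu$ factor pointwise by $\nu$ and duplicating one of the vertex classes $X,Y,Z$. Three $\nu$-counts of the form $\EE[\nu(x,y)\nu(x',y)]=1+o(1)$ come out as harmless prefactors, and the upshot is that (the fourth power of) the term in question is bounded, up to a factor $1+o(1)$, by
\[
\EE_{x,x',y,y'}\bigl[(g-\tg)(x,y)(g-\tg)(x',y)(g-\tg)(x,y')(g-\tg)(x',y')\cdot\Omega(x,x',y,y')\bigr],
\]
where $\Omega(x,x',y,y')=\EE_z[\nu(x,z)\nu(x',z)\nu(y,z)\nu(y',z)]$. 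Writing $\Omega=1+(\Omega-1)$: the ``$1$'' part is $\snorm{g-\tg}_{\mathrm{box}}^4$, where $\snorm{h}_{\mathrm{box}}^4:=\EE_{x,x',y,y'}[h(x,y)h(x',y)h(x,y')h(x',y')]$, which is $o(1)$ because $g-\tg$ has small cut norm and is majorized by $\nu$ --- this is the analogous but simpler estimate for the $2$-blow-up of an edge, and also shows $\snorm{\nu-1}_\square=o(1)$, so that Theorem~\ref{thm:dense-model} applies and one may alternatively phrase the whole argument as ``replace each $\le\nu$ factor by its cut-norm dense model, then quote Proposition~\ref{prop:tri-count-dense}''. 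The ``$\Omega-1$'' part is bounded, using $|g-\tg|\le\nu+1$ and Cauchy--Schwarz, by $(\EE[\nu(x,y)\nu(x',y)\nu(x,y')\nu(x',y')])^{1/2}(\EE[\nu(x,y)\nu(x',y)\nu(x,y')\nu(x',y')(\Omega-1)^2])^{1/2}$; expanding $(\Omega-1)^2=\Omega^2-2\Omega+1$, the three resulting expectations are, respectively, the count of the full $K_{2,2,2}$ (the $2$-blow-up of a triangle), of a subgraph of it, and of a $4$-cycle, so each equals $1+o(1)$ by the $3$-linear forms condition (Definition~\ref{def:3-lfc}) and the bracket is $o(1)$.

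The main obstacle is exactly the tension created by the unboundedness: cut-norm smallness of $g-\tg$ can only be exploited by pairing against \emph{bounded} functions, so one cannot densify a factor until its ``opposite'' factors are already bounded, yet at the start none of them are. Getting around this forces the Cauchy--Schwarz bookkeeping to be arranged so that every factor majorized only by $\nu$ ends up inside a configuration that is a subgraph of the $2$-blow-up of the relevant simplex --- never, for instance, as $\nu$ squared on a single edge, which the linear forms condition does not control --- while still retaining the smallness of the distinguished factor through the Cauchy--Schwarz steps; this matching of the needed $\nu$-estimates with exactly the subgraph counts provided by the $3$-linear forms condition is the crux. The hypergraph version of the statement is proved along identical lines, with $K_{2,2,2}$ replaced throughout by the $2$-blow-up of $K_k^{(k-1)}$.
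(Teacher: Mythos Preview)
Your generalized von Neumann reduction is fine (though it is two Cauchy--Schwarz steps, not three: you duplicate $X$ and $Y$, picking up two prefactors $\EE[\nu(y,z)]$ and $\EE[\nu(x,z)\nu(x',z)]$, and arriving at the weighted box-norm expression you wrote). Your handling of the $\Omega-1$ piece via the linear forms condition is also correct. The gap is the ``$1$'' piece: you assert that $\snorm{g-\tg}_{\mathrm{box}}^4=o(1)$ follows from small cut norm plus majorization by $\nu$, but this is exactly where the difficulty you diagnose in your final paragraph reappears one level down. For $|h|\le 1$ one does have $\snorm{h}_{\mathrm{box}}^4\le 4\snorm{h}_\square$ (fix $x',y'$; then $h(x',\cdot)$ and $h(\cdot,y')$ are $[-1,1]$-valued and the cut norm applies directly). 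But here $h=g-\tg$ is only bounded by $\nu+1$, and in the $C_4$-count $\EE[h(x,y)h(x',y)h(x,y')h(x',y')]$ the factors opposite $h(x,y)$ are again copies of $h$, hence unbounded --- so the cut norm cannot be invoked. Calling this ``the analogous but simpler estimate for the $2$-blow-up of an edge'' is circular: it is itself a relative counting lemma (for $K_{2,2}$) under the cut-norm hypothesis, and establishing it needs the same missing idea as the triangle case. The alternative phrasing you offer (``replace each $\le\nu$ factor by its dense model and quote Proposition~\ref{prop:tri-count-dense}'') is likewise circular, since that replacement \emph{is} the theorem.

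What the paper does instead is \emph{densification}. Rather than reducing to a box norm, one writes
\[
\EE[ggg]-\EE[\tg\tg\tg]=\EE_{x,y}\bigl[g(x,y)\,(g'-\tg')(x,y)\bigr]+\EE_{x,y}\bigl[(g-\tg)(x,y)\,\tg'(x,y)\bigr],
\]
where $g'(x,y)=\EE_z[g(x,z)g(y,z)]$ and similarly for $\tg',\nu'$. The second term is controlled by the cut norm since $\tg'\le 1$. For the first, Cauchy--Schwarz together with a strong-linear-forms lemma (essentially your $\Omega-1$ argument, used to delete a single $\nu(x,y)$ weight) reduces matters to $\EE[(g'-\tg')^2]$. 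The crucial step is then to cap $g'$ at $1$: since $g'\le\nu'$ and $\EE[(\nu'-1)^2]=o(1)$ by the linear forms condition, the cap $g'_{\wedge 1}:=\min(g',1)$ differs negligibly from $g'$, and one verifies $\snorm{g'_{\wedge 1}-\tg'}_\square=o(1)$. Now $g'_{\wedge 1}$ is genuinely bounded, so one may replace the triple $(\nu_{XY},g_{XY},\tg_{XY})$ by $(1,g'_{\wedge 1},\tg')$ and induct on the number of parts where $\nu$ is identically $1$, the base case being the dense counting lemma. Your approach is in spirit the original Green--Tao argument, which does work --- but only if the dense model theorem produces $\tg$ close to $g$ in the Gowers/box norm, and that requires a stronger dense model theorem. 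The cut-norm route trades an easier dense model step for a harder counting lemma, and densification is the idea that pays that price.
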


\begin{figure}

  \newcommand{\VR}{2.5cm}
  \newcommand{\Vr}{2.5em}

  \begin{tikzpicture}[scale=.5,
    b/.style={ball color = gray,circle,inner sep = 0,minimum
      size=1mm},
    e/.style={line width=.5mm,line cap = rect}
    ]

    \begin{scope}[shift={(-2,0)}]
      % \fill[blue!20,opacity=.3] ($(90:2.5) + (150:2.5em)$) -- ($(90:2.5) + (150:-2.5em)$) --
      % ($(210:2.5) + (150:-2.5em)$) -- ($(210:2.5)+(150:2.5em)$);
      \fill[blue!20,opacity=.3] ($(90:2.5) + (30:2.5em)$) -- ($(90:2.5) + (30:-2.5em)$) --
      ($(330:2.5) + (30:-2.5em)$) -- ($(330:2.5)+(30:2.5em)$);
      \fill[blue!20,opacity=.3] ($(330:2.5) + (90:2.5em)$) -- ($(330:2.5) + (90:-2.5em)$) --
      ($(210:2.5) + (90:-2.5em)$) -- ($(210:2.5)+(90:2.5em)$);
      \draw[circle,fill=white] (90:2.5) circle (2.5em) {};
      \draw[circle,fill=white] (210:2.5) circle (2.5em) {};
      \draw[circle,fill=white] (330:2.5) circle (2.5em) {};

      \node[b,label=above:{\small $x$}] (x) at (80:2){};
      \node[b,label=left:{\small $y$}] (y) at (220:2){};
      \node[b,label=below:{\small $z$}] (z) at (335:1.8){};
      \node[b,label=right:{\small $z'$}] (z') at (325:3.2){};

      \draw[e] (x)--(z)--(y)--(z')--(x);
             \node[font=\footnotesize,align=center] at (25:2)
       {$g$};
       \node[font=\footnotesize,align=center] at (-90:2)
       {$g$};
    \end{scope}

    \node at (4,-3.3) {$\EE[g(x,z)g(x,z')g(y,z)g(y,z')] = \EE[g'(x,y)g(x,z)g(y,z)]$};

        \begin{scope}[shift={(10,0)}]
      \fill[blue!20,opacity=.3] ($(90:2.5) + (30:2.5em)$) -- ($(90:2.5) + (30:-2.5em)$) --
      ($(330:2.5) + (30:-2.5em)$) -- ($(330:2.5)+(30:2.5em)$);
      \fill[blue!20,opacity=.3] ($(330:2.5) + (90:2.5em)$) -- ($(330:2.5) + (90:-2.5em)$) --
      ($(210:2.5) + (90:-2.5em)$) -- ($(210:2.5)+(90:2.5em)$);
      \fill[blue!70,opacity=.3] ($(90:2.5) + (150:2.5em)$) -- ($(90:2.5) + (150:-2.5em)$) --
      ($(210:2.5) + (150:-2.5em)$) -- ($(210:2.5)+(150:2.5em)$);
      \draw[circle,fill=white] (90:2.5) circle (2.5em) {};
      \draw[circle,fill=white] (210:2.5) circle (2.5em) {};
      \draw[circle,fill=white] (330:2.5) circle (2.5em) {};

      \node[b,label=above:{\small $x$}] (x) at (80:2){};
      \node[b,label=left:{\small $y$}] (y) at (220:2){};
      \node[b,label=below:{\small $z$}] (z) at (335:1.8){};
      % \node[b,label=right:{\small $z'$}] (z') at (325:3.2){};

      \draw[e] (x)--(z)--(y);
             \node[font=\footnotesize,align=center] at (140:1.5)
       {$g'$};
       \node[font=\footnotesize,align=center] at (160:5.3)
       {$g'(x,y) = $ \\  $\EE_{z'\in Z}[g(x,z')g(y,z')]$};
       \node[font=\footnotesize,align=center] at (25:2)
       {$g$};
       \node[font=\footnotesize,align=center] at (-90:2)
       {$g$};
            \draw[e] (x)--(y);
    \end{scope}
  \end{tikzpicture}
  \vspace{-1em}
  \caption{The densification step in the proof of the relative
    triangle counting lemma.}
  \label{fig:densify}
\end{figure}
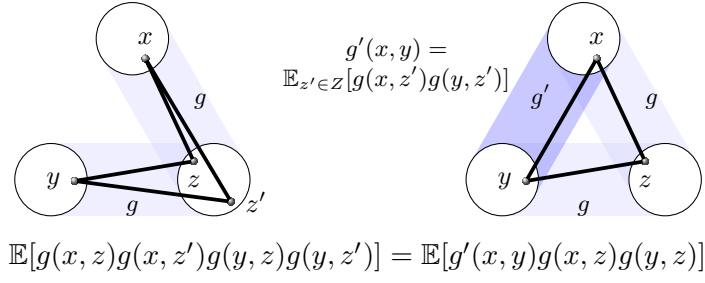

The proof uses repeated application of the Cauchy-Schwarz inequality,
a standard technique in this area, popularized by
Gowers~\cite{Gow98,Gow01,Gow06,Gow07}. The key additional idea, introduced in \cite{CFZrelsz, CFZ14adv}, is
\emph{densification} (see Figure~\ref{fig:densify}). After several applications of the Cauchy-Schwarz
inequality, it becomes necessary to analyze the $4$-cycle density:
$\EE_{x,y,z,z'}[g(x,z)g(x,z')g(y,z)g(y,z')]$. To do this, one introduces an auxiliary
weighted graph $g' \colon X \x Y \to [0,\infty)$ defined by $g'(x,y)
:= \EE_{z'}[g(x,z')g(y,z')]$ (this is basically the codegree function).
Note that we benefit here from working with weighted graphs.
The expression for the $4$-cycle density
now becomes $\EE_{x,y,z}[g'(x,y)g(x,z)g(y,z)]$.

At first glance, it seems that our reasoning is circular. Our aim was to estimate a certain triangle density expression but we have now returned to another triangle density expression. However, $g'$ behaves much more
like a dense weighted graph with bounded edge
weights, so what we have accomplished is to replace one of the ``sparse'' $g_{XY}$ by a
``dense'' $g'_{XY}$. If we do this two more times, replacing $g_{YZ}$ and $g_{XZ}$ with dense counterparts, the problem
reduces to the dense case, which we already know how to handle.

We begin with a warm-up showing how to apply the Cauchy-Schwarz
inequality (there will be many more applications later on). The following lemma
shows that the $3$-linear forms condition on $\nu$ implies $\norm{\nu -
  1}_\square = o(1)$, which we need to apply the dense model
theorem, Theorem~\ref{thm:dense-model}.

\begin{lemma} \label{lem:graph-nu-1}
  For any $\nu \colon X \x Y \to \RR$,
  \begin{equation} \label{eq:cut<=gowers}
  \norm{\nu - 1}_\square \leq (\EE_{x,x'\in X,y,y'\in
    Y}[(\nu(x,y)-1)(\nu(x',y) - 1)(\nu(x,y') - 1)(\nu(x',y')-1)])^{1/4}.
  \end{equation}
\end{lemma}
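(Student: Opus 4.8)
The plan is the standard Gowers--Cauchy--Schwarz argument applied to the function $f := \nu - 1 \colon X \x Y \to \RR$; note that the right-hand side of \eqref{eq:cut<=gowers} is exactly the ``$4$-cycle count'' of $f$, i.e., $f$ evaluated on the $2$-blow-up of an edge (cf.\ Figure~\ref{fig:2-blow-up}), and in particular it is nonnegative, since
\[
\EE_{x,x',y,y'}[f(x,y)f(x',y)f(x,y')f(x',y')] = \EE_{y,y'}\bigl[ (\EE_x[f(x,y)f(x,y')])^2\bigr] \ge 0,
\]
so its fourth root is well defined. Since this quantity does not depend on the sets in the definition \eqref{eq:graph-cut} of $\norm{\cdot}_\square$, it suffices to bound $\sabs{\EE_{x,y}[f(x,y)1_A(x)1_B(y)]}$ by it for each fixed $A \subseteq X$, $B \subseteq Y$, and then take the supremum.

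First I would fix $A, B$, write $u = 1_A$, $v = 1_B$ (so $0 \le u,v \le 1$), and use $\abs{\EE_x[u(x)g(x)]} \le \EE_x\abs{g(x)}$ with $g(x) := \EE_y[f(x,y)v(y)]$ to get
\[
\abs{\EE_{x,y}[f(x,y)u(x)v(y)]} \le \EE_x\abs{\EE_y[f(x,y)v(y)]}.
\]
Applying Cauchy--Schwarz in $x$ and expanding the square by introducing a second copy $y'$ of $y$,
\[
\Bigl(\EE_x\abs{\EE_y[f(x,y)v(y)]}\Bigr)^2 \le \EE_x\bigl(\EE_y[f(x,y)v(y)]\bigr)^2 = \EE_{y,y'}\bigl[v(y)v(y')\,\EE_x[f(x,y)f(x,y')]\bigr].
\]
Now I would again use $0 \le v(y)v(y') \le 1$ to pass to $\EE_{y,y'}\abs{\EE_x[f(x,y)f(x,y')]}$, and apply Cauchy--Schwarz a second time, in the pair $(y,y')$, introducing a second copy $x'$ of $x$:
\[
\Bigl(\EE_{y,y'}\abs{\EE_x[f(x,y)f(x,y')]}\Bigr)^2 \le \EE_{y,y'}\bigl(\EE_x[f(x,y)f(x,y')]\bigr)^2 = \EE_{x,x',y,y'}[f(x,y)f(x',y)f(x,y')f(x',y')].
\]
Chaining these inequalities gives $\abs{\EE_{x,y}[f(x,y)1_A(x)1_B(y)]}^4 \le \EE_{x,x',y,y'}[f(x,y)f(x',y)f(x,y')f(x',y')]$ for each $A,B$; taking fourth roots and then the supremum over $A,B$ yields \eqref{eq:cut<=gowers} with $f = \nu - 1$.

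There is no real obstacle here. The only points needing care are keeping the absolute values in the correct places so that each Cauchy--Schwarz step is legitimate — in particular, discarding the indicators using $0 \le 1_A, 1_B \le 1$ \emph{before} each squaring rather than after — and observing (as above) that the final quartic expression is automatically nonnegative, so the statement makes sense. I would also remark that the identical computation goes through with $1_A, 1_B$ replaced by arbitrary functions valued in $[0,1]$, which is the form in which variants of this estimate get used later.
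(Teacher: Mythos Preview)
Your proof is correct and is essentially the same as the paper's: two successive applications of Cauchy--Schwarz, dropping the indicators $1_A,1_B$ along the way using $0\le 1_A,1_B\le 1$. The only cosmetic difference is that the paper applies Cauchy--Schwarz first (keeping the indicator inside the square) and then drops the indicator, whereas you drop the indicator via the triangle inequality before squaring; both orderings are fine.
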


\begin{remark} The right-hand side of \eqref{eq:cut<=gowers} is the Gowers uniformity
  norm of $\nu - 1$. The lemma shows that the cut norm is weaker
  than the Gowers uniformity norm.
To see $\norm{\nu - 1}_\square = o(1)$, we expand the right-hand side
of \eqref{eq:cut<=gowers} into an alternating sum of linear forms in
$\nu$, each being $1 + o(1)$ by the linear forms condition, so that
the alternating sum cancels to $o(1)$.
\end{remark}

\begin{proof}
  By repeated applications of the Cauchy-Schwarz inequality, we have,
  for $A \subseteq X$ and $B \subseteq Y$,
  \begin{align*}
  \abs{\EE_{x,y}[(\nu(x,y) - 1)1_A(x)1_B(y)]}^4
  &\leq \abs{\EE_x[(\EE_y[(\nu(x,y) - 1)1_B(y)])^2 1_A(x)] }^2
  \\
  &\leq \abs{\EE_x[(\EE_y[(\nu(x,y) - 1)1_B(y)])^2] }^2
  \\
  &= \abs{\EE_{x,y,y'}[(\nu(x,y) - 1)(\nu(x,y')-1)1_B(y)1_B(y')]}^2
  \\
  &\leq \EE_{y,y'}[(\EE_x[(\nu(x,y) - 1)(\nu(x,y')-1)])^2 1_B(y)1_B(y')]
  \\
  &\leq \EE_{y,y'}[(\EE_x[(\nu(x,y) - 1)(\nu(x,y')-1)])^2]
  \\
  &= \EE_{x,x',y,y'}[(\nu(x,y)-1)(\nu(x',y) - 1)(\nu(x,y') - 1)(\nu(x',y')-1)].
  \end{align*}
  The lemma then follows.
\end{proof}

The next lemma is crucial to what follows. It shows that in certain expressions a factor $\nu$ can be
deleted from an expectation while incurring only a $o(1)$ loss.

\begin{lemma}[Strong linear forms] \label{lem:graph-slf}
  Let $\nu,g,\tg$ be weighted tripartite graphs on $X \cup Y \cup Z$.
  Assume that $\nu$ satisfies the $3$-linear forms condition, $0 \leq g \leq \nu$,
  and $0 \leq \tg \leq 1$. Then
  \[
  \EE_{x\in X, y \in Y, z,z'\in Z}[(\nu(x,y) -
  1)g(x,z)g(x,z')g(y,z)g(y,z')] = o(1)
  \]
  and the same statement holds if any subset of the four $g$ factors are
  replaced by $\tg$.
\end{lemma}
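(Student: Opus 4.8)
The plan is to prove the identity by two applications of the Cauchy--Schwarz inequality, after which the expression collapses to an alternating sum of products of $\nu$-weights over subgraphs of $K_{2,2,2}$ that cancels to $o(1)$ by the $3$-linear forms condition (Definition~\ref{def:3-lfc}). Write $h_1,h_2,h_3,h_4$ for the four factors $g(x,z),g(x,z'),g(y,z),g(y,z')$, each of which is either $g$ (so $0\le h_i\le\nu$) or $\tg$ (so $0\le h_i\le 1$), and set
\[
F:=\EE_{x,y,z,z'}\bigl[(\nu(x,y)-1)\,h_1(x,z)h_2(x,z')h_3(y,z)h_4(y,z')\bigr],
\]
the quantity to be shown $o(1)$.

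First I would group $F=\EE_{x,z,z'}\bigl[h_1(x,z)h_2(x,z')\cdot A(x,z,z')\bigr]$ with $A(x,z,z'):=\EE_y[(\nu(x,y)-1)h_3(y,z)h_4(y,z')]$, and apply Cauchy--Schwarz with the nonnegative weight $h_1(x,z)h_2(x,z')$, obtaining $|F|^2\le\EE_{x,z,z'}[h_1(x,z)h_2(x,z')]\cdot\EE_{x,z,z'}[h_1(x,z)h_2(x,z')\,A(x,z,z')^2]$. The first factor is at most $1+o(1)$ by the linear forms condition. In the second factor the weight $h_1(x,z)h_2(x,z')$ now multiplies the \emph{square} $A(x,z,z')^2\ge 0$, so it may be replaced by its pointwise majorant --- a product of $\nu$'s and $1$'s --- which only increases the right-hand side. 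Expanding $A^2$ introduces a second copy $y'$ of $y$; I then repeat the same three moves (group by the factors not involving $x$, apply Cauchy--Schwarz in $x$, majorize the resulting nonnegatively-weighted weight by $\nu$'s), which introduces a second copy $x'$ of $x$ and yields
\[
|F|^4\le(1+o(1))\,\EE_{x,x',y,y',z,z'}\Bigl[\,M\cdot\!\!\!\prod_{i,j\in\{0,1\}}\!\!\!\bigl(\nu(x^{(i)},y^{(j)})-1\bigr)\Bigr],
\]
where $(x^{(0)},x^{(1)})=(x,x')$, $(y^{(0)},y^{(1)})=(y,y')$, and $M$ is a genuine product of $\nu$-weights on a subset of the eight $X$--$Z$ and $Y$--$Z$ edges of $K_{2,2,2}$ on vertex classes $\{x,x'\},\{y,y'\},\{z,z'\}$. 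Crucially, no $h_i$ survives, and the duplications never place two $\nu$-factors on a common edge (the four factors $\nu(x^{(i)},y^{(j)})-1$ sit on distinct edges), so we stay within the exponent-$\le 1$ linear forms condition.

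To finish, expand $\prod_{i,j\in\{0,1\}}(\nu(x^{(i)},y^{(j)})-1)=\sum_{S\subseteq\{0,1\}^2}(-1)^{|S|}\prod_{(i,j)\in S}\nu(x^{(i)},y^{(j)})$. For each $S$ the corresponding term is the expectation of a product of $\nu$-weights over a subgraph of $K_{2,2,2}$, hence equals $1+o(1)$ by the $3$-linear forms condition, uniformly over the finitely many $S$; since $\sum_{S\subseteq\{0,1\}^2}(-1)^{|S|}=0$, the alternating sum is $o(1)$, so $|F|^4=o(1)$ and $F=o(1)$. The variant with some $h_i$ equal to $\tg$ follows from the identical argument, using $\tg\le 1$ wherever $g\le\nu$ was used. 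The delicate point --- and what I expect to be the crux --- is exactly the order of operations in the previous paragraph: one must replace an $h_i$ by $\nu$ (or $1$) only at the moment it multiplies a nonnegative square, so that the signed factor $(\nu(x,y)-1)$ is never used to dominate an unbounded $g$; carrying out the Cauchy--Schwarz steps in any other order would leave either $\nu^2$-terms or absolute values of $(\nu-1)$-type quantities, neither of which the (exponent-$\le 1$) linear forms condition can control.
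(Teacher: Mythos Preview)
Your proposal is correct and follows essentially the same approach as the paper: two successive Cauchy--Schwarz steps, each time majorizing the nonnegative weight by $\nu$ (or $1$) only after it multiplies a square, then expanding the resulting product $\prod_{i,j}(\nu(x^{(i)},y^{(j)})-1)$ and invoking the $3$-linear forms condition term by term. The only difference is cosmetic: the paper first groups the $x$-dependent factors and squares the inner $\EE_x$ (duplicating $x$ to $x'$), then does the same in $y$, whereas you duplicate $y$ first and $x$ second; by the symmetry of the roles of $x$ and $y$ in the expression this is immaterial.
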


\begin{proof}
We give the proof when none of the $g$ factors are replaced. The other
  cases require only a simple modification. By the Cauchy-Schwarz
  inequality, we have
  \begin{align*}
  &\abs{\EE_{x,y,z,z'}[(\nu(x,y) -
  1)g(x,z)g(x,z')g(y,z)g(y,z')]}^2
  % \\
  % &= (\EE_{y,z,z'}[\EE_x[(\nu(x,y) -
  % 1)g(x,z)g(x,z')]g(y,z)g(y,z')])^2
  \\
  &\leq \EE_{y,z,z'}[(\EE_x[(\nu(x,y) -
  1)g(x,z)g(x,z')])^2g(y,z)g(y,z')] \ \EE_{y,z,z'}[g(y,z)g(y,z')]
  \\
    &\leq \EE_{y,z,z'}[(\EE_x[(\nu(x,y) -
  1)g(x,z)g(x,z')])^2\nu(y,z)\nu(y,z')] \ \EE_{y,z,z'}[\nu(y,z)\nu(y,z')].
  \end{align*}
  The second factor is at most $1 + o(1)$ by the linear forms
  condition. So it remains to analyze the first factor. We have, by
  another application of the Cauchy-Schwarz inequality,
  \begin{align*}
    &
    \abs{\EE_{y,z,z'}[(\EE_x[(\nu(x,y) -
    1)g(x,z)g(x,z')])^2\nu(y,z)\nu(y,z')]}^2
    \\
    & = \abs{\EE_{x,x',y,z,z'}[(\nu(x,y) -
    1)(\nu(x',y) - 1)g(x,z)g(x,z') g(x',z)g(x',z')\nu(y,z)\nu(y,z')]}^2
    \\
    & = \abs{\EE_{x,x',z,z'}[\EE_y[(\nu(x,y) -
    1)(\nu(x',y) - 1) \nu(y,z)\nu(y,z')]g(x,z)g(x,z')
    g(x',z)g(x',z')]}^2
    \\
    & \leq \EE_{x,x',z,z'}[(\EE_y[(\nu(x,y) -
    1)(\nu(x',y) - 1) \nu(y,z)\nu(y,z')])^2 g(x,z)g(x,z')
    g(x',z)g(x',z')] \\
    &\qquad \cdot \EE_{x,x',z,z'} [g(x,z)g(x,z')
    g(x',z)g(x',z')]
    \\
    & \leq \EE_{x,x',z,z'}[(\EE_y[(\nu(x,y) -
    1)(\nu(x',y) - 1) \nu(y,z)\nu(y,z')])^2 \nu(x,z)\nu(x,z')
    \nu(x',z)\nu(x',z')] \\
    &\qquad \cdot \EE_{x,x',z,z'} [\nu(x,z)\nu(x,z')
    \nu(x',z)\nu(x',z')].
  \end{align*}
  Using the $3$-linear forms condition, the second factor is $1 + o(1)$
  and the first factor is $o(1)$ (expand everything and observe that
  all the terms are $1+o(1)$ and the signs make all the $1$'s
  cancel).
\end{proof}

\begin{proof}[Proof of Theorem~\ref{thm:tri-count-sparse}]
  If $\nu$ is identically $1$, we are in the dense setting, in which case the theorem
  follows from Proposition~\ref{prop:tri-count-dense}. Now we apply
  induction on the number of $\nu_{XY}, \nu_{XZ}, \nu_{YZ}$ which are
  identically 1. By relabeling if necessary, we may assume without
  loss of generality that $\nu_{XY}$ is not identically 1. We define
  auxiliary weighted graphs $\nu', g', \tg' \colon X \x Y \to
  [0,\infty)$ by
  \begin{align*}
    \nu'(x,y) &:= \EE_z[\nu(x,z)\nu(y,z)], \\
    g'(x,y) &:= \EE_z[g(x,z)g(y,z)], \\
    \tg'(x,y) &:= \EE_z[\tg(x,z)\tg(y,z)].
  \end{align*}
  %Here $\EE_z$ means taking the expectation over $z \in Z$ uniformly.
  We refer to this step as \emph{densification}. The idea is that even
  though $\nu$ and $g$ are possibly unbounded, the new weighted
  graphs $\nu'$ and $g'$ behave like dense graphs. The weights on
  $\nu'$ and $g'$ are not necessarily bounded by $1$, but they almost
  are. We cap the weights by setting $g'_{\wedge 1} := \min\{g', 1\}$
  and $\nu'_{\wedge 1} := \min\{\nu', 1\}$ and show that
  the capping has negligible effect. We have
  \begin{equation} \label{eq:count-init-decomp}
    \EE[g(x,y)g(x,z)g(y,z) -
    \tg(x,y)\tg(x,z)\tg(y,z)]
    = \EE[g g' - \tg \tg']
    = \EE[g(g' - \tg')] + \EE[(g-\tg) \tg'],
  \end{equation}
  where the first expectation is taken over $x \in X, y \in Y, z \in
  Z$ and the other expectations are taken over $X \x Y$ (we will use
  these conventions unless otherwise specified).
The second term on the right-hand side of \eqref{eq:count-init-decomp}
equals $\EE[(g(x,y) - \tg(x,y))\tg(x,z)\tg(y,z)]$ and its absolute
value is at most $\norm{g - \tg}_\square = o(1)$ (here we
use $0 \leq \tg \leq 1$ as in the proof of
Proposition~\ref{prop:tri-count-dense}). So it remains to bound the
first term on the right-hand side of \eqref{eq:count-init-decomp}. By the Cauchy-Schwarz inequality, we have
\begin{align*}
(\EE[g(g' - \tg')])^2
\leq \EE[g(g' - \tg')^2] \ \EE[g]
&\leq \EE[\nu(g' - \tg')^2]\ \EE[\nu]
\\&=  \EE_{x,y}[\nu(x,y)(\EE_z[g(x,z)g(y,z) - \tg(x,z)\tg(y,z)])^2] \
\EE_{x,y}[\nu(x,y)].
\end{align*}
The second factor is $1 + o(1)$ by the linear forms condition. By Lemma~\ref{lem:graph-slf}, the
first factor differs from
\begin{equation} \label{eq:densify-sq}
\EE_{x,y}[(\EE_z[g(x,z)g(y,z) - \tg(x,z)\tg(y,z)])^2] = \EE[(g' - \tg')^2]
\end{equation}
by $o(1)$ (take the difference, expand the square, and then apply Lemma~\ref{lem:graph-slf} term-by-term).

The $3$-linear forms condition implies that $\EE[\nu'] = 1+o(1)$ and
$\EE[\nu'^2] = 1+o(1)$. Therefore, by the Cauchy-Schwarz inequality, we have
\begin{equation} \label{eq:nu'-1-sq}
(\EE[\sabs{\nu' - 1}])^2 \leq \EE[(\nu' - 1)^2] = o(1).
\end{equation}
We want to show
that \eqref{eq:densify-sq} is $o(1)$. We have
\begin{equation} \label{eq:cap-split}
\EE[(g' - \tg')^2]
= \EE[(g' - \tg')(g' - g'_{\wedge 1})] +
\EE[(g' - \tg')(g'_{\wedge 1} - \tg')].
\end{equation}
Since $0 \leq g' \leq \nu'$, we have
\begin{equation} \label{eq:cutoff-bound}
 0 \leq g' - g'_{\wedge 1} = \max\{g' - 1, 0\} \le \max\{\nu' - 1, 0\}
 \leq \sabs{\nu' - 1}.
\end{equation}
Using \eqref{eq:nu'-1-sq} and \eqref{eq:cutoff-bound}, the
absolute value of the first term on the right-hand side of
\eqref{eq:cap-split} is at most
\[
\EE[(\nu'+1)\sabs{\nu'-1}] = \EE[(\nu'-1)\sabs{\nu'-1}] + 2
\EE[\sabs{\nu'-1}] = o(1).
\]

Next, we claim that
\begin{equation}
  \label{eq:densified-cut}
\norm{g'_{\wedge 1} - \tg'}_\square = o(1).
\end{equation}
Indeed, for any $A \subseteq X$ and $B \subseteq Y$, we have
\[
\EE_{x,y}[(g'_{\wedge 1} - \tg')(x,y) 1_A(x)1_B(y)]
= \EE[(g'_{\wedge 1} - \tg') 1_{A \x B}]
= \EE[(g'_{\wedge 1} - g') 1_{A \x B}] + \EE[(g' - \tg') 1_{A \x B}].
\]
By \eqref{eq:cutoff-bound} and \eqref{eq:nu'-1-sq}, the absolute value
of the first term is at most $\EE[\abs{\nu' - 1}] = o(1)$. The
second term can be rewritten as
\[
\EE_{x,y,z}[1_{A\x B}(x,y) g(x,z)g(y,z) - 1_{A\x B}(x,y)\tg(x,z) \tg(y,z)],
\]
which is $o(1)$ by the induction hypothesis (replace $\nu_{XY}, g_{XY}, \tg_{XY}$
by $1$, $1_{A\x B}$, $1_{A\x B}$, respectively, and note that this increases the number of $\{\nu_{XY}, \nu_{XZ},
\nu_{YZ}\}$ which are identically $1$). This proves \eqref{eq:densified-cut}.

We now expand the second term on the right-hand side of
\eqref{eq:cap-split} as
\begin{equation} \label{eq:cap-split-2}
\EE[(g' - \tg')(g'_{\wedge 1} - \tg')]
= \EE[g' g'_{\wedge 1}] - \EE[g'\tg'] - \EE[\tg'g'_{\wedge1}] + \EE[\tg'^2].
\end{equation}
We claim that each of the expectations on the right-hand side is
$\EE[(\tg')^2] + o(1)$. Indeed, we have
\[
\EE[g' g'_{\wedge 1}] - \EE[(\tg')^2] = \EE_{x,y,z}[g'_{\wedge 1}(x,y)
g(x,z)g(y,z) - \tg'(x,y) \tg(x,z)\tg(y,z)],
\]
which is $o(1)$ by the induction hypothesis (replace
$\nu_{XY}, g_{XY}, \tg_{XY}$ by $1, g'_{\wedge1}, \tg'$, respectively,
which by \eqref{eq:densified-cut} satisfies $\norm{g'_{\wedge 1} - \tg'}_\square = o(1)$,
and note that this increases the number of $\{\nu_{XY}, \nu_{XZ},
\nu_{YZ}\}$ which are identically $1$). One can similarly show that the other
expectations on the right-hand side of \eqref{eq:cap-split-2} are also
$\EE[(\tg')^2] + o(1)$. Thus \eqref{eq:cap-split-2} is $o(1)$ and the
theorem follows.
\end{proof}

%\subsection*{Hypergraph counting lemma}

The main difficulty in extending Theorem~\ref{thm:tri-count-sparse} to hypergraphs is
notational. As discussed in Section \ref{sec:rel-sz}, to study $k$-APs, we
consider $(k-1)$-uniform $k$-partite weighted hypergraphs. The
vertex sets will be denoted $X_1, \dots, X_k$ (in application $X_i =
\ZZ_N$ for all $i$). We write $X_{-i} := X_1 \x \cdots \x X_{i-1} \x
X_{i+1} \x \cdots \x X_k$ and $x_{-i} := (x_1, \dots, x_{i-1}, x_{i+1},
\dots, x_k)$ for any $x = (x_1, \dots, x_k) \in X_1 \x
\cdots \x X_k$. Then a weighted hypergraph $g$ consists of functions
$g_{-i} \colon X_{-i} \to
\RR$ for each $i = 1, \dots, k$. As before, we drop the subscripts if
they are clear from context. We write $\norm{g}_\square =
\max\{\norm{g_{-1}}_\square, \dots, \norm{g_{-k}}_\square\}$, where $\norm{g_{-i}}_\square$ is
the cut norm of $g_{-i}$ defined in \eqref{eq:hyp-cut-norm}.

The appropriate generalization of the $3$-linear forms condition involves counts for the $2$-blow-up of the simplex $K_k^{(k-1)}$. We say that a weighted
hypergraph $\nu$ satisfies the \emph{$k$-linear forms condition} (the
hypergraph version of Definition~\ref{def:Z-lfc}) if
\begin{equation*}
  \label{eq:hyp-lfc}
  \EE_{x_1^{(0)}, x_1^{(1)} \in X_1, \dots, x_k^{(0)},x_k^{(1)} \in X_k} \Bigl[
  \prod_{j=1}^k \prod_{\omega \in \{0,1\}^{[k]\setminus\{j\}}} \nu(x_{-j}^{(\omega)})
  \Bigr]  = 1 + o(1)
\end{equation*}
and also the same statement holds if any subset of the
$\nu$ factors (there are $k2^{k-1}$ such factors) are deleted. Here $x_{-j}^{(\omega)} :=
(x_1^{(\omega_1)}, \dots, x_{j-1}^{(\omega_{j-1})},
x_{j+1}^{(\omega_{j+1})}, \dots, x_{k}^{(\omega_{k})}) \in
X_{-j}$.

The following theorem
generalizes Theorem~\ref{thm:tri-count-sparse}.

\begin{theorem}[Relative simplex counting lemma]
  \label{thm:hyp-count}
  Let $\nu, g, \tg$ be weighted $(k-1)$-uniform $k$-partite weighted
  hypergraphs on $X_1 \cup \cdots \cup X_k$. Assume that $\nu$
  satisfies the $k$-linear forms condition, $0 \leq g \leq \nu$ and $0
  \leq \tg \leq 1$. If $\norm{g - \tg}_\square = o(1)$, then
  \[
  \abs{\EE_{x_1 \in X_1, \dots, x_k \in X_k} [g(x_{-1}) g(x_{-2})
    \cdots g(x_{-k}) - \tg(x_{-1}) \tg(x_{-2})
    \cdots \tg(x_{-k})]} = o(1).
  \]
\end{theorem}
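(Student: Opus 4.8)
The plan is to transcribe the proof of Theorem~\ref{thm:tri-count-sparse} essentially verbatim, with the three vertex classes $X,Y,Z$ replaced by $X_1,\dots,X_k$ and with the ``missing coordinate'' $Z$ of the triangle case now played by the coordinate $k$ (after relabeling). I would induct on the number of slots $\nu_{-i}$ that are identically $1$. For the base case, where $\nu$ is identically $1$ in every slot and hence $0\le g\le 1$, one is in the dense setting: telescoping the swap $g(x_{-i})\to\tg(x_{-i})$ over $i=1,\dots,k$, each step is controlled after fixing $x_i$ by the relaxed form of the cut-norm definition~\eqref{eq:hyp-cut-norm} --- the remaining $k-1$ factors, with $x_i$ frozen, become $[0,1]$-valued test functions of the coordinates they actually depend on, and multilinearity lets us restrict to $\{0,1\}$-valued ones --- so the total error is at most $k\norm{g-\tg}_\square=o(1)$.

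For the inductive step, relabel so that $\nu_{-k}\not\equiv 1$ and \emph{densify} in the $k$-th direction: writing $x_{-i}^{w}$ for the tuple $x_{-i}$ with its $k$-th entry $x_k$ replaced by $w$, set
\[
g'(x_{-k}):=\EE_{w\in X_k}\Bigl[\,\prod_{i=1}^{k-1}g(x_{-i}^{w})\,\Bigr],\qquad x_{-k}=(x_1,\dots,x_{k-1}),
\]
and define $\nu',\tg'\colon X_{-k}\to[0,\infty)$ in the same way; for $k=3$ this is exactly $g'(x,y)=\EE_z[g(x,z)g(y,z)]$. A change of name $w\to x_k$ gives the identity $\EE_{x_1,\dots,x_k}[\prod_{i=1}^k g(x_{-i})]=\EE_{x_{-k}}[g(x_{-k})\,g'(x_{-k})]$, and likewise for $\tg$, so the quantity to estimate becomes $\EE[g(g'-\tg')]+\EE[(g-\tg)\tg']$ over $X_{-k}$. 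Re-expanding $\tg'$, the second term is $\EE_{x_1,\dots,x_k}[(g(x_{-k})-\tg(x_{-k}))\prod_{i<k}\tg(x_{-i})]$, bounded by $\norm{g-\tg}_\square=o(1)$ since each $\tg(x_{-i})\in[0,1]$. For the first term, Cauchy--Schwarz gives $(\EE[g(g'-\tg')])^2\le\EE[\nu(g'-\tg')^2]\,\EE[\nu]$ with $\EE[\nu_{-k}]=1+o(1)$, and by the strong linear forms lemma stated below $\EE[\nu_{-k}(g'-\tg')^2]=\EE[(g'-\tg')^2]+o(1)$ (expand the square, delete the $\nu_{-k}-1$ factor term by term). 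Hence it suffices to show $\EE[(g'-\tg')^2]=o(1)$.

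This last step is a word-for-word copy of the graph argument. The $k$-linear forms condition gives $\EE[\nu']=1+o(1)$ and $\EE[\nu'^2]=1+o(1)$ (both are sub-blow-up counts of $K_k^{(k-1)}$), so $\EE[\sabs{\nu'-1}]\le\EE[(\nu'-1)^2]^{1/2}=o(1)$. Writing $g'_{\wedge 1}$ for $g'$ truncated at $1$ and using $0\le g'\le\nu'$ and $0\le\tg'\le 1$, split $\EE[(g'-\tg')^2]=\EE[(g'-\tg')(g'-g'_{\wedge 1})]+\EE[(g'-\tg')(g'_{\wedge 1}-\tg')]$; the first summand is at most $\EE[(\nu'+1)\sabs{\nu'-1}]=o(1)$. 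For the second, first show $\norm{g'_{\wedge 1}-\tg'}_\square=o(1)$ by subtracting off a term bounded by $\EE[\sabs{\nu'-1}]$ and re-expanding the remainder as a $k$-fold expectation whose $-k$ slot is a $\{0,1\}$-valued function, which is $o(1)$ by the induction hypothesis: the $\nu$-hypergraph bounding it has $\nu_{-k}\equiv 1$, hence strictly more identically-$1$ slots, and it still satisfies the $k$-linear forms condition, being an ``erasure'' of $\nu$. Finally, expanding $\EE[(g'-\tg')(g'_{\wedge 1}-\tg')]$ into four terms and applying the induction hypothesis once to each (after un-expanding $g'$ or $\tg'$ so that $g'_{\wedge 1}$ or $\tg'$ sits in the $-k$ slot), each term equals $\EE[\tg'^2]+o(1)$, so the sum is $o(1)$. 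This closes the induction.

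The only genuinely new ingredient is the hypergraph \emph{strong linear forms lemma}: under the hypotheses above,
\[
\EE\Bigl[(\nu(x_{-k})-1)\prod_{i=1}^{k-1}g(x_{-i}^{w})\,g(x_{-i}^{w'})\Bigr]=o(1)
\]
(expectation over $x_1,\dots,x_{k-1},w,w'\in X_k$), and the same holds with any subset of the $g$'s replaced by $\tg$. Its proof is the higher-uniformity analogue of Lemma~\ref{lem:graph-slf}: apply Cauchy--Schwarz $k-1$ times, once for each of the coordinates $1,\dots,k-1$, at each step squaring and bounding the newly created factor-free expectation by $\nu$ (which is $1+o(1)$ by the linear forms condition). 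After all $k-1$ steps one is left with $2^{k-1}$ copies of $\nu(x_{-k})-1$ together with the remaining plain $\nu$-factors filling out the $2$-blow-up of $K_k^{(k-1)}$ on all $2k$ doubled variables; expanding the product of the $(\nu-1)$'s writes this as a signed combination of sub-blow-up counts, each $1+o(1)$, with the signs cancelling the $1$'s. I expect this lemma --- specifically the index bookkeeping needed to check that every weight produced along the successive Cauchy--Schwarz steps really does sit inside the $2$-blow-up of $K_k^{(k-1)}$, so that the $k$-linear forms condition applies --- to be the only real obstacle; everything else is a faithful, if notation-heavy, transcription of the graph case, exactly as the authors caution.
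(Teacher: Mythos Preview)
Your proposal is correct and follows essentially the same approach as the paper: induction on the number of slots where $\nu_{-i}\equiv 1$, densification in a non-trivial slot via $g'(x_{-k})=\EE_{x_k}[\prod_{i<k}g(x_{-i})]$, the Cauchy--Schwarz reduction to $\EE[(g'-\tg')^2]$ using a hypergraph strong linear forms lemma (proved by $k-1$ successive Cauchy--Schwarz steps), and the truncation argument to invoke the induction hypothesis. The paper gives exactly this outline (with the roles of the indices $1$ and $k$ swapped) and defers the bookkeeping you flag to the reader and to~\cite{CFZrelsz}.
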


The proof of Theorem~\ref{thm:hyp-count} is a straightforward generalization of
the proof of Theorem~\ref{thm:tri-count-sparse}. We simply point out the necessary
modifications and leave the reader to figure out the details (a full proof can be found
in our paper \cite{CFZrelsz}, but it is perhaps easier to reread the graph case and think about the small changes that need to
be made).

The proof proceeds by induction on the number of $\nu_{-1}, \dots,
\nu_{-k}$ which are not identically 1. When $\nu = 1$, we are in the
dense setting and the proof of Proposition~\ref{prop:tri-count-dense}
easily extends. Now assume that $\nu_{-1}$ is not identically $1$. We
have extensions of Lemmas \ref{lem:graph-nu-1} and
\ref{lem:graph-slf}, where in the proof we have to apply the
Cauchy-Schwarz inequality $k-1$ times in succession. For the
densification step, we define $\nu', g', \tg' \colon X_{-1} \to [0,\infty)$ by
\begin{align*}
\nu'(x_{-1}) &= \EE_{x_1 \in X_1}[ \nu(x_{-2}) \cdots
\nu(x_{-k})], \\
g'(x_{-1}) &= \EE_{x_1 \in X_1}[ g(x_{-2}) \cdots
g(x_{-k})], \\
\tg'(x_{-1}) &= \EE_{x_1 \in X_1}[ \tg(x_{-2}) \cdots
\tg(x_{-k})].
\end{align*}
The rest of the proof works with minimal changes.

\section{Proof of the relative Szemer\'edi theorem} \label{sec:pf-rel-sz}

We are now ready to prove the relative Szemer\'edi theorem using the dense model theorem and the
counting lemma following the outline given in Section \ref{sec:rel-sz}.

\begin{proof}[Proof of Theorem~\ref{thm:rel-sz}]
  The $k$-linear forms condition implies that $\norm{\nu -
    1}_{\square, k-1} = o(1)$ (by a sequence of $k-1$ applications of
  the Cauchy-Schwarz inequality, following
  Lemma~\ref{lem:graph-nu-1}).  By the dense model theorem,
  Theorem~\ref{thm:dense-model}, we can find $\tf \colon \ZZ_N \to
  [0,1]$ so that $\snorm{f - \tf}_{\square,k-1} = o(1)$.

  Let $X_1 = X_2 = \cdots = X_k = \ZZ_N$. For each $j = 1, \dots, k$,
  define the linear form $\psi_j \colon X_{-j} \to \ZZ_N$ by
  \[
  \psi_j (x_1, \dots, x_{j-1}, x_{j+1}, \dots, x_k) := \sum_{i \in
    [k]\setminus\{j\}}(j-i) x_i.
  \]
  Construct $(k-1)$-uniform $k$-partite weighted hypergraphs $\nu, g,
  \tg$ on $X_1 \cup \cdots \cup X_k$ by setting
  \[
  \nu_{-j}(x_{-j}) := \nu(\psi_j(x_{-j})), \quad
  g_{-j}(x_{-j}) := f(\psi_j(x_{-j})), \quad
  \tg_{-j}(x_{-j}) := \tf(\psi_j(x_{-j})) \quad
  \]
  (in the first definition, the left $\nu_{-j}$ refers to the weighted
  hypergraph and the second $\nu$ refers to the given function on
  $\ZZ_N$). We claim that
  \begin{equation} \label{eq:nu-cut-Z-hyp}
  \snorm{\nu_{-j} - 1}_\square = \snorm{\nu - 1}_{\square, k-1}
  \end{equation}
  and
  \begin{equation} \label{eq:g-f-cut-Z-hyp}
  \snorm{g_{-j} - \tg_{-j}}_\square = \snorm{f - \tf}_{\square, k-1}
  \end{equation}
  for every $j$ (in both \eqref{eq:nu-cut-Z-hyp} and
  \eqref{eq:g-f-cut-Z-hyp} the left-hand side refers to the hypergraph
  cut norm \eqref{eq:hyp-cut-norm} while the right-hand side refers to
  the cut norm \eqref{eq:Z-cut-norm} for functions on $\ZZ_N$). We
  illustrate \eqref{eq:g-f-cut-Z-hyp} in the case when $k=j=4$ (the
  full proof is straightforward). The
  left-hand side of \eqref{eq:g-f-cut-Z-hyp} equals
  \begin{equation}\label{eq:3-cut-norm-a}
    \sup_{A_1, A_2, A_3 \subseteq \ZZ_N^2} \abs{\EE_{x_1, x_2, x_3 \in
        \ZZ_N}[(f - \tf)(3x_1 + 2x_2 + x_3)
      1_{A_1}(x_2,x_3)1_{A_2}(x_1,x_3) 1_{A_3}(x_1,x_2)]},
  \end{equation}
  while the right-hand side of \eqref{eq:g-f-cut-Z-hyp} equals
  \begin{equation} \label{eq:3-cut-norm-b}
    \sup_{B_1, B_2, B_3 \subseteq \ZZ_N^2} \abs{\EE_{x_1, x_2, x_3 \in
        \ZZ_N}[(f - \tf)(x_1 + x_2 + x_3)
      1_{B_1}(x_2,x_3)1_{B_2}(x_1,x_3) 1_{B_3}(x_1,x_2)]}.
  \end{equation}
  These two expressions are equal\footnote{Here we use the
    assumption in the footnote to Definition~\ref{def:Z-lfc} that $N$
    is coprime to $(k-1)!$. Without this assumption, it can be shown
    that the two norms~\eqref{eq:3-cut-norm-a} and
    \eqref{eq:3-cut-norm-b} differ by at most a constant factor
    depending on $k$, which would also suffice for what follows.} up to a change
  of variables $3x_1 \leftrightarrow x_1$ and $2x_2 \leftrightarrow
  x_2$.

  It follows from \eqref{eq:g-f-cut-Z-hyp} that $\snorm{g -
    \tg}_\square = \snorm{f - \tf}_{\square, k-1} = o(1)$. Moreover, the
  $k$-linear forms condition for $\nu \colon \ZZ_N \to [0,\infty)$
  translates to the $k$-linear forms condition for the weighted
  hypergraph $\nu$. It follows from the counting lemma,
  Theorem~\ref{thm:hyp-count}, that
  \begin{equation} \label{eq:appy-count-lem}
    \EE_{x_1, \dots, x_k \in \ZZ_N^k}[g_{-1}(x_{-1}) \cdots
    g_{-k}(x_{-k})] = \EE_{x_1, \dots, x_k \in \ZZ_N^k}[\tg_{-1}(x_{-1})
    \cdots \tg_{-k}(x_{-k})] + o(1).
  \end{equation}
  The left-hand side is equal to
  \[
  \EE_{x_1, \dots, x_k \in \ZZ_N^k}[f(\psi_1(x_{-1})) \cdots
  f(\psi_k(x_{-k}))]
  = \EE_{x,d\in\ZZ_N}[f(x)f(x+d) \cdots f(x+(k-1) d)],
  \]
  which can be seen by setting $x = \psi_1(x_{-1})$ and $d = x_1 +
  \cdots + x_k$ so that $\psi_j (x_{-j}) = x + (j-1)d$. A similar
  statement holds for the right-hand side of
  \eqref{eq:appy-count-lem}. So \eqref{eq:appy-count-lem} is
  equivalent to
  \[
  \EE_{x,d\in\ZZ_N}[f(x)f(x+d) \cdots f(x+(k-1) d)]
  = \EE_{x,d\in\ZZ_N}[\tf(x)\tf(x+d) \cdots \tf(x+(k-1) d)] + o(1),
  \]
  which is at least $c(k,\delta) - o_{k,\delta}(1)$ by
  Theorem~\ref{thm:sz-weighted}, as desired.
\end{proof}

\section{Constructing the majorant} \label{sec:maj}

In this section, we use the relative Szemer\'edi theorem
to prove the Green-Tao theorem. To do this, we must construct a
majorizing measure for the primes that satisfies the linear
forms condition.

Rather than considering the set of primes itself, we put weights on the
primes, a common technique in analytic number theory. The weights we
use will be related to the well-known von Mangoldt function $\Lambda$.
This is defined by $\Lambda(n) = \log p$ if $n
= p^k$ for some prime $p$ and positive integer $k$ and $\Lambda(n) = 0$ if $n$
is not a power of a prime (actually, the higher powers $p^2$, $p^3,
\dots$ play no role here and we will soon discard them from $\Lambda$).
That these are natural weights to consider follows from the
observation that the Prime Number Theorem is
equivalent to $\sum_{n \leq N} \Lambda(n) = (1 + o(1))N$.

A difficulty with using $\Lambda$ is that it is biased on certain residue
classes. For example, every prime other than $2$ is odd. This
prevents us from making any pseudorandomness claims unless we can
somehow remove these biases. This is achieved using the
\emph{W-trick}. Let $w = w(N)$ be any function that tends to infinity
slowly with $N$. Let $W = \prod_{p \leq w} p$ be the product of primes
up to $w$. The trick for avoiding biases mod $p$ for any $p \leq w$ is to consider only those
primes which are congruent to $1$ (mod $W$). In keeping with this idea, we define the modified von Mangoldt
function by
\[
\wt \Lambda(n) := \begin{cases}
  \frac{\phi(W)}{W} \log (W n + 1) & \text{when $Wn + 1$ is prime}, \\
  0 & \text{otherwise}.
\end{cases}
\]
The factor $\phi(W)/W$ is present since exactly $\phi(W)$ of the $W$
residue classes mod $W$ have infinitely many primes and a strong form of Dirichlet's
theorem\footnote{In fact,
Dirichlet's theorem, or even the Prime Number Theorem, are not
necessary to prove the Green-Tao theorem, though we assume them to simplify the exposition. Indeed, a
weaker form of the Prime Number Theorem asserting that there are at
least $c N/\log N$ primes up to $N$ for some $c > 0$ suffices for our
needs (this bound was first proved by Chebyshev and a famous short
proof was subsequently found by Erd\H{o}s; see
\cite[Ch.~2]{AZ10}). Furthermore, in place of Dirichlet's theorem, a
simple pigeonhole argument shows that for each $W$, some residue class
$b \pmod W$ contains many primes (whereas we use Dirichlet's theorem to take
$b=1$). The proof presented here can easily be modified to deal with
general $b$, though the notation gets a bit more cumbersome as $b$
could vary with $W$. An analysis of this sort is necessary to prove a Szemer\'edi-type statement for the primes (see Section \ref{sec:conclusion}),
since we do not then know how our subset of the primes
is distributed on congruence classes.} tells us that the primes are equidistributed among these
$\phi(W)$ residue classes, i.e., $\sum_{n \leq N} \wt \Lambda(n) = (1
+ o(1))N$ as long as $w$ grows slowly enough with $N$. From now on, we
will work with $\wt \Lambda$ rather than $\Lambda$. Our main goal is to
prove the following result, which says that there is a majorizing measure for $\wt \Lambda$ which
satisfies the linear forms condition.

\begin{proposition}
  \label{prop:majorize-Lambda}
  For every $k \geq 3$, there exists $\delta_k > 0$ such that for every sufficiently large $N$ there exists a function $\nu \colon
  \ZZ_N \to [0,\infty)$ satisfying the $k$-linear forms condition and
  $\nu(n) \geq \delta_k \wt \Lambda(n)$ for all $N/2 \leq n
  < N$.
\end{proposition}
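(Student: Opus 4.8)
The plan is to take $\nu$ to be a Goldston--Y\i ld\i r\i m truncated divisor sum, in the smoothed form due to Tao. Fix once and for all a smooth function $\chi \colon \RR \to \RR$ supported on $[-1,1]$ with $\chi(0)=1$, and write $c_\chi := \int_0^1 |\chi'(t)|^2\,dt$, a positive constant. Let $w = w(N)\to\infty$ slowly (for instance $w=\log\log N$), put $W = \prod_{p\le w} p$, and set $R := N^{\eta_k}$ for a small constant $\eta_k>0$ to be fixed at the very end. For a positive integer $m$ write
\[
\Lambda_{\chi,R}(m) := \log R \Bigl( \sum_{d \mid m} \mu(d)\, \chi\bigl(\tfrac{\log d}{\log R}\bigr) \Bigr)^2 ,
\]
identify $\ZZ_N$ with $\{1,\dots,N\}$, and define $\nu \colon \ZZ_N \to [0,\infty)$ by $\nu(n) := c_\chi^{-1}\tfrac{\phi(W)}{W}\,\Lambda_{\chi,R}(Wn+1)$. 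I would first dispose of the majorization, which is easy: if $N/2\le n<N$ then $Wn+1 > N/2 > R$ for $N$ large, so when $Wn+1$ is prime its only divisor $d\le R$ is $d=1$ and hence $\Lambda_{\chi,R}(Wn+1)=\chi(0)^2\log R=\eta_k\log N \ge \tfrac{\eta_k}{2}\log(Wn+1)$ for $N$ large (using $\log W = \sum_{p\le w}\log p = o(\log N)$ by Chebyshev's estimate, so $\log(Wn+1)=(1+o(1))\log N$). Since $\wt\Lambda(n)=\tfrac{\phi(W)}{W}\log(Wn+1)$ when $Wn+1$ is prime and $\wt\Lambda(n)=0$ otherwise, this yields $\nu(n)\ge\delta_k\wt\Lambda(n)$ with $\delta_k := \eta_k/(2c_\chi)$.

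Everything else is the $k$-linear forms condition~\eqref{eq:Z-lfc}. Since each exponent $n_{j,\omega}\in\{0,1\}$, the product in~\eqref{eq:Z-lfc} involves a sub-collection $\cF$ of the $k2^{k-1}$ linear forms $\psi_{j,\omega}(\vec x) = \sum_{i\ne j}(j-i)\,x_i^{(\omega_i)}$ in the $2k$ variables $x_1^{(0)},x_1^{(1)},\dots,x_k^{(0)},x_k^{(1)}$. Substituting the definition of $\nu$, expanding the square in each divisor sum, and interchanging the order of summation rewrites $\EE_{\vec x\in\ZZ_N^{2k}}\prod_{f\in\cF}\nu(\psi_f(\vec x))$ as $c_\chi^{-|\cF|}(\phi(W)/W)^{|\cF|}(\log R)^{|\cF|}$ times
\[
\sum_{(d_f,d_f')_{f\in\cF}} \Bigl(\prod_{f\in\cF}\mu(d_f)\mu(d_f')\,\chi\bigl(\tfrac{\log d_f}{\log R}\bigr)\chi\bigl(\tfrac{\log d_f'}{\log R}\bigr)\Bigr)\, \EE_{\vec x\in\ZZ_N^{2k}}\Bigl[\prod_{f\in\cF}\one\bigl([d_f,d_f']\mid W\psi_f(\vec x)+1\bigr)\Bigr],
\]
so the task reduces to counting $\vec x$ subject to prescribed divisibility conditions. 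The forms $\psi_f$ are pairwise non-proportional: if $j\ne j'$ then $\psi_{j',\omega'}$ carries a nonzero coefficient on a variable $x_j^{(\cdot)}$ that $\psi_{j,\omega}$ omits entirely, while if $j=j'$ and $\omega\ne\omega'$ then some $x_i^{(0)}$ versus $x_i^{(1)}$ distinguishes them; this pairwise independence is exactly what the sieve estimate needs. By the Chinese Remainder Theorem the inner count factors over primes into local densities; the $W$-trick makes every prime $p\le w$ inert, since $p\mid W$ forces $W\psi_f(\vec x)+1\not\equiv0\pmod p$, and the residual Euler product over $p>w$ is $\prod_{p>w}(1+O_k(p^{-2}))=1+o(1)$ thanks to non-proportionality; the prefactors $c_\chi^{-|\cF|}$, $(\phi(W)/W)^{|\cF|}$, $(\log R)^{|\cF|}$ are calibrated precisely so that the main term equals $1+o(1)$. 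The Chinese Remainder Theorem error terms (of size $O([d_f,d_f']/N)$ per prime, summed against $d_f,d_f'\le R$) contribute $O_k(R^{O_k(1)}/N)=o(1)$ once $\eta_k$ is taken small enough in terms of $k$ (say $\eta_k<(k2^{k+1})^{-1}$).

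Two technical wrinkles remain. First, $\ZZ_N$ is cyclic whereas the Goldston--Y\i ld\i r\i m estimate is naturally a count over a box in $\ZZ^{2k}$: evaluating $\nu(\psi_f(\vec x))$ requires the representative of $\psi_f(\vec x)\bmod N$ in $\{1,\dots,N\}$, and the integer lift of $\psi_f(\vec x)$ can be much larger than $N$. I would handle this by taking $N$ prime (permissible in the application) and partitioning $\ZZ_N^{2k}$ into $O_k(1)$ sub-boxes on each of which every lifted form $\psi_f(\vec x)$ stays within a single length-$N$ interval, so that $\psi_f(\vec x)\bmod N$ becomes a genuine integer affine form with the same (non-proportional) linear part; applying the sieve estimate on each piece and summing recovers the full count. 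The coprimality of $N$ with $(k-1)!$ from the footnote to Definition~\ref{def:Z-lfc} enters here, keeping each $j-i$ invertible mod $N$ so that the forms are unimodularly equivalent to coordinates. Second, tracking $c_\chi$ and the arithmetic factors through every case in which some $\nu$ factors are erased is bookkeeping rather than mathematics. The genuine obstacle is the middle step --- the Euler-product evaluation of the main term together with the bound on the error term, i.e.\ the Goldston--Y\i ld\i r\i m estimate in the required form --- which I would isolate as a standalone proposition (this is the content of Section~\ref{sec:verify-lfc}); the remainder is assembly.
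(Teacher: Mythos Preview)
Your approach is essentially the paper's, with one cosmetic difference and one technical loose end worth flagging.

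The cosmetic difference: the paper does not define $\nu$ by the divisor-sum formula on all of $\ZZ_N$, but piecewise, setting
\[
\nu(n) := \begin{cases}
\dfrac{\phi(W)}{W}\,\dfrac{\Lambda_{\chi,R}(Wn+1)^2}{c_\chi \log R} & N/2 \le n < N,\\[1ex]
1 & \text{otherwise.}
\end{cases}
\]
With this choice, on any region where some $\psi_f(\vec x)\bmod N$ falls outside $[N/2,N)$ the corresponding $\nu$-factor is literally $1$, so one only invokes the Goldston--Y\i ld\i r\i m estimate (your Section~\ref{sec:verify-lfc} proposition) on the surviving subset of forms. Your choice of using the formula everywhere also works, but then every factor is nontrivial on every piece, and after wraparound the affine shift in $\theta_f = W\psi_f + 1 - c_f W N$ varies from piece to piece; the estimate still applies because the linear parts are unchanged and the constant term remains $\equiv 1\pmod W$, but the piecewise definition spares you from tracking this.

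The loose end is your ``partitioning $\ZZ_N^{2k}$ into $O_k(1)$ sub-boxes on each of which every lifted form stays within a single length-$N$ interval.'' The regions $\{\vec x : \lfloor \psi_f(\vec x)/N\rfloor = c_f \text{ for all } f\}$ are indeed $O_k(1)$ in number, but they are polytopes cut out by linear inequalities, not products of intervals. The Goldston--Y\i ld\i r\i m estimate as stated (Proposition~\ref{prop:Lambda-lf}) applies only to genuine boxes $\prod_i I_i$. The paper's remedy is to partition $\ZZ_N^t$ into $Q^t$ axis-aligned boxes of side $\sim N/Q$ with $Q = Q(N)\to\infty$ slowly, call a box \emph{good} if every $\psi_f$ lands entirely inside or entirely outside $[N/2,N)$, apply the estimate on each good box, and show that at most an $O(1/Q)$ fraction of boxes are bad (while bad boxes contribute $O(1)$ each, by the same estimate with the crude bound $\nu\le 1 + \text{formula}$). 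Your sketch becomes correct once this refinement replaces the $O_k(1)$-piece claim; everything else matches the paper.
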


Using this majorant with the relative Szemer\'edi theorem, we obtain the Green-Tao theorem.

\begin{proof}[Proof of Theorem~\ref{thm:gt} assuming
  Proposition~\ref{prop:majorize-Lambda}]
  Define $f \colon \ZZ_N \to [0,\infty)$ by $f(n) = \delta_k \wt
  \Lambda(n)$ if $N/2 \leq n < N$ and $f(n) = 0$ otherwise. By
  Dirichlet's theorem, $\sum_{N/2 \leq n < N} f(n) = (1/2 +
  o(1)) \delta_k N$, so $\EE f \geq \delta_k /3$ for large $N$. Since
  $0 \leq f \leq \nu$ and $\nu$ satisfies the $k$-linear forms
  condition, it follows from the relative Szemer\'edi theorem,
  Theorem~\ref{thm:rel-sz}, that $\EE[f(x) f(x+d) \cdots f(x+(k-1)d)]
  \geq c(k, \delta_k/3) - o_{k,\delta}(1)$. Therefore, for sufficiently large $N$, we
  have $f(x) f(x+d) \cdots f(x+(k-1)d) > 0$ for some $N/2 \leq x < N$
  and $d \neq 0$ (the $d=0$ terms contribute negligibly to the
  expectation). Since $f$ is supported on $[N/2,N)$, we see that $x,
  x+d, \dots, x+(k-1)d$ is not only an AP in $\ZZ_N$ but also in
  $\ZZ$, i.e., has no wraparound issues. Thus $(x+jd)W+1$ for $j=0,\ldots,k-1$ is a $k$-AP of primes.
\end{proof}

How do we construct the majorant $\nu$ for $\wt \Lambda(n)$? Recall
that the M\"obius function $\mu$ is defined by $\mu(n) =
(-1)^{\omega(n)}$ when $n$ is square-free, where $\omega(n)$ is the
number of prime factors of $n$, and $\mu(n) = 0$ when $n$ is not
square-free.  The functions $\Lambda$ and $\mu$ are related by the
M\"obius inversion formula
\[
\Lambda(n) = \sum_{d \mid n} \mu(d) \log(n/d).
\]
In Green and Tao's original proof, the following truncated
version of $\Lambda$ (motivated by \cite{GY03}) was used to construct the majorant. For any $R > 0$, define
\[
\Lambda_R(n) := \sum_{\substack{d \mid n \\ d \leq R}} \mu(d) \log(R/ d).
\]
Observe that if $n$ has no prime divisors less than or equal to $R$, then
$\Lambda_R(n) = \log R$. Tao~\cite{Taonote} later simplified the proof by using the
following variant of $\Lambda_R$, where the restriction $d \leq R$
is replaced by a smoother cutoff.

\begin{definition}
  Let $\chi \colon \RR \to [0,1]$ be any smooth, compactly supported
  function. Define
  \[
  \Lambda_{\chi,R}(n) := \log R \sum_{d \mid n} \mu(d)
  \chi\paren{\frac{\log d}{\log R}}.
  \]
\end{definition}
In our application, $\chi$ will be supported on $[-1,1]$, so only
divisors $d$ which are at most $R$ are considered in the sum.  Note that $\Lambda_R$
above corresponds to $\chi(x) = \max\{1 - \sabs{x}, 0\}$, which is
not smooth. The following proposition, which we will prove in the next
section, gives a linear forms estimate for $\Lambda_{\chi, R}$.

\begin{proposition}[Linear forms estimate] \label{prop:Lambda-lf} Fix
  any smooth function $\chi \colon \RR \to
  [0,1]$ supported on $[-1,1]$. Let $m$ and $t$ be positive integers. Let $\psi_1, \dots,
  \psi_m \colon \ZZ^t \to \ZZ$ be fixed linear maps, with no two being
  multiples of each other. Assume that $R = o(N^{1/(10m)})$ grows
  with $N$ and $w$
  grows sufficiently slowly with $N$. Let $W := \prod_{p \leq w}
  p$. Write $\theta_i := W \psi_i + 1$. Let $B$ be a product
  $\prod_{i=1}^t I_i$, where each $I_i$ is a set of at least $R^{10m}$
  consecutive integers. Then
    \begin{equation} \label{eq:Lambda-lfc}
    \EE_{\bx \in B}[\Lambda_{\chi,R}(\theta_1(\bx))^2 \cdots
      \Lambda_{\chi,R}(\theta_m(\bx))^2]
    = (1 + o(1)) \paren{\frac{W c_\chi \log R}{\phi(W) }}^m,
  \end{equation}
  where $o(1)$ denotes a quantity tending to zero as $N \to \infty$
  (at a rate that may depend on $\chi$, $m$, $t$, $\psi_1, \dots, \psi_m$, $R$,
  and $w$), and
  $c_\chi$ is the normalizing factor
  \[
  c_\chi := \int_0^\infty \sabs{\chi'(x)}^2 \, dx.
  \]
\end{proposition}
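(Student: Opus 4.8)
The plan is to expand each square $\Lambda_{\chi,R}(\theta_i(\bx))^2$ using the definition of $\Lambda_{\chi,R}$, obtaining a sum over pairs of divisors, and then interchange the order of summation to first sum over $\bx \in B$. Writing $\Lambda_{\chi,R}(n) = \log R \sum_{d \mid n} \mu(d)\chi(\log d/\log R)$, the left-hand side of \eqref{eq:Lambda-lfc} becomes
\[
(\log R)^{2m} \sum_{d_1, d_1', \dots, d_m, d_m'} \mu(d_1)\mu(d_1')\cdots\mu(d_m)\mu(d_m') \prod_{i=1}^m \chi\!\paren{\tfrac{\log d_i}{\log R}}\chi\!\paren{\tfrac{\log d_i'}{\log R}} \cdot \EE_{\bx \in B}\Big[\prod_{i=1}^m \one[\mathrm{lcm}(d_i,d_i') \mid \theta_i(\bx)]\Big].
\]
Because $\chi$ is supported on $[-1,1]$, all the $d_i, d_i'$ are at most $R$, so each $\mathrm{lcm}(d_i,d_i') \le R^2$, and the total modulus $D := \prod_i \mathrm{lcm}(d_i,d_i')$ is at most $R^{2m}$, which is much smaller than $R^{10m} \le |I_j|$. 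First I would establish a local factor computation: since the $I_j$ are long intervals, $\EE_{\bx \in B}[\prod_i \one[\mathrm{lcm}(d_i,d_i')\mid\theta_i(\bx)]]$ is, up to a negligible error of order $O(D/R^{10m})$ per coordinate, equal to the density of solutions $\bx \bmod D$ to the system $\mathrm{lcm}(d_i,d_i') \mid W\psi_i(\bx)+1$. By CRT this density factors over primes $p \mid D$, and the key structural point (using that no two $\psi_i$ are proportional, so modulo any prime $p > $ some bound the relevant linear forms are pairwise independent) is that the local density at $p$ is a product of local factors $\alpha_p(d_i, d_i')$ that behave like $1/p$ when $p$ divides exactly one of the $\mathrm{lcm}$'s and involves no cross terms; crucially, since $p > w$ for every prime dividing any $d_i d_i'$ (the $W$-trick ensures $\gcd(d_i, W) = 1$ as $\mu(d_i)=0$ unless $d_i$ is squarefree and composed of primes $> w$, because $W\psi_i+1 \equiv 1 \bmod p$ for $p \le w$), the forms $W\psi_i + 1$ are units mod each such $p$ and the system decouples across the indices $i$.

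Next I would reduce the multiple divisor sum to a product of $m$ single-index Euler-product-type sums. After substituting the factored local densities, the sum becomes (approximately) $\prod_{i=1}^m \big[ (\log R)^2 \sum_{d,d'} \mu(d)\mu(d') \chi(\log d/\log R)\chi(\log d'/\log R) \frac{1}{\mathrm{lcm}(d,d')}\big]$ times corrections. The inner sum over $d, d'$ is a classical object: writing $e = \gcd(d,d')$, $d = ea$, $d' = eb$ with $\gcd(a,b)=1$, it becomes a sum whose Euler product is dominated by the $\frac{1}{p}$ terms and which, by a standard contour-integral / Mellin-transform argument (or the Goldston–Y\i ld\i r\i m method as presented in \cite{GY03, Taonote}), evaluates to $(1+o(1)) c_\chi \log R \cdot \prod_p(\text{local factor})$; combined with the arithmetic local factors from the previous paragraph, the Euler product over $p$ telescopes to exactly $W/\phi(W)$ per index $i$. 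Thus each bracketed factor is $(1+o(1)) \frac{W c_\chi \log R}{\phi(W)}$, and multiplying the $m$ of them gives the claimed main term.

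The main obstacle I expect is the analytic evaluation of the single-index sum, i.e., showing that $(\log R)^2 \sum_{d,d'}\mu(d)\mu(d')\chi(\tfrac{\log d}{\log R})\chi(\tfrac{\log d'}{\log R})\tfrac{\gcd(d,d')^?}{dd'}(\text{local factor})$ produces precisely the constant $c_\chi = \int_0^\infty |\chi'(x)|^2\,dx$. This is where the smoothness of $\chi$ is used: one writes $\chi(\log d/\log R) = \frac{1}{2\pi i}\int \hat\chi$-type representations or integrates by parts, expressing $\Lambda_{\chi,R}$ via a contour integral of $\frac{1}{\zeta}$, and the factor $|\chi'|^2$ emerges from a double integral after moving contours past the pole of $\zeta(s)$ at $s=1$ and using the Taylor expansion $\zeta(s) \sim 1/(s-1)$. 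Controlling the error terms requires the hypothesis $R = o(N^{1/(10m)})$ (so $D \le R^{2m}$ is a small power of $N$, keeping the equidistribution error $O(D/\min_j|I_j|)$ negligible) and $w \to \infty$ slowly (so that $\prod_{p \le w}(1 - 1/p)^{-1} = (1+o(1))\frac{W}{\phi(W)}$ absorbs cleanly and the contribution of primes $p \le w$, which are excluded from the $d$-sums, does not distort the Euler product). I would carry out the error analysis by crudely bounding the tail terms with $|\mu| \le 1$, $|\chi| \le 1$, and the divisor bound, which suffices since all moduli are $\le R^{2m} = N^{o(1)}$.
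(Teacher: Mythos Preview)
Your outline is correct and follows essentially the same route as the paper: expand into a divisor sum over $d_1,d_1',\dots,d_m,d_m'$, replace the box average by a density modulo $D$ (error $O(R^{-8m})$), factor over primes, use the $W$-trick to trivialize $p\le w$, and evaluate the remaining Euler product via a Mellin-type representation of $\chi$ together with the pole of $\zeta$ at $s=1$, from which $c_\chi=\int_0^\infty|\chi'|^2$ emerges. Concretely the paper writes $\chi(\log d/\log R)=\int d^{-(1+i\xi)/\log R}\varphi(\xi)\,d\xi$ with $\varphi$ the Fourier transform of $e^x\chi(x)$, turning the divisor sum into an integral of $\prod_j \zeta(1+z_j+z_j')/(\zeta(1+z_j)\zeta(1+z_j'))$ against $\prod_j\varphi(\xi_j)\varphi(\xi_j')$; this is exactly your ``contour-integral / Mellin-transform'' step.

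One point needs tightening. Your claim that the local density at $p>w$ factors as $\prod_i\alpha_p(d_i,d_i')$ with ``no cross terms'' is not literally true. Pairwise non-proportionality of the $\psi_i$ only guarantees that when $p$ divides $\mathrm{lcm}(d_i,d_i')$ for two or more indices, the joint density in $\ZZ_p^t$ is at most $p^{-2}$; it need not equal $p^{-|S|}$ (for instance, three forms can be linearly dependent without any two being proportional, and then the density for all three conditions can be $0$ rather than $p^{-3}$). The paper absorbs this by writing the $p$-th Euler factor as $E_p=(1+O(p^{-2}))E_p'$ with $E_p'$ a genuine product over $j$, so that $\prod_{p>w}(1+O(p^{-2}))=1+O(1/w)$ is a harmless multiplicative correction. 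Your proposed control of these corrections ``by $|\mu|\le 1$, $|\chi|\le 1$, and the divisor bound'' is too crude for this step, since the main term in the divisor sum (before the $(\log R)^{2m}$ prefactor) is only of order $(W/\phi(W))^m(\log R)^{-m}$. Once you insert the Fourier/Mellin integral first and track the cross terms multiplicatively through the Euler product, as the paper does, the rest of your plan goes through without change.
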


Now we construct the majorizing measure $\nu$ and show that it satisfies the linear
forms condition.

\begin{proposition}
  \label{prop:nu-lf}
  Fix any smooth function $\chi \colon \RR \to [0,1]$ supported on
  $[-1,1]$ with $\chi(0) = 1$.  Let $k \geq 3$ and $R := N^{k^{-1}
    2^{-k-3}}$. Assume that $w$ grows sufficiently slowly with $N$ and
  let $W := \prod_{p \leq w} p$. Define $\nu \colon \ZZ_N \to
  [0,\infty)$ by
  \begin{equation}\label{eq:construct-nu}
  \nu(n) := \begin{cases}
    \frac{\phi(W)}{W} \frac{\Lambda_{\chi, R}(Wn+1)^2}{c_\chi \log R} &
    \text{when } N/2 \leq n < N, \\
    1 & \text{otherwise.}
  \end{cases}
\end{equation}
Then $\nu$ satisfies the $k$-linear forms condition.
\end{proposition}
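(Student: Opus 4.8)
\emph{Proof plan.} Fix the exponents $n_{j,\omega}\in\{0,1\}$, set $\Psi:=\{(j,\omega):n_{j,\omega}=1\}$ (so $m:=|\Psi|\le k2^{k-1}$), and for $\bx=(x_1^{(0)},x_1^{(1)},\dots,x_k^{(0)},x_k^{(1)})\in\ZZ_N^{2k}$ write $\psi_{j,\omega}(\bx):=\sum_{i\ne j}(j-i)x_i^{(\omega_i)}$; the goal is $\EE_\bx[\prod_{(j,\omega)\in\Psi}\nu(\psi_{j,\omega}(\bx))]=1+o(1)$. The plan is to treat Proposition~\ref{prop:Lambda-lf} as a black box and reach this by soft manipulations. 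I would begin by recording two elementary facts about the forms $\psi_{j,\omega}$: (i) no two of them (for distinct pairs $(j,\omega)$) are rational multiples of one another --- if $j\ne j'$ then $\psi_{j,\omega}$ has zero coefficient on both $x_j^{(0)}$ and $x_j^{(1)}$ while $\psi_{j',\omega'}$ has a nonzero one, and if $j=j'$ but $\omega\ne\omega'$ then for any $i\ne j$ with $\omega_i\ne\omega'_i$ the form $\psi_{j,\omega}$ involves $x_i^{(\omega_i)}$ while $\psi_{j,\omega'}$ does not; and (ii) each $\psi_{j,\omega}$ has a coefficient equal to $\pm1$ (on $x_{j-1}^{(\cdot)}$, or on $x_{j+1}^{(\cdot)}$ when $j=1$), so $\psi_{j,\omega}(\bx)$ equidistributes on $\ZZ_N$.

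Next I would write $\nu=1+g$, where $g:=\nu-1$ vanishes outside $\{N/2\le n<N\}$ and equals $\nu_0-1$ there, with $\nu_0(n):=\frac{\phi(W)}{W}\frac{\Lambda_{\chi,R}(Wn+1)^2}{c_\chi\log R}$. Expanding, $\prod_{(j,\omega)\in\Psi}\nu(\psi_{j,\omega})=\sum_{S\subseteq\Psi}\prod_{(j,\omega)\in S}g(\psi_{j,\omega})$; the $S=\emptyset$ term contributes exactly $1$, so it remains to show $\EE_\bx[\prod_{(j,\omega)\in S}g(\psi_{j,\omega}(\bx))]=o(1)$ for each nonempty $S\subseteq\Psi$. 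Using $g=1_{[N/2,N)}\cdot(\nu_0-1)$ and expanding $\prod_{(j,\omega)\in S}(\nu_0(\psi_{j,\omega})-1)$, one gets
\[
\EE_\bx\Bigl[\prod_{(j,\omega)\in S}g(\psi_{j,\omega}(\bx))\Bigr]=\sum_{S'\subseteq S}(-1)^{|S\setminus S'|}E(S,S'),
\]
where $E(S,S'):=\EE_\bx[\prod_{(j,\omega)\in S}1_{[N/2,N)}(\psi_{j,\omega}(\bx))\prod_{(j,\omega)\in S'}\nu_0(\psi_{j,\omega}(\bx))]$. The crux is the estimate
\[
E(S,S')=c_S+o(1),\qquad\text{where}\quad c_S:=\EE_\bx\Bigl[\prod_{(j,\omega)\in S}1_{[N/2,N)}(\psi_{j,\omega}(\bx))\Bigr]
\]
depends on $S$ but not on $S'$; granting it, $\EE_\bx[\prod_S g(\psi_{j,\omega})]=c_S\sum_{S'\subseteq S}(-1)^{|S\setminus S'|}+o(1)=o(1)$ because the signed sum vanishes for $S\ne\emptyset$ and there are only $O_k(1)$ sets $S'$, and summing over the $O_k(1)$ subsets $S\subseteq\Psi$ yields $1+o(1)$.

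To prove the estimate I would identify $\ZZ_N$ with $\{0,\dots,N-1\}$, so $\psi_{j,\omega}(\bx)$ is an integer in a range $(-C_kN,C_kN)$, and partition $\{0,\dots,N-1\}^{2k}$ by the integer vector $\bigl(\lfloor\psi_{j,\omega}(\bx)/N\rfloor\bigr)_{(j,\omega)\in S}$, which takes $O_k(1)$ values. On each cell, $\psi_{j,\omega}(\bx)\bmod N$ agrees with a fixed affine-linear integer form $\wt\psi_{j,\omega}$ with linear part $\psi_{j,\omega}$, and requiring all the indicators $1_{[N/2,N)}(\psi_{j,\omega}(\bx))$ with $(j,\omega)\in S$ to equal $1$ carves out a sub-polytope with $O_k(1)$ facets, outside which the integrand of $E(S,S')$ is zero; on the union of these polytopes (of total relative measure $c_S$) the integrand is $\prod_{(j,\omega)\in S'}\nu_0(\wt\psi_{j,\omega}(\bx))$. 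Approximate each polytope from inside by disjoint axis-parallel integer boxes of common side $\ell:=\lfloor N^{3/4}\rfloor$, missing only a boundary layer of relative measure $O_k(\ell/N)=O_k(N^{-1/4})$. Since $m\le k2^{k-1}$ and $R=N^{k^{-1}2^{-k-3}}$ one has $R^{10m}\le N^{5/8}<\ell$ and $R=o(N^{1/(10m)})$, so on each box Proposition~\ref{prop:Lambda-lf} applies (with $t=2k$, to the pairwise non-proportional affine forms $\{\wt\psi_{j,\omega}\}_{(j,\omega)\in S'}$ --- its proof is unaffected by an additive integer constant, and its error rate is uniform over the boxes and the $O_k(1)$ cell shapes); the normalizing factors $\frac{\phi(W)}{W}\frac1{c_\chi\log R}$ exactly cancel the main term $\bigl(\frac{Wc_\chi\log R}{\phi(W)}\bigr)^{|S'|}$ of~\eqref{eq:Lambda-lfc}, so $\EE_{\bx\in B}[\prod_{(j,\omega)\in S'}\nu_0(\wt\psi_{j,\omega}(\bx))]=1+o(1)$ on each box $B$, and summing over boxes and cells gives $c_S+o(1)$. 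For the boundary layer, Cauchy--Schwarz bounds its contribution by $O_k(N^{-1/8})\bigl(\EE_\bx[\prod_{(j,\omega)\in S'}\nu_0(\psi_{j,\omega}(\bx))^2]\bigr)^{1/2}$, and since $\nu_0(n)^2\le c_\chi^{-2}(\log R)^2\tau(Wn+1)^4$ by the trivial bound $|\Lambda_{\chi,R}(n)|\le\tau(n)\log R$, this last expectation is $(\log N)^{O_k(1)}$ by H\"older together with standard moment bounds for the divisor function (valid because each $\psi_{j,\omega}(\bx)$ equidistributes over $\ZZ_N$); so the boundary layer contributes $o(1)$.

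The hard part is really Proposition~\ref{prop:Lambda-lf} itself, which is isolated in the next section and used here only as a black box; within the argument above the remaining difficulties are organizational --- checking pairwise non-proportionality of the up to $k2^{k-1}$ forms $\psi_{j,\omega}$; executing the passage from an average over $\ZZ_N^{2k}$ to averages over genuine integer boxes, including the verification that the exponent defining $R$ is small enough for Proposition~\ref{prop:Lambda-lf} to apply on boxes of side a fixed power of $N$; and controlling the thin boundary layers, where the linear forms estimate is unavailable and one must resort to crude divisor-function bounds.
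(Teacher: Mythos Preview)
Your approach is correct and arrives at the same destination as the paper's, but by a longer route. The paper also reduces everything to Proposition~\ref{prop:Lambda-lf} on boxes: it partitions $\ZZ_N^t$ into $Q^t$ boxes of side $\approx N/Q$ (with $Q\to\infty$ slowly), calls a box \emph{good} if for every $j$ the image $\psi_j(B)$ lies entirely inside or entirely outside $[N/2,N)$, and observes that on a good box $\nu(\psi_j(\bx))$ is identically $\nu_0(\psi_j(\bx))$ or identically $1$, so Proposition~\ref{prop:Lambda-lf} gives the box average $1+o(1)$ directly. The key simplification over your argument is in the treatment of the leftover region: rather than Cauchy--Schwarz plus divisor-function moment bounds, the paper simply notes that on a bad box one has $\nu\le 1+\nu_0$ pointwise, expands $\prod_j(1+\nu_0(\psi_j))$, and applies Proposition~\ref{prop:Lambda-lf} to each of the $2^m$ terms to get the bad-box average bounded by $2^m+o(1)=O(1)$; since only an $O(1/Q)$ fraction of boxes are bad, this suffices. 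Your double expansion $\nu=1+g$, $g=1_{[N/2,N)}(\nu_0-1)$ and the alternating-sum cancellation are logically sound but unnecessary once one organizes the argument this way, and the divisor-moment input becomes superfluous.
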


Note that while $\Lambda_{\chi,R}$ is not necessarily nonnegative,
$\nu$ constructed in \eqref{eq:construct-nu} is always nonnegative due to
the square on $\Lambda_{\chi,R}$.

\begin{proof}[Proof of Proposition~\ref{prop:majorize-Lambda} assuming
  Proposition~\ref{prop:nu-lf}]
  Take $\delta_k = k^{-1}2^{-k-4}c_\chi^{-1}$. It suffices to verify
  that for $N$ sufficiently large
  we have $\delta_k \wt \Lambda(n) \leq \nu(n)$ for all $N/2 \leq n
  < N$. We only need to check
  the inequality when $Wn + 1$ is prime, since $\wt \Lambda(n)$ is
  zero otherwise. We have
  \[
  \log R = k^{-1}2^{-k-3} \log N \geq k^{-1}2^{-k-4}\log(WN + 1) =
  c_\chi\delta_k \log(WN + 1),
  \]
  where the inequality holds for sufficiently large $N$ provided
  $w$ grows slowly enough. When $Wn + 1$ is prime, we have $\Lambda_{\chi,R}(Wn + 1) = \log
  R$, so
  \[
  \delta_k \wt \Lambda(n)
  =\delta_k \frac{\phi(W)}{W} \log(Wn + 1)
  \leq \delta_k \frac{\phi(W)}{W} \log(WN + 1)
  \leq \frac{\phi(W)}{W} \frac{\log R}{c_\chi} = \nu(n),
  \]
  as claimed.\end{proof}

\begin{proof}[Proof of Proposition~\ref{prop:nu-lf} assuming
  Proposition~\ref{prop:Lambda-lf}]
  We need to check that
  \begin{equation} \label{eq:verify-nu}
  \EE_{\bx \in
    \ZZ_N^t}[\nu(\psi_1(\bx)) \cdots \nu(\psi_m(\bx))] = 1 + o(1)
\end{equation}
whenever $\psi_1, \dots, \psi_m$, $m \leq k 2^{k-1}$, are the linear
forms that appear in \eqref{eq:Z-lfc} or any subset thereof. Note that no two
$\psi_i$ are multiples of each other.

To use the two-piece definition of $\nu$, we divide the
  domain $\ZZ_N$ into intervals. Let $Q = Q(N)$ be a slowly
  increasing function of $N$. Divide $\ZZ_N$ into $Q$ roughly equal
  intervals and form a partition of $\ZZ_N^t$ into $Q^t$ boxes, as follows:
  \[
  B_{u_1, \dots, u_t} = \prod_{j=1}^t ([u_j N/Q, (u_j + 1)N/Q) \cap
  \ZZ_N) \subseteq \ZZ_N^t, \qquad u_1, \dots, u_t \in \ZZ_Q.
  \]
  Then, up to a $o(1)$ error (due to the fact that the boxes do not
  all have exactly equal sizes), the left-hand side of \eqref{eq:verify-nu} equals
  \[
  \EE_{u_1, \dots,
        u_t \in \ZZ_Q}[\EE_{\bx \in B_{u_1, \dots, u_t}}[\nu(\psi_1(\bx)) \cdots
        \nu(\psi_m(\bx))]].
  \]
  We say that a box $B_{u_1, \dots, u_t}$ is \emph{good} if, for each
  $j \in [m]$, the set $\{\psi_j(\bx) : \bx \in B_{u_1, \dots, u_t}\}$
  either lies completely in the subset $[N/2, N)$ of
  $\ZZ_N$ or completely outside this subset. Otherwise, we say that the box
  is \emph{bad}. We may assume $Q$ grows slowly enough
  that $N/Q \ge R^{10m}$. From Proposition~\ref{prop:Lambda-lf} and
  the definition of $\nu$, we know that for good boxes,
    \[
    \EE_{\bx \in B_{u_1, \dots, u_t}}[\nu(\psi_1(\bx)) \cdots
        \nu(\psi_m(\bx))] = 1+ o(1).
    \]
    For bad boxes, we use the bound $\nu(n) \leq
    1 + \frac{\phi(W)}{W} \frac{\Lambda_{\chi, R}(Wn+1)^2}{c_\chi
      \log R}$. By expanding and applying
    \eqref{eq:Lambda-lfc} to each term, we find that
    \[
    \EE_{\bx \in B_{u_1, \dots, u_t}}[\nu(\psi_1(\bx)) \cdots
        \nu(\psi_m(\bx))] = O(1)
    \]
    (it is bounded in absolute value by $2^m + o(1)$). It remains to
    show that the proportion of boxes that are bad is $o(1)$.

    Suppose $B_{u_1, \dots, u_t}$ is bad. Then there exists some $i$
    such that the image of the box under $\psi_i$ intersects both $[N/2,
    N)$ and its complement. This implies that there exists some
    (real-valued) $\bx \in \prod_{j=1}^t [u_j N / Q, (u_j+1)N /Q)
    \subseteq (\RR/N\ZZ)^t$ with $\psi_i(\bx) = 0$ or $N/2$ (mod $N$).
    Letting $\by = Q \bx / N$, we see that $\by \in \prod_{j=1}^t [u_j,
    u_j+1) \subseteq (\RR/Q\ZZ)^t$ satisfies $\psi_i(\by) = 0$ or
    $Q/2$ (mod $Q$).  This implies that $\psi_i(u_1, \dots, u_t)$ is
    either $O(1)$ or $Q/2 + O(1)$ (mod $Q$).  Since $\psi_i$ is a
    nonzero linear form, at most a $O(1/Q)$ fraction of the tuples
    $(u_1, \dots, u_t) \in \ZZ_Q^t$ have this property. This can be
    seen by noting that if we fix all but one of the coordinates, there will
    be $O(1)$ choices for the final coordinate for which $\psi_i$ is in the required range.
    Taking the union over all $i$, we see that the proportion of bad boxes is
    $O(1/Q) = o(1)$.
\end{proof}

\section{Verifying the linear forms condition} \label{sec:verify-lfc}

In this section, we prove Proposition~\ref{prop:Lambda-lf}. There are
numerous estimates along the way. To avoid getting bogged
down with the rather technical error bounds, we first go through
the proof while skipping some of these details (i.e., by only
considering the ``main term''). The approximations
are then justified at the end, where we collect the error bound arguments.
We note that all constants will depend implicitly on $\chi, m, t, \psi_1, \dots, \psi_m$.

Expanding the definition of $\Lambda_{\chi,R}$, we rewrite the left-hand side
of~\eqref{eq:Lambda-lfc} as
\begin{equation} \label{eq:expand-Lambda}
(\log R)^{2m} \sum_{d_1,d'_1, \dots, d_m, d'_m
    \in \NN} \left(\prod_{j=1}^m \mu(d_j)
  \chi\paren{\frac{\log d_j}{\log R}}
  \mu(d'_j) \chi\paren{\frac{\log d'_j}{\log R}}
  \right) \EE_{\bx \in B}[1_{d_j,d'_j
      \mid \theta_j(\bx) \ \forall j}].
\end{equation}
Since $\mu(d) = 0$ unless $d$ is square-free, we only need to consider
square-free $d_1,d'_1, \dots,d_m,d'_m$. Also, since $\chi$ is supported on $[-1,1]$, we may assume that $d_1, d'_1, \dots, d_m, d'_m \leq R$. Let
$D$ denote the lcm of $d_1, d'_1, \dots, d_m, d'_m$. The width of the box $B$ is at least $R^{10m}$ in each
dimension, so, by considering a slightly smaller box $B' \subseteq B$ such
that each dimension of $B'$ is divisible by $D \leq R^{2m}$, we obtain
\[
\EE_{\bx \in B}[1_{d_j,d'_j
      \mid \theta_j(\bx) \, \forall j}]
 =
\EE_{\bx \in \ZZ_D^t}[1_{d_j,d'_j
      \mid \theta_j(\bx) \, \forall j}]
+ O(R^{-8m}).
\]
Therefore, as there are at most $R^{2m}$ choices for $d_1, d'_1, \dots, d_m, d'_m$, we see that, up to an additive error of $O(R^{-6m}\log^{2m} R)$, we may approximate \eqref{eq:expand-Lambda} by
\begin{equation} \label{eq:pre-Fourier}
(\log R)^{2m} \sum_{d_1,d'_1, \dots, d_m, d'_m
    \in \NN} \left(\prod_{j=1}^m \mu(d_j)
  \chi\paren{\frac{\log d_j}{\log R}}
  \mu(d'_j) \chi\paren{\frac{\log d'_j}{\log R}}
  \right) \EE_{\bx \in \ZZ_D^t}[1_{d_j,d'_j
      \mid \theta_j(\bx) \ \forall j}] .
\end{equation}

Let $\varphi$ be the Fourier transform of $e^x\chi(x)$. That is,
\[
e^x \chi(x) = \int_\RR \varphi(\xi)e^{-ix\xi} \,d \xi.
\]
Substituting and simplifying, we have
\[
\chi\paren{\frac{\log d}{\log R}} = \int_\RR d^{-\frac{1
    + i\xi}{\log R}} \varphi(\xi) \, d\xi.
\]
We wish to plug this integral into \eqref{eq:pre-Fourier}. It helps to
first restrict the integral to a compact interval $I = [-\log^{1/2} R,
\log^{1/2} R]$. By basic results in Fourier analysis (see, for example,~\cite[Chapter 5, Theorem 1.3]{SSt03}), since $\chi$ is smooth and
compactly supported, $\varphi$ decays rapidly, that is, $\varphi(\xi) = O_A ((1
+ \sabs{\xi})^{-A})$ for any $A > 0$. It follows that for any $A > 0$,
\begin{equation}\label{eq:chi-int-trunc}
\chi\paren{\frac{\log d}{\log R}}
=
\int_I d^{-\frac{1
    + i\xi}{\log R}} \varphi(\xi) \, d\xi + O_A(d^{-1/\log R}
(\log R)^{-A}).
\end{equation}
We write
\[
z_j := \frac{1 + i\xi_j}{\log R}
\qquad\text{and}\qquad
z'_j := \frac{1 + i\xi'_j}{\log R}.
\]
We have $\chi(\log d / \log R) = O(d^{-1/\log R})$ (we only need to
check this for $d \leq R$ since $\chi$ is supported on
$[-1,1]$). Using \eqref{eq:chi-int-trunc}, we have
\begin{multline}
  \label{eq:int-trunc-prod}
\prod_{j=1}^m \chi\paren{\frac{\log d_j}{\log R}}\chi\paren{\frac{\log
    d'_j}{\log R}} =
\int_I \cdots \int_I \prod_{j=1}^m d_j^{-z_j} {d'_j}^{-z'_j}
\varphi(\xi_j) \varphi(\xi'_j) \, d\xi_j d\xi'_j \\ +   O_A\paren{ (\log R)^{-A} \prod_{j=1}^m (d_jd'_j)^{-1/\log R}}.
\end{multline}
Using \eqref{eq:int-trunc-prod}, we estimate \eqref{eq:pre-Fourier}  (error bounds are deferred
to the end) by
\begin{equation} \label{eq:post-swap}
(\log R)^{2m} \int_I \cdots \int_I
\sum_{d_1, d'_1, \dots, d_m, d'_m \in \NN}
\EE_{\bx \in \ZZ_D^t}[1_{d_j,d'_j
      \mid \theta_j(\bx) \ \forall j}]
\prod_{j=1}^m \mu(d_j) d_j^{-z_j} \mu(d'_j) {d'_j}^{-z'_j}
\varphi(\xi_j)\varphi(\xi'_j) \,d\xi_j d\xi'_j.
\end{equation}
We are allowed to swap the summation and the integrals because $I$ is
compact and the sum can be shown to be absolutely convergent
(the argument for absolute convergence is similar to the error bound
for \eqref{eq:post-swap} included towards the end of the section). Splitting $d_1,
d'_1, \dots, d_m, d'_m$ in \eqref{eq:post-swap} into prime
factors, we obtain
\begin{equation} \label{eq:euler-product}
  \eqref{eq:post-swap} = (\log
  R)^{2m} \int_I \cdots \int_I \prod_p E_p(\xi) \cdot \prod_{j=1}^m
  \varphi(\xi_j)\varphi(\xi'_j) \, d\xi_j d\xi'_j,
\end{equation}
where $\xi = (\xi_1, \xi'_1, \dots, \xi_m, \xi'_m) \in I^{2m}$ and
$E_p(\xi)$ is the Euler factor
\[
E_p(\xi) := \sum_{d_1, d'_1, \dots, d_m, d'_m \in \{1,p\}} \EE_{\bx \in \ZZ_p^t}[1_{d_j,d'_j
      \mid \theta_j(\bx) \ \forall j}]
\prod_{j=1}^m \mu(d_j) d_j^{-z_j} \mu(d'_j) {d'_j}^{-z'_j}.
\]
We have $E_p(\xi) = 1$ when $p \leq w$ (recall the $W$-trick, so $p
\nmid \theta_j(\bx) = W \psi_j(\bx) + 1$ for all $j$ when $p \leq
w$). When $p > w$, the expectation in the summand equals $1$ if all
$d_j, d'_j$ are $1$, $1/p$ if $d_j d'_j = 1$ for all
except exactly one $j$, and is at most $1/p^2$ otherwise (here we assume
that $w$ is sufficiently large so that no two $\psi_i$ are
multiples of each other mod $p$). It follows that
for $p > w$,
\[
E_p(\xi) = 1 - p^{-1} \sum_{j=1}^m(p^{-z_j} + p^{-z'_j} - p^{-z_j -
  z'_j}) + O(p^{-2})
         = (1 + O(p^{-2})) E'_p(\xi),
\]
where, for any prime $p$,
\[
E'_p(\xi) := \prod_{j=1}^m \frac{(1 - p^{-1-z_j})(1- p^{-1-z'_j})}{1 - p^{-1-z_j-z'_j}}.
\]
It then follows that
\begin{align} \label{eq:euler-compare}
  \prod_p E_p(\xi) = \prod_{p > w}(1 + O(p^{-2})) E'_p(\xi) = (1 +
  O(w^{-1})) \paren{\prod_{p\leq w} E'_p(\xi)}^{-1} \prod_p E'_p(\xi).
\end{align}
Recall that the Riemann zeta function
\[
\zeta(s) := \sum_{n \geq 1} n^{-s} = \prod_p (1 - p^{-s})^{-1}
\]
has a simple pole at $s=1$ with
residue $1$ (a proof is included towards the end). This implies that
\begin{equation} \label{E'_p-zeta}
  \prod_p E'_p(\xi) = \prod_{j=1}^m \frac{\zeta(1 + z_j +
    z'_j)}{\zeta(1 + z_j)\zeta(1 + z'_j)}
  \approx \prod_{j=1}^m \frac{z_j z'_j}{z_j + z'_j},
\end{equation}
where $\approx$ denotes asymptotic equality.
Here we use $\abs{z_1}, \abs{z'_1}, \dots, |z_m|, |z'_m| = O((\log R)^{-1/2})$ as
$\xi_1, \xi'_1, \dots \xi_m, \xi'_m \in I$. For $p \leq w$, we make the
approximation $E'_p(\xi) \approx (1 - p^{-1})^m$. Hence,
\begin{equation}\label{eq:E'_p-phi}
\prod_{p \leq w} E'_p(\xi) \approx \prod_{p \leq w} (1 - p^{-1})^m = \paren{\frac{\phi(W)}{W}}^m.
\end{equation}
Substituting \eqref{eq:euler-compare}, \eqref{E'_p-zeta}, and
\eqref{eq:E'_p-phi} into \eqref{eq:euler-product}, we find that
\begin{equation}
  \label{eq:pre-c_chi-eval}
  \eqref{eq:euler-product}
  \approx
  (\log R)^{2m} \paren{\frac{W}{\phi(W)}}^m
  \int_I \cdots \int_I
  \prod_{j=1}^m \frac{z_j z'_j}{z_j + z'_j}
  \varphi(\xi_j)\varphi(\xi'_j) \,d\xi_j d\xi'_j.
\end{equation}

It remains to estimate the integral
\[
\int_I \int_I \frac{z_j z'_j}{z_j + z'_j}
\varphi(\xi_j)\varphi(\xi'_j) \,d\xi_j d\xi'_j
=
\frac{1}{\log R} \int_I \int_I
\frac{(1 + i\xi_j)(1 + i\xi'_j)}{2 + i(\xi_j + \xi'_j)}
\varphi(\xi_j)\varphi(\xi'_j) \,d\xi_j d\xi'_j.
\]
We can replace the domain of integration $I = [-\log^{1/2} R,
\log^{1/2} R]$ by $\RR$ with a loss
of $O_A(\log^{-A} R)$ for any $A > 0$ due to the rapid decay
of $\varphi$ given by $\varphi(\xi) = O_A((1+\sabs{\xi})^{-A})$.
We claim
\begin{equation} \label{eq:c_chi-eval}
\int_\RR \int_\RR  \frac{(1 + i \xi)(1 + i \xi')}{2 + i(\xi + \xi')}
\varphi(\xi) \varphi(\xi') \, d\xi d\xi'
= \int_0^\infty \sabs{\chi'(x)}^2 \, dx
= c_\chi.
\end{equation}
Using
\[
\frac{1}{2 + i(\xi + \xi')} = \int_0^\infty e^{-(1+i\xi)x} e^{-(1 +
  i\xi')x} \, dx,
\]
we can rewrite the left-hand side of \eqref{eq:c_chi-eval} as
\[
\int_0^\infty\paren{\int_\RR \varphi(\xi) (1 + i\xi)e^{-(1+i\xi) x}
  \,d\xi}^2 \,dx.
\]
The expression in parentheses is $-\chi'(x)$, so \eqref{eq:c_chi-eval}
follows. Substituting \eqref{eq:c_chi-eval} into \eqref{eq:pre-c_chi-eval} we
arrive at the desired conclusion, Proposition~\ref{prop:Lambda-lf}.

\subsection*{Error estimates}

Now we bound the error terms in the above analysis.

\textit{Simple pole of Riemann zeta function.} Here is the argument
showing that $\zeta(s) = (s-1)^{-1} + O(1)$ whenever $\Re s > 1$ and
$s - 1 = O(1)$. We have $(s-1)^{-1} = \int_1^\infty x^{-s} \, dx$. So
\[
\zeta(s) - \frac{1}{s-1} = \sum_{n=1}^\infty n^{-s} - \int_1^\infty
x^{-s} \, dx = \sum_{n = 1}^\infty
  \int_{n}^{n+1} (n^{-s} - x^{-s}) \, dx.
\]
The $n$-th term on the right is bounded in magnitude by $O(n^{-2})$. So the sum is $O(1)$.

\medskip

\textit{Estimate \eqref{eq:post-swap}.} We want to bound the
difference between \eqref{eq:post-swap} and
\eqref{eq:pre-Fourier}. This means bounding the contribution to~\eqref{eq:pre-Fourier} from the
error term in
\eqref{eq:int-trunc-prod}. Taking absolute values everywhere, we bound
these contributions by
\begin{align*}
  &\ll_A (\log R)^{O(1) - A} \sum_{\substack{d_1, d'_1, \dots, d_m, d'_m
    \\ \text{sq-free integers}}}  \EE_{\bx \in \ZZ_D^t}[1_{d_j,d'_j
      \mid \theta_j(\bx) \ \forall j}] (d_1d'_1\cdots d_md'_m)^{-1/\log R}
  \\
  &= (\log R)^{O(1) - A} \prod_p \sum_{d_1, d'_1, \dots,
      d_m, d'_m \in \{1,p\}} \EE_{\bx \in \ZZ_p^t} [1_{d_j,d'_j
      \mid \theta_j(\bx) \ \forall j}]  (d_1d'_1\cdots d_md'_m)^{-1/\log R}.
\end{align*}
The expectation $\EE_{\bx \in \ZZ_p^t}[1_{d_j,d'_j \mid \theta_j(\bx) \ \forall
    j}] $ is $1$ if all $d_i$ and $d'_i$ are 1 and at most $1/p$
otherwise. We continue to bound the above by
\begin{align*}
  &\leq (\log R)^{O(1) - A} \prod_p \paren{1 + p^{-1}
    \sum_{\substack{d_1, d'_1, \dots, d_m, d'_m \in \{1,p\} \\
        \text{not all 1's}}} (d_1d'_1\cdots d_md'_m)^{-1/\log R}}
    \nonumber
    \\
    &= (\log R)^{O(1) - A} \prod_p \paren{1 + p^{-1}((p^{-1/\log R}
      +1)^{2m} - 1)} \nonumber
    \\
    &\leq (\log R)^{O(1) - A} \prod_p \paren{1 - p^{-1- 1/\log
        R}}^{-O(1)} \nonumber
    \\
    &= (\log R)^{O(1) - A} \zeta(1 + 1/\log
    R)^{O(1)}. \label{eq:truncate-error}
\end{align*}
So the difference between \eqref{eq:post-swap} and
\eqref{eq:pre-Fourier} is $O_A((\log R)^{O(1) - A})$, which
is small as long as we take $A$ to be sufficiently large.

\medskip

\textit{Estimate in \eqref{E'_p-zeta}.}
We have $\sabs{z_j}, \sabs{z'_j} = O(\log^{-1/2} R)$ since
$\sabs{\xi_j}, \sabs{\xi'_j} \leq \log^{1/2} R$. So
\begin{equation} \label{eq:E'_p-prod-est}
 \prod_{j=1}^m \frac{\zeta(1 + z_j +
    z'_j)}{\zeta(1 + z_j)\zeta(1 + z'_j)}
  = \prod_{j=1}^m \frac{((z_j + z'_j)^{-1} + O(1))}{(z_j^{-1} +
    O(1))({z'_j}^{-1} + O(1))} \\
  = (1 + O(\log^{-1/2} R))\prod_{j=1}^m \frac{z_j z'_j}{z_j + z'_j}.
\end{equation}

\medskip

\textit{Estimate in \eqref{eq:E'_p-phi}.}
If $\abs{z}\log p = O(1)$ (which is the case for $p \leq w$), then
\[
1 - p^{-1-z} = 1-p^{-1} e^{-z\log p} = 1 - p^{-1}(1 + O(\sabs{z} \log
p))  = (1 - p^{-1})(1 + O(\sabs{z} p^{-1} \log p)).
\]
It follows that for all $p \leq w$ and $\xi_1, \xi'_1, \dots, \xi_m, \xi'_m \in I$, we
have
\[
E'_p(\xi) = \paren{1 + O\paren{\frac{\log p}{p \log^{1/2}
      R}}}(1 - p^{-1})^m
\]
and, hence,
\begin{equation}\label{eq:E'_p-small}
\prod_{p \leq w} E'_p(\xi) = \paren{1 + O\paren{\frac{w}{\log^{1/2} R}}} \prod_{p \leq w} (1 - p^{-1})^m .
\end{equation}

\medskip

\textit{Estimate in \eqref{eq:pre-c_chi-eval}.} Using \eqref{eq:euler-compare},
\eqref{eq:E'_p-prod-est}, and \eqref{eq:E'_p-small}, we find that the
ratio between the two sides in \eqref{eq:pre-c_chi-eval} is
$1 + O(1/w + w/\log^{1/2} R) = 1 + o(1)$, as long as $w$ grows sufficiently slowly.

\section{Extensions of the Green-Tao theorem} \label{sec:conclusion}

We conclude by discussing a few extensions of the Green-Tao theorem.

\subsection*{Szemer\'edi's theorem in the primes}

As noted already by Green and Tao \cite{GT08}, their method also implies a Szemer\'edi-type theorem for the primes. That is, every subset of the primes with positive relative upper density contains arbitrarily long arithmetic progressions.

One elegant corollary of this result is that there are arbitrarily long APs where
every term is a sum of two squares. This result follows from a combination of the
well-known fact that every prime of the form $4n+1$ is a sum of two
squares with Dirichlet's theorem on primes in arithmetic progressions, which tells us that roughly half the primes are
congruent to $1$ (mod $4$). Even this innocent-sounding corollary was open before Green and Tao's paper.

\subsection*{Gaussian primes contain arbitrarily shaped constellations}

The Gaussian integers is the set of all numbers of the form $a + bi$, where $a, b \in \ZZ$. This set is a ring under the usual definitions of addition and multiplication for complex numbers. It is also a unique factorization domain, so it is legitimate to talk about the set of Gaussian primes. Tao~\cite{Tao06jam} proved that an analogue of the Green-Tao theorem holds for the Gaussian primes.

%It is natural to ask whether any analogue of the Green-Tao theorem holds for this set and a result of Tao~\cite{Tao06jam} shows that this is indeed the case.

We say that $A \subseteq \ZZ^d$ \emph{contains arbitrary constellations} if, for every finite set $F \subseteq \ZZ^d$, there
exist $x \in \ZZ^d$ and $t \in \ZZ_{>0}$ such that $x + tf
\in A$ for every $f \in F$. Tao's theorem then states that the Gaussian primes, viewed as a subset of $\ZZ^2$, contain arbitrary constellations. Just as the Green-Tao theorem uses Szemer\'edi's theorem as a black box, this theorem uses the multidimensional analogue of Szemer\'edi's theorem, first proved by Furstenberg and Katznelson~\cite{FK78}. This states that every subset of $\ZZ^d$ with positive upper density\footnote{A set $A \subseteq \ZZ^d$ is said to have positive
  upper density if $\limsup_{N \to
    \infty} \abs{A \cap [-N,N]^d}/{(2N+1)^d} > 0$. We say that $A
  \subseteq S \subseteq \ZZ^d$ has
  positive relative upper density if $\limsup_{N \to
    \infty} \abs{A \cap S \cap [-N,N]^d}/\abs{S \cap [-N,N]^d} > 0$.} contains arbitrary constellations.
The Furstenberg-Katznelson theorem also follows from the hypergraph removal lemma and the approach taken by
Tao is to transfer this hypergraph removal proof to the sparse context. It may therefore be
seen as a precursor to the approach taken here.

\subsection*{Multidimensional Szemer\'edi theorem in the primes}

Let $P$ denote the set of primes in $\ZZ$. It was shown recently by
Tao and Ziegler \cite{TZ13ar} and, independently, by Cook, Magyar, and
Titichetrakun \cite{CMT13ar}, that every subset of $P^d$ of positive
relative upper density contains arbitrary constellations. A
short proof was subsequently given in \cite{FZ13ar} (though, like
\cite{TZ13ar}, it assumes some difficult results of Green, Tao, and Ziegler that
we will discuss later in this section).

Although both this result and Tao's result on the Gaussian primes are multidimensional
analogues of the Green-Tao theorem, they are quite different in
nature. Informally speaking, a key difficulty in the second result is that there is
a strong correlation between coordinates in $P^d$ (namely, that all coordinates
are simultaneously prime), whereas there is no significant correlation
between the real and imaginary parts of a typical Gaussian prime
(after applying an extension of the $W$-trick).

\subsection*{The primes contain arbitrary polynomial
progressions}

We say that $A \subseteq \ZZ$ \emph{contains arbitrary polynomial
  progressions} if, whenever $P_1, \dots, P_k \in \ZZ[X]$ are
polynomials in one variable with integer coefficients satisfying
$P_1(0) = \cdots = P_k(0) = 0$, there is some $x \in \ZZ$ and $t \in
\ZZ_{> 0}$ such that $x + P_j(t) \in A$ for each $j = 1, \dots, k$.
A striking generalization of Szemer\'edi's theorem due to Bergelson
and Leibman~\cite{BL96} states that any subset of $\ZZ$ of positive
upper density contains arbitrary polynomial progressions. To date, the only
known proofs of this result use ergodic theory.

For primes, an analogue of the Bergelson-Leibman theorem was proved
by Tao and Ziegler~\cite{TZ08}. This result states that any subset of the primes with
positive relative upper density contains arbitrary polynomial
progressions. In particular, the primes themselves contain arbitrary polynomial
progressions. It seems plausible that the simplifications outlined here could also
be used to simplify the proof of this theorem.

\subsection*{The number of $k$-APs in the primes}

The original approach of Green and Tao (and the approach outlined in this paper) implies that for any $k$
the number of $k$-APs of primes with each term at most $N$ is on the order of $\frac{N^2}{\log^k N}$. In subsequent work, Green, Tao, and Ziegler \cite{GT10,GT12aofm,GTZ12aofm} showed how to determine the exact asymptotic. That is, they determine a constant $c_k$ such that the number of $k$-APs of primes with each term at most $N$ is
$(c_k + o(1)) \frac{N^2}{\log^k N}$. More generally, they determine an asymptotic for the number of prime solutions to a broad range of linear systems of equations.

The proof of these results also draws on the transference technique discussed in this paper but a number of additional ingredients are needed, most notably an inverse theorem describing the structure of those sets which do not contain the expected number of solutions to certain linear systems of equations. It is this result which is transferred to the sparse setting when one wishes to determine the exact asymptotic.

\bigskip

\noindent\textbf{Acknowledgments.} We thank Yuval Filmus, Mohammad
Bavarian, and the anonymous referee for helpful comments on the manuscript.

%\bibliographystyle{amsplain_mod}
%\bibliography{ref-gtexpo}

\end{document}